\documentclass[a4paper]{amsart}
\usepackage{amsmath,amssymb,amsthm}
\usepackage{color}
\usepackage[dvipsnames]{xcolor}
\usepackage{graphicx}
\usepackage{fullpage}
\usepackage{setspace}
\usepackage{tikz}
\usepackage{tikz-cd}
\usepackage[normalem]{ulem}
\usepackage{soul}
\usepackage{hyperref}

\numberwithin{equation}{section}
\newtheorem{theorem}{Theorem}[section]
\newtheorem{lemma}[theorem]{Lemma}

\theoremstyle{definition}
\newtheorem{definition}[theorem]{Definition}
\newtheorem{example}[theorem]{Example}

\newcommand{\cB}{\mathcal{B}}
\newcommand{\cP}{\ensuremath{\mathcal{P}}}

\newcommand{\dgm}{\mathrm{dgm}}
\newcommand{\Hom}{\mathrm{Hom}}
\newcommand{\interval}[1]{\lceil #1 \rfloor}
\newcommand{\barcode}{\mathrm{bar}}
\newcommand{\im}{\mathrm{im}\,}
\newcommand{\coker}{\mathrm{coker}\,}

\title{Matrix Algebra for Persistence Modules Yields a Proof of the Isometry Theorem}
\author{Michael Moy}
\date{}

\begin{document}

\begin{abstract}

The main purpose of this paper is to provide a simple proof of the isometry theorem for one-parameter persistence modules, in which a matrix representing one morphism of an interleaving is reduced and the pivots determine a matching between barcodes.
This approach applies to persistence modules of finite type indexed by the reals, and the more general statement for q-tame modules can then be deduced from it using approximations of q-tame modules.
The similarity between this use of matrix reduction and that in the persistent homology algorithm motivates some further development of matrix computations for persistence modules, formalized by a category of barcodes in which the morphisms are equivalence classes of matrices.
A method for computing induced maps on persistent homology is provided using matrix operations that fit naturally into persistent homology computations, making functorial persistent homology barcodes computable.
A method is also given for computing persistent homology barcodes and morphisms when chains do not necessarily have infinite death times.
\end{abstract}

\maketitle

\section{Introduction}\label{section:introduction}

In the study of one-parameter persistence, the isometry theorem states that the interleaving distance between q-tame persistence modules indexed by the reals is equal to the bottleneck distance between their persistence diagrams.
The theorem is typically split into two inequalities.
One states that the bottleneck distance is less than or equal to the interleaving distance: this is known as the algebraic stability theorem, as it is an algebraic generalization of an early version of the stability of persistent homology~\cite{cohen2007stability}.
The original proof of the algebraic stability theorem was given in~\cite{chazal_et_al_proximity_of_persistence_modules}, and a related approach is presented in a comprehensive study of persistence modules in~\cite{chazal2016structure}.
Algebraic stability has since served as a foundation on which other stability results can be based, including as a step in other versions of the stability of persistent homology~\cite{chazal2014persistence}.
The reverse inequality, sometimes known as converse algebraic stability, was then proved and subsequently simplified~\cite{lesnick2011optimality,Lesnick_2015_interleaving,chazal2016structure,bubenik2014categorification}, thus establishing the isometry theorem.
Contrary to the order in which these were originally proved, converse algebraic stability is now recognized as the easier direction, so most of the work required to establish the isometry theorem lies in proving the algebraic stability theorem.
Alternate proofs of the algebraic stability theorem with contrasting approaches have been given, including in~\cite{Bauer_Lesnick_2015, bjerkevik_stability}.
A more thorough discussion of the history and various approaches to the isometry theorem are given in~\cite{Bauer_Lesnick_2015}, although this predates~\cite{bjerkevik_stability}.

This paper presents a proof of the isometry theorem that takes a matrix-based approach to persistence modules of finite type, then generalizes to q-tame modules.
The method for modules of finite type, presented in Section~\ref{section: isometry theorem finite type}, is particularly simple once the right algebraic setup has been made: after reducing a matrix representing one morphism of an $\varepsilon$-interleaving, the pivots determine an $\varepsilon$-matching.
The matrix-based approach places the proof of the isometry theorem in the same language as the persistent homology algorithm, and I believe this unifying language will have a pedagogical benefit, allowing the persistent homology algorithm and stability to be more easily learned side by side.
This approach has been partly inspired by other proofs of the isometry theorem, and their influence should be evident to those familiar with them.
The setup of the proof is related to the method of~\cite{bjerkevik_stability}, which uses ranks of matrices to bound numbers of matchable bars, then proves the existence of a matching by a combinatorial argument.
The use of the underlying algebraic structure of persistence modules and duality is similar to the method of~\cite{Bauer_Lesnick_2015}, and the resulting matchings, though not identical, are similar in that a matching is constructed from one morphism of an interleaving.

After proving the isometry theorem for modules of finite type, the generalization to q-tame modules is given in Sections~\ref{section: isometry theorem locally finite} and~\ref{section: isometry theorem q-tame}.
These sections follow a natural progression, first extending to locally finite modules by approximating them on bounded intervals, and then to q-tame modules, which are approximated in both the interleaving and bottleneck distances by locally finite modules.
I previously wrote about these methods in~\cite{moy2024persistence}, and these sections are essentially independent of the rest of the paper.

The use of matrices in the proof of the isometry theorem and the similarity with the persistent homology algorithm has motivated some additional algebraic work in this paper.
The algebraic approach will seem familiar to those used to viewing persistence modules indexed by $\mathbb{N}$ as graded modules over $F[t]$ (as in~\cite{zomorodian_carlsson_computing}, for instance).
With a slight generalization, matrices can be used to represent morphisms of appropriate classes of persistence modules indexed by any totally ordered set $R$.
The approach used here does not require any notion of an underlying ring for the module and instead relies only on matrices of field elements and the associated birth and death times of bars.
Categorically speaking, this means that the equivalence of categories of persistence modules of finite type indexed by $\mathbb{N}$ and finitely generated graded modules over $F[t]$ can be generalized to a variant designed around one-parameter persistence: the category of persistence modules of finite type indexed by $R$ is equivalent to a category in which objects are barcodes over $R$ and morphisms are certain equivalence classes of matrices.
In addition to the computational convenience of matrices, persistent homology barcodes are functorial in this context, and I hope this perspective will be explored further.

The algebraic approach here is therefore not entirely novel, but as it is also not commonplace, self-contained derivations of the matrix techniques are given.
These can be compared and contrasted with computational techniques for graded modules, described in the context of persistence in~\cite{skraba2013persistence}, for instance.
The definitions, first properties, and general computational approach are given in Sections~\ref{section:matrices represent morphisms} and~\ref{section:matrix operations for persistence modules}.
In order to expedite the proof of the isometry theorem, which only requires these basic techniques, further properties are deferred until after the sections on the isometry theorem.
Section~\ref{section: a category of barcodes} gives the categorical formalization outlined above, and Section~\ref{section: computations} gives matrix-based methods to compute the image, kernel, and cokernel of a morphism of persistence modules. 
The computations are more involved than those for vector spaces, but they ultimately consist of row and column operations.

To demonstrate the matrix techniques and explain the similarities between the isometry theorem and the persistent homology algorithm, Section~\ref{section: functorial PH algorithm} focuses on matrix methods for computing persistent homology.
A method for computing induced morphisms of persistent homology modules is given, making the ``functorial barcode'' perspective of Section~\ref{section: a category of barcodes} computational, and the kernel and cokernel computations of Section~\ref{section: computations} are applied to compute persistent homology from chain complexes in which chains do not necessarily have infinite death times.
This is related to some previous work, although no detailed comparison will be made here.
For instance, methods for computing persistent homology in the same setting of general death times have previously been given in~\cite{skraba2013persistence} and ~\cite{dey2024efficient} using presentations of graded modules; the underlying computations are related to those here, although they are presented in different language.
An algorithm for filtrations of simplicial complexes that are not necessarily increasing has also been given in~\cite{dey2014computing}.
Further work will hopefully be done on efficient implementations of the methods presented here for practical use in code for computing persistent homology.

\section{Background}\label{section:background}

Here we quickly review some definitions used in the study of one-parameter persistence modules.
Readers familiar with persistence can likely skip this section and refer to it as needed.
More detailed introductions can be found in~\cite{chazal2016structure,oudot_persistence}, for instance.

\subsection{Persistence modules}\label{subsection:background, persistence modules}

We fix a field $F$ throughout and work with vector spaces over $F$.
A \emph{persistence module} $V$ indexed by a totally ordered set $R$ consists of a vector space $V_t$ for each $t \in R$ and a linear map $V_{s \leq t} \colon V_s \to V_t$ for each pair of elements $s \leq t$, such that $V_{r \leq t} = V_{s \leq t} \circ V_{r \leq s}$ whenever $r \leq s \leq t$.
The maps $V_{s \leq t}$ are called the \emph{structure maps} of $V$.
A motivating special case uses $R = \mathbb{R}$, with the usual order, but much of the theory applies in general.
A \emph{morphism} $\varphi \colon U \to V$ of persistence modules, both indexed by $R$, consists of a linear map $\varphi_t$ for each $t \in R$, such that for any $s \leq t$, the following diagram commutes.
\[
\begin{tikzcd}
U_s \arrow[r, "U_{s \leq t}"] \arrow[d, "\varphi_s"'] & U_t \arrow[d, "\varphi_t"] \\
V_s \arrow[r, "V_{s \leq t}"']                        & V_t                       
\end{tikzcd}
\]
In short, the category of persistence modules indexed by $R$ is the functor category $\mathrm{Vect}^R$.
Zero modules, direct sums, quotients, and the images, kernels, and cokernels of morphisms are all computed pointwise. 
For instance, with $\varphi$ as above, $(\ker \varphi)_t = \ker \varphi_t$ and $(\ker \varphi)_{s \leq t}$ is the restriction of $U_{s \leq t}$.

Given an interval $L$ in $R$, the \emph{interval module} $\mathbb{I}(L)$ is a persistence module defined by letting
\[
\mathbb{I}(L)_t = 
\begin{cases}
    F & \text{ if $t \in L$}\\
    0 & \text{ if $t \notin L$}
\end{cases}
\]
and letting $\mathbb{I}(L)_{s \leq t}$ be the identity map if $s,t \in L$ and the zero map otherwise.
It can be shown that an interval module is indecomposable, i.e., it cannot be expressed as a direct sum of two nonzero modules.
On the other hand, many persistence modules can be expressed as direct sums of interval modules (Theorem~\ref{theorem: pfd implies interval decomposable}, below). 
A persistence module $V$ is called \emph{interval decomposable} if it is isomorphic to a direct sum of interval modules: $V \cong \bigoplus_{i \in I} \mathbb{I}(L_i)$.

\subsection{Finiteness conditions}\label{subsection:background, finiteness conditions}

Various finiteness conditions are used in the study of persistence modules.
A persistence module $V$ indexed by $R$ is called \emph{pointwise finite dimensional} if $V_t$ is finite dimensional for every $t \in R$.
A persistence module $V$ is called \emph{q-tame} if $V_{s \leq t}$ has finite rank whenever $s < t$ (this condition is primarily applied to persistence modules indexed by $\mathbb{R}$).
An interval-decomposable persistence module is of \emph{finite type} if it is isomorphic to a finite direct sum of interval modules.
An interval-decomposable persistence module $V \cong \bigoplus_{i \in I} \mathbb{I}(L_i)$ indexed by $\mathbb{R}$ is \emph{locally finite} if for each $t \in \mathbb{R}$, there is a neighborhood of $t$ that intersects finitely many of the intervals $L_i$.
By compactness of closed bounded intervals in $\mathbb{R}$, $V$ is locally finite if and only if any bounded interval intersects finitely many $L_i$.

\subsection{Decorated reals}\label{subsection:background, decorated reals}

Intervals in $\mathbb{R}$ can be described using the set of \emph{decorated real numbers} $\mathbb{R} \times \{-,+\}$, defined for instance in~\cite{chazal2016structure}.
For real numbers $s \leq t$, intervals in $\mathbb{R}$ are written as shown below.
\begin{align*}
    &\interval{-\infty, \infty} = (-\infty, \infty)  & & \interval{s^-,\infty} = [s,\infty)  & &\interval{s^+,\infty} = (s,\infty) \\
    &\interval{-\infty, t^+} = (-\infty, t]          & &\interval{s^-,t^+} = [s,t]           & &\interval{s^+,t^+} = (s,t]         \\
    &\interval{-\infty, t^-} = (-\infty, t)          & & \interval{s^-,t^-} = [s,t)          & &\interval{s^+,t^-} = (s,t) 
\end{align*}
The decorated reals are given the lexicographic order using the usual order on $\mathbb{R}$ and the order $- < +$ on decorations.
For $s,t \in \mathbb{R}$, we let reals be added to decorated reals by setting $s^\pm + t = (s+t)^\pm$, and we use the convention $\pm \infty + t = \pm \infty$.
Intervals maybe written as $L = \interval{b,d}$, where $b,d \in \mathbb{R} \times\{+,-\} \cup \{-\infty, +\infty\}$ are referred to as birth and death times, and we let $L+t = \interval{b+t, d+t}$.

\subsection{Barcodes and persistence diagrams}\label{subsection:background, barcodes}

Since there may be multiple copies of the same interval module in a direct sum defining an interval-decomposable module, the intervals are usually described in the language of multisets.
We will frequently index a multiset by a set, so that elements of the multiset can be identified by their indices.
Formally, a multiset of intervals of $R$ indexed by $I$ is a function from $I$ to the set of intervals of $R$, which we write as $\{L_i\}_{i \in I}$.
Then $\{L_i\}_{i \in I}$ and $\{L'_j\}_{j \in J}$ represent the same multiset if and only if there exists a bijection $f \colon I \to J$ such that $L'_{f(i)} = L_i$ for all $i \in I$.

If a persistence module is interval decomposable, then the multiset of intervals in its decomposition is unique.
Uniqueness of interval decomposition can be proved using a Krull-Schmidt theorem (see~\cite[Theorem~1.3 (Krull--Remak--Schmidt--Azumaya)]{chazal2016structure}), and it can also be proved directly with reasoning similar to the proof of Lemma~\ref{lemma:morphisms_of_interval_modules} below; see~\cite[Theorem~2.2.3]{moy2024persistence}.
However, there can be multiple isomorphisms identifying the module with the same direct sum of interval modules, analogous to different choices of a basis of a vector space.
If $V \cong \bigoplus_{i \in I} \mathbb{I}(L_i)$, the multiset $\{L_i\}_{i \in I}$ is called the \emph{barcode} of $V$, written $\barcode(V)$, and the intervals $L_i$ may be referred to as bars.
Existence of interval decompositions, and thus of barcodes, is guaranteed in a general setting by the following theorem.

\begin{theorem}[Crawley-Boevey~\cite{crawley_boevey_persistence_modules}]\label{theorem: pfd implies interval decomposable}
    If $R$ is a totally ordered set with a countable dense subset in the order topology, then any pointwise finite-dimensional persistence module indexed by $R$ is interval decomposable.
\end{theorem}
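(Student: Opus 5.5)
Since the statement is Crawley-Boevey's theorem, which the paper cites rather than reproves, I will sketch the standard argument from~\cite{crawley_boevey_persistence_modules} in the paper's notation. Fix a pointwise finite-dimensional $V$ indexed by $R$. For each $t \in R$ and each cut $c=(c^-,c^+)$ of $R$, I will define subspaces of $V_t$ from images and kernels of structure maps. For a lower cut $c^-$ strictly before $t$, set $\im^+_{c,t}$ to be the intersection of $\im V_{s\le t}$ over $s\in c^-$, and set $\im^-_{c,t}$ to be the union (a sum of a chain) of these images over $s\in c^+$ with $s\le t$. Kernels $\ker^\pm_{c,t}$ are defined analogously using $V_{t\le u}$ for $u$ after $t$. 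Because $V_t$ is finite dimensional, every such chain of subspaces stabilizes. So each union or intersection is attained at a single index, and this is where pointwise finite dimensionality is used.

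For an interval $L$ with lower cut $b$ and upper cut $d$ containing $t$, I will define the subquotient
\[
V_{L,t}=\frac{\im^+_{b,t}\cap\ker^+_{d,t}}{(\im^-_{b,t}\cap\ker^+_{d,t})+(\im^+_{b,t}\cap\ker^-_{d,t})}.
\]
The structure maps induce isomorphisms $V_{L,s}\cong V_{L,t}$ for $s\le t$ in $L$; this is a check that uses stabilization. Next, I choose a complement $C_{L,t}$ of the denominator inside the numerator. These choices must be compatible along $L$, meaning $V_{s\le t}(C_{L,s})=C_{L,t}$. Then $C_{L,\bullet}$ spans a submodule isomorphic to $\mathbb{I}(L)^{\dim V_{L,t}}$.

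The key algebraic step is a functorial filtration lemma, the Gabriel–Ringel style ``sections'' argument. For each $t$, the families of images and kernels form two filtrations of $V_t$, and the quotients $V_{L,t}$ are exactly their joint subquotients. By a Zassenhaus/butterfly argument, choosing complements $C_{L,t}$ for all $L\ni t$ gives $V_t=\bigoplus_{L\ni t}C_{L,t}$. Finite dimensionality ensures that only finitely many $L$ contribute at each $t$, and that the filtrations are finite chains, so the direct sum decomposition holds at each point.

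The main obstacle is making the complements coherent across all $t\in L$ at once. Here the countable dense subset is needed. I would enumerate a countable dense set $D$ and choose complements at points of $D$ inductively, transporting each choice forward and backward along structure maps, which act isomorphically on the relevant subquotients. Density then lets the choices extend to every $t$: each $t$ is approached by points of $D$, and stabilization of the finite-dimensional chains makes the limit choice well-defined. Once coherence holds, $V\cong\bigoplus_L\mathbb{I}(L)^{\dim V_{L,t}}$, which is the claimed decomposition.
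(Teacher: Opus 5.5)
The paper gives no proof of this statement; it is quoted from Crawley-Boevey, so your sketch has to stand on its own. Its architecture is that of Crawley-Boevey's proof: subspaces of $V_t$ attached to cuts, the subquotients $V_{L,t}$, a pointwise decomposition from the image and kernel chains, then coherent complements along each interval. As written, however, two steps are wrong.

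First, in your image subspaces the union and the intersection are attached to the wrong halves of the cut. Since $\mathrm{im}\,V_{s\le t}$ grows with $s$:
\begin{itemize}
\item your $\mathrm{im}^-_{c,t}$, a union over $s\in c^+$ with $s\le t$, contains the term $s=t$ and so equals $V_t$;
\item your $\mathrm{im}^+_{c,t}$, an intersection over the down-set $c^-$, equals $\bigcap_{s\le t}\mathrm{im}\,V_{s\le t}$ for every cut with $c^-\neq\varnothing$.
\end{itemize}
Neither depends on $c$. Moreover, the denominator of your $V_{L,t}$ contains $\ker^+_{d,t}$ and hence the numerator, so the construction collapses. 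The correct definitions are, for $t\in c^+$,
\[
\mathrm{im}^-_{c,t}=\bigcup_{s\in c^-}\mathrm{im}\,V_{s\le t},\qquad \mathrm{im}^+_{c,t}=\bigcap_{s\in c^+,\ s\le t}\mathrm{im}\,V_{s\le t},
\]
and, for $t\in c^-$, $\ker^-_{c,t}=\bigcup_{u\in c^-,\ u\ge t}\ker V_{t\le u}$ and $\ker^+_{c,t}=\bigcap_{u\in c^+}\ker V_{t\le u}$. Empty unions are $0$ and empty intersections are $V_t$; make sure your ``analogously'' for kernels does not repeat the swap. With these, $\mathrm{im}^-\subseteq\mathrm{im}^+$ and $\ker^-\subseteq\ker^+$, and your formula for $V_{L,t}$ is the right one.

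Second, the coherence step is misdescribed. Pushing a complement forward along $V_{s\le t}$ does give a complement, but going backward is not a transport. The map $V_{s\le t}$ sends the numerator of $V_{L,s}$ onto that of $V_{L,t}$, because $V_{s\le t}(\mathrm{im}^+_{b,s})=\mathrm{im}^+_{b,t}$ by stabilization and $\ker^+_{d,s}=V_{s\le t}^{-1}(\ker^+_{d,t})$. It is not injective, though, so each backward step is a choice of lift.

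In particular, at a point of $L$ below all of $D\cap L$, the complement need not be determined by the choices on $D$. Take $V=\mathbb{I}[0,1]\oplus\mathbb{I}(\{0\})$, with $e_1,e_2$ spanning the two summands at $0$, $L=[0,1]$ and $D=\mathbb{Q}\setminus\{0\}$. Then every line spanned by $e_1+\lambda e_2$ at $t=0$ is compatible. In a general $R$ there can be two such points (a least element of $L$ and its immediate successor), and lifts chosen independently there need not be compatible. So ``the limit choice is well defined'' is false.

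The clean fix is as follows. If $L$ has a least element, choose a complement there and push it forward. Otherwise $D\cap L$ is coinitial in $L$: for $s''<s'<s$ in $L$, the open interval $(s'',s)$ meets $D$ inside $L$ below $s$. So take a coinitial sequence $t_1\ge t_2\ge\cdots$ in $L$ and choose $C_{L,t_1}$. Lift a basis step by step to get $C_{L,t_{n+1}}$ with $V_{t_{n+1}\le t_n}(C_{L,t_{n+1}})=C_{L,t_n}$. Then put $C_{L,t}=V_{t_n\le t}(C_{L,t_n})$ for any $t_n\le t$; this is independent of $n$. This is how the countable dense subset enters. Note that the stabilization lift your extension step alludes to, if made precise, works over an arbitrary set of previously chosen points above $t$. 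So it is not what the countability hypothesis is for.
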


Let $V \cong \bigoplus_{i \in I} \mathbb{I}\interval{b_i, d_i}$ be a persistence module indexed by $\mathbb{R}$, and for each $i$, let $\overline{b_i}$ and $\overline{d_i}$ be elements of the extended reals, obtained by removing decorations from $b_i$ and $d_i$.
The (undecorated) \emph{persistence diagram} $\dgm(V)$ is defined to be the multiset of ordered pairs $(\overline{b_i}, \overline{d_i})$ for all $i \in I$ such that $\overline{b_i} \neq \overline{d_i}$.
It is a subset of the extended half plane $\{(x,y) \in (\mathbb{R}\cup\{\pm \infty\})^2 \mid x<y\}$.
This definition of the persistence diagram applies to any interval-decomposable persistence module indexed by $\mathbb{R}$, and it is also common to extend the definition to q-tame modules; we will address this in Section~\ref{section: isometry theorem q-tame}.

\subsection{Distances}\label{subsection:background, distances}

Persistence modules indexed by $\mathbb{R}$ are compared by ``approximate isomorphisms'' called \emph{interleavings}.
If $V$ is a persistence module indexed by $\mathbb{R}$, then for any $\varepsilon \in \mathbb{R}$ there is a shifted module $V_{\_ + \varepsilon}$ with $t$ component $V_{t + \varepsilon}$ and with structure map $V_{s + \varepsilon \leq t + \varepsilon}$ for each inequality $s \leq t$.
For $\varepsilon \geq 0$, we have a morphism $V \to V_{\_ + \varepsilon}$ with $t$ component $V_{t \leq t+ \varepsilon}$, which we refer to as a shift morphism.
For any $\varepsilon \geq 0$, an \emph{$\varepsilon$-interleaving} between persistence modules $U$ and $V$ is a pair of morphisms $\varphi \colon U \to V_{\_ + \varepsilon}$ and $\psi \colon V \to U_{\_ + \varepsilon}$ such that the following diagrams commute for all $t \in \mathbb{R}$.
\[
\begin{tikzcd}
U_t \arrow[rd, "\varphi_t"'] \arrow[rr, "U_{t \leq t+2 \varepsilon}"] &                                                          & U_{t + 2 \varepsilon} &                                                                     & U_{t + \varepsilon} \arrow[rd, "\varphi_{t+\varepsilon}"] &                       \\
                                                                      & V_{t + \varepsilon} \arrow[ru, "\psi_{t+ \varepsilon}"'] &                       & V_t \arrow[rr, "V_{t \leq t+ 2 \varepsilon}"'] \arrow[ru, "\psi_t"] &                                                           & V_{t + 2 \varepsilon}
\end{tikzcd}
\]
Equivalently, $\psi_{\_ + \varepsilon} \circ \varphi$ is the shift map $U \to U_{\_ + 2 \varepsilon}$ and $\varphi_{\_ + \varepsilon} \circ \psi$ is the shift map $V \to V_{\_ + 2 \varepsilon}$, where $\varphi_{\_ + \varepsilon} \colon U_{\_ + \varepsilon} \to V_{\_ + 2 \varepsilon}$ and $\psi_{\_ + \varepsilon} \colon V_{\_ + \varepsilon} \to U_{\_ + 2 \varepsilon}$ are shifted versions of $\varphi$ and $\psi$.
The \emph{interleaving distance} between two persistence modules indexed by $\mathbb{R}$ is defined by
\[
d_I(U,V) = \inf \{ \varepsilon>0 \mid \text{there exists an $\varepsilon$-interleaving between $U$ and $V$} \}.
\]
The interleaving distance satisfies the triangle inequality, which can be seen by appropriately composing interleavings.

Barcodes of persistence modules indexed by $\mathbb{R}$ are compared by testing how closely their intervals match.
Given $\barcode(V) = \{\interval{b_i,d_i}\}_{i \in I}$ and $\barcode(U) = \{\interval{b'_j, d'_j}\}_{j \in J}$ with fixed indexing as shown, we define a \emph{matching} of the barcodes to be a subset $M \subseteq I \times J$ such that for each $i \in I$ there is at most one $j$ such that $(i,j) \in M$, and similarly for each $j \in J$ there is at most one $i \in I$ such that $(i,j) \in M$.
If $(i,j) \in M$, we will say $\interval{b_i,d_i}$ and $\interval{b'_j,d'_j}$ are matched by $M$.
We will say $M$ is an \emph{$\varepsilon$-matching} if
\begin{enumerate}
    \item every bar in both $\barcode(U)$ and $\barcode(V)$ that contains a closed interval of length $2 \varepsilon$ is matched by $M$ to a bar in the other barcode
    \item for all $(i,j) \in M$, we have $\interval{b_i, d_i} \subseteq \interval{b'_j - \varepsilon, d'_j + \varepsilon}$ and $\interval{b'_j, d'_j} \subseteq \interval{b_i - \varepsilon, d_i + \varepsilon}$.
\end{enumerate}
The \emph{bottleneck distance} between two barcodes is defined by
\[
d_B( \barcode(U), \barcode(V)) = \inf \{ \varepsilon>0 \mid \text{there exists an $\varepsilon$-matching between $\barcode(U)$ and $\barcode(V)$}\}.
\]

The definition of $\varepsilon$-matchings of barcodes used here is equivalent to that in~\cite{Bauer_Lesnick_2015}. 
This is slightly stricter than the definition of an $\varepsilon$-matchings of persistence diagrams in~\cite{chazal2016structure}, which requires 1) that all points $(\overline{b}, \overline{d})$ in either diagram such that $\overline{d} - \overline{b} > 2\varepsilon$ are matched and 2) that $|\overline{b}_1 - \overline{b}_2| \leq \varepsilon$ and $|\overline{d}_1 - \overline{d}_2| \leq \varepsilon$ if $(\overline{b}_1, \overline{d}_1)$ and $(\overline{b}_2, \overline{d}_2)$ are matched.
The bottleneck distance is defined analogously for diagrams and also written as $d_B$.
These definitions of $\varepsilon$-matchings for barcodes and diagrams differ only in the treatment of open and closed endpoints of bars, so the bottleneck distance is not affected by this difference.
In either case, the bottleneck distance satisfies the triangle inequality, which can be checked by composing matchings (following the general definition of composition of relations).

\section{Matrices Represent Morphisms}\label{section:matrices represent morphisms}

One-parameter persistence modules can informally be thought of as vector spaces evolving over time.
With reasonable assumptions, this analogy extends to bases, so that persistence modules can be completely described by basis elements with lifetimes.
While there can be some choice in the basis elements, the collection of lifetimes is invariant: this is the barcode of the persistence module, analogous to the dimension of a vector space.
To continue the analogy, we should expect a morphism to be expressible in terms of these basis elements with lifetimes and thus described by a matrix.
This perspective is most often taken for persistence modules of finite type indexed by the natural numbers, and it follows that it will also apply to persistence modules of finite type indexed by any totally ordered set.

Here we describe general matrix representations of morphisms of persistence modules of finite type\footnote{Generalizations to infinite cases are briefly considered in Section~\ref{subsection: infinite barcodes}.}.
We will fix a field $F$ and work with vectors spaces over $F$ throughout.
In general, we let persistence modules be indexed by a totally ordered set $R$, where the main case of interest and primary example is $R=\mathbb{R}$.
Readers used to viewing persistence modules indexed by $\mathbb{N}$ as graded modules over $F[t]$ should note that the techniques here are similar to matrix computations with graded modules; the approach here makes no mention of an underlying ring and is instead centered around the total order of $R$.

In order to handle the general index set $R$ while keeping $\mathbb{R}$ at the forefront, we use the following notation for intervals in $R$ that agrees with the use of decorated real numbers to describe intervals in $\mathbb{R}$.
A \emph{cut} $c$ of $R$ is an ordered pair $(c^\downarrow, c^\uparrow)$ of subsets (allowed to be empty) such that $c^\downarrow \cup c^\uparrow = R$ and for all $r_1 \in c^\downarrow$ and $r_2 \in c^\uparrow$, we have $r_1 < r_2$.
Cuts are ordered by inclusion of their lower sets, that is, $c_1 \leq c_2$ if $c_1^\downarrow \subseteq c_2^\downarrow$.
Given cuts $b$ and $d$ of $R$ with $b\leq d$, we define $\interval{b,d} = b^\uparrow \cap d^\downarrow$ and refer to $b$ and $d$ as birth and death times.
We also let $-\infty$ and $\infty$ denote the cuts $(\varnothing, R)$ and $(R, \varnothing)$, so that $\interval{-\infty,d} = d^\downarrow$ and $\interval{b,\infty} = b^\uparrow$.
Any nonempty interval $L \subseteq R$ can be written uniquely as $L = \interval{b,d}$.

We begin by examining the possible morphisms between interval modules.
Let $L = \interval{b,d}$ and $L' = \interval{b', d'}$, and suppose $\varphi \colon \mathbb{I}(L') \to \mathbb{I}(L)$ is a morphism.
A component $\varphi_{t}$ is zero if $t \notin L \cap L'$, and if $t \in L \cap L'$ then $\varphi_{t} \colon F \to F$ is given by multiplication by a scalar.
Furthermore, if $t,t' \in L \cap L'$, then since the maps of $\varphi$ commute with the structure maps of $\mathbb{I}(L)$ and $\mathbb{I}(L')$, both $\varphi_t$ and $\varphi_{t'}$ are given by multiplication by the same scalar $x \in F$.
This scalar $x$ can only be nonzero if $b \leq b'$ and $d \leq d'$.
This can be shown by examining the components of a morphism in the case that either $b'<b$ or $d' < d$: if $b'<b$, then choosing any $t \in \interval{b',b}$, for any $t' \in L \cap L'$ we have $t < t'$ and thus
\[
\varphi_{t'}(v) = \varphi_{t'}(\mathbb{I}(L')_{t \leq t'}(v)) = \mathbb{I}(L)_{t \leq t'}(\varphi_t(v)) = \mathbb{I}(L)_{t \leq t'}(0) = 0.
\]
A similar argument applies if $d' < d$.
Finally, if $b \leq b' < d \leq d'$, then any choice of $x$ yields a morphism, as the commutativity conditions are satisfied for $t,t' \in L \cap L'$ and are trivial otherwise; and in this case, distinct choices of $x$ yield distinct morphisms since $L \cap L' \neq \varnothing$.
We have thus observed the following, previously noted in~\cite[Example~4.1]{bubenik2021homological} and \cite[Equation~5]{dey2024efficient}, for instance.

\begin{lemma}\label{lemma:morphisms_of_interval_modules}
    Given $L = \interval{b,d}$ and $L'=\interval{b',d'}$,
    \[
    \Hom \big( \mathbb{I}(L'), \mathbb{I}(L) \big) \cong \begin{cases}
        F & \text{ if $b \leq b' < d \leq d'$}\\
        0 & \text{ otherwise,}\\
    \end{cases}
    \]
    where the isomorphism sends $\varphi \colon \mathbb{I}(L') \to \mathbb{I}(L)$ to the $x \in F$ such that $\varphi_t$ is given by multiplication by $x$ for any $t \in L \cap L'$.
\end{lemma}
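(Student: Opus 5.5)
The plan is to organize the argument in the order the discussion above already suggests, making each step explicit. First I will show that the map $\varphi \mapsto x$ is well defined and injective on $\Hom(\mathbb{I}(L'),\mathbb{I}(L))$. For $t \notin L \cap L'$, either the source $\mathbb{I}(L')_t$ or the target $\mathbb{I}(L)_t$ is $0$, so $\varphi_t = 0$. For $t \le t'$ both in $L \cap L'$, the interval $L \cap L'$ is convex, so $[t,t'] \subseteq L \cap L'$. Both structure maps $\mathbb{I}(L)_{t\le t'}$ and $\mathbb{I}(L')_{t \le t'}$ are then the identity on $F$, and naturality forces $\varphi_t = \varphi_{t'}$ as scalars. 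Since $R$ is totally ordered, any two points of $L\cap L'$ are comparable, so there is a single scalar $x$. It follows that $\varphi$ is determined by $x$ (when $L\cap L'=\varnothing$, $\varphi=0$), and the assignment is an injective linear map into $F$.

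Second, I will prove that $x = 0$ unless $b \le b' < d \le d'$. This condition fails in one of three ways.
\begin{itemize}
\item If $b' < b$, pick $t \in \interval{b',b}$, so $t\in L'\setminus L$ and $t$ lies below every element of $L$. For $t' \in L\cap L'$, the displayed computation in the text gives $\varphi_{t'} = \mathbb{I}(L)_{t\le t'}\circ\varphi_t \circ(\text{identity}) = 0$, since $\varphi_t$ maps into $\mathbb{I}(L)_t = 0$.
\item If $d' < d$, pick $t' \in \interval{d',d}$, so $t'\in L\setminus L'$ and $t'$ lies above every element of $L'$. For $t \in L\cap L'$, we have $\varphi_t = \mathbb{I}(L)_{t\le t'}^{-1}\circ\varphi_{t'}\circ \mathbb{I}(L')_{t\le t'} = 0$, using that $\mathbb{I}(L)_{t\le t'}$ is the identity and $\mathbb{I}(L')_{t'}=0$.
\item If $b' \ge d$, then $L\cap L' = \varnothing$ and there is nothing to prove.
\end{itemize}
The nonemptiness of intervals such as $\interval{b',b}$ when $b'<b$ is the only point needing care with cuts: $b'<b$ means $b'^\downarrow \subsetneq b^\downarrow$, so some element of $b^\downarrow$ lies in $b'^\uparrow$.

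Third, I will prove surjectivity when $b\le b'<d\le d'$. Here $L\cap L' = \interval{b',d} \ne\varnothing$. Given $x\in F$, define $\varphi_t = x$ on $L\cap L'$ and $0$ elsewhere, and check the square for each $s\le t$ by cases.
\begin{itemize}
\item If both $s,t$ lie in $L\cap L'$, the square commutes because both structure maps are the identity.
\item If $s \notin L'$ or $t\notin L$, one corner of the square is $0$ and both paths are zero.
\item Otherwise $s\in L'$ and $t\in L$ but not both in $L\cap L'$. Then $s\notin L$, which forces $s<b\le b'$, contradicting $s\in L'$; or $t\notin L'$, which forces $t\ge d'\ge d$, contradicting $t\in L$.
\end{itemize}
This last verification is the main (though mild) obstacle. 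It is precisely where both inequalities $b\le b'$ and $d\le d'$ are used, and the case analysis must be organized so that no configuration is missed.
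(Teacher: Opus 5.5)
Your proof is correct and follows the paper's argument: you show the scalar is constant on $L\cap L'$ by naturality, show it vanishes by choosing a point of $\interval{b',b}$ or $\interval{d',d}$, and prove existence by checking the commuting squares, which you verify more explicitly than the paper's ``trivial otherwise.'' One step is left tacit: your chosen $t\in\interval{b',b}$ lies in $L'$ (respectively, $t'\in\interval{d',d}$ lies in $L$) only because some point of $L\cap L'$ lies above it (respectively, below it), which is harmless since the case $L\cap L'=\varnothing$ is trivial.
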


This understanding of interval modules lets us describe morphisms of direct sums.
Let $L_i = \interval{b_i, d_i}$ and $L'_j = \interval{b'_j, d'_j}$ for all $i \in I$ and $j \in J$, with $I$ and $J$ finite.
If $U = \bigoplus_{j \in J} \mathbb{I}(L'_j)$ and $V = \bigoplus_{i \in I} \mathbb{I}(L_i)$, then
\begin{equation}\label{equation:first description of hom sets}
\Hom (U,V) \cong \bigoplus_{\substack{i \in I \\ j \in J}} \Hom \big( \mathbb{I}(L'_j), \mathbb{I}(L_i) \big).
\end{equation}
This isomorphism sends $\varphi \colon U \to V$ to $\big\{ \varphi^{i,j} \big\}_{i,j}$, where each $\varphi^{i,j}$ is the composition of $\varphi$ with inclusion and projection maps: $\mathbb{I}(L'_j) \to U \xrightarrow[]{\varphi} V \to \mathbb{I}(L_i)$.
By Lemma~\ref{lemma:morphisms_of_interval_modules}, identifying each $\varphi^{i,j}$ with a scalar $x_{i,j}$ gives
\begin{equation}\label{equation:second description of hom sets}
\Hom (U,V) \cong \Big\{ \{x_{i,j}\}_{\substack{i \in I \\ j \in J}} \,\Bigr\vert\, \text{$x_{i,j} \in F$, $x_{i,j}=0$ if $b_i > b'_j$ or $d_i > d'_j$ or $b'_j \geq d_i$} \Big\},
\end{equation}
where now the morphism $\varphi \colon U \to V$ is sent to the matrix $X = \{x_{i,j}\}_{i,j}$ such that the composition $\mathbb{I}(L'_j)_t \to U_t \xrightarrow[]{\varphi_t} V_t \to \mathbb{I}(L_i)_t$ is given by multiplication by $x_{i,j}$ for any $t \in L_i \cap L'_j$.

It is possible to describe composition of morphisms using the matrices above, but it turns out that the operation is simplified by first obtaining an alternate description of the hom-set, derived from presentations of interval-decomposable modules.
First, we note that an interval module $\mathbb{I}\interval{b,d}$ can be presented by the short exact sequence
\[
\begin{tikzcd}
0 \arrow[r] & {\mathbb{I} \interval{d, \infty}} \arrow[r, hook] & {\mathbb{I} \interval{b, \infty}} \arrow[r, two heads] & {\mathbb{I} \interval{b,d}} \arrow[r] & 0.
\end{tikzcd}
\]
If $d = \infty$, then $\mathbb{I} \interval{\infty, \infty}$ is the zero module. 
Later we will also see cases where it is convenient to allow $b=d$, although for a first reading of this section, and throughout the work on the isometry theorem, it is safe to restrict to cases where $b < d$.

We can take finite direct sums of these presentations: given a persistence module $V \cong \bigoplus_{i \in I} \mathbb{I}\interval{b_i, d_i}$ of finite type, we have a short exact sequence 
\[
\begin{tikzcd}
0 \arrow[r] & {\bigoplus_{i \in I} \mathbb{I} \interval{d_i, \infty}} \arrow[r, hook] & {\bigoplus_{i \in I} \mathbb{I} \interval{b_i, \infty}} \arrow[r, two heads] & V \arrow[r] & 0.
\end{tikzcd}
\]
Elements of $V_t$ can thus be represented by equivalence classes of (column) vectors $v = \{v_i\}_{i \in I}$.
Each entry is constrained to be zero before the corresponding bar is born, that is, $v_i = 0$ if $t \in \interval{-\infty, b_i}$; and each entry is quotiented out after the corresponding bar dies, that is, $\{v_i\}_i$ is equivalent to $\{v'_i\}_i$ in $V_t$ if and only if $v_i = v'_i$ for all $i$ such that $t \in \interval{-\infty, d_i}$.
We will denote the equivalence class of $v$ by $[v]$ when the module $V$, its interval decomposition, and the parameter $t$ are understood.
This naturally leads to matrices with quotiented entries, as in the following theorem.

\begin{theorem}\label{theorem: hom set isomorphism}

    Given persistence modules of finite type $U \cong \bigoplus_{j \in J} \mathbb{I}\interval{b'_j,d'_j}$ and $V \cong \bigoplus_{i \in I} \mathbb{I}\interval{b_i,d_i}$, we have an isomorphism
    \[
    \Hom(U,V) \cong \Big\{ 
    \{x_{i,j}\}_{\substack{i \in I \\ j \in J}} \,\Bigr\vert\, x_{i,j} = 0 \text{ if $b_i>b'_j$ or $d_i>d'_j$} \Big\} \Bigr/ \sim,
    \]
    where $\{x_{i,j}\}_{i,j} \sim  \{x'_{i,j}\}_{i,j}$ if $x_{i,j} = x'_{i,j}$ whenever $b'_j<d_i$.
    Under this isomorphism, $\varphi \colon U \to V$ corresponds to the equivalence class of a matrix $X = \{x_{i,j}\}_{i,j}$ such that the composition of $\varphi_t$ with inclusion and projection maps $\mathbb{I}\interval{b'_j,d'_j}_t \to U_t \xrightarrow[]{\varphi_t} V_t \to \mathbb{I}\interval{b_i,d_i}_t$ is given by multiplication by $x_{i,j}$ for any $t \in \interval{b'_j,d'_j} \cap \interval{b_i,d_i}$.
    Furthermore, composition of morphisms corresponds to matrix multiplication: under the respective isomorphisms, if $\varphi \colon U \to V$ and $\omega \colon W \to U$ correspond to the classes of matrices $X$ and $Y$, then $\varphi \circ \omega$ corresponds to the class of $XY$.
\end{theorem}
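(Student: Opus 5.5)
We will deduce the isomorphism from the description \eqref{equation:second description of hom sets}, which already identifies $\Hom(U,V)$ with matrices $\{x_{i,j}\}$ satisfying $x_{i,j}=0$ whenever $b_i>b'_j$, $d_i>d'_j$, or $b'_j\geq d_i$. We define a map from the quotient set in the statement to this set. Take a representative $X$ and replace by zero every entry with $b'_j \geq d_i$. This is well defined on equivalence classes, because two equivalent matrices agree on all entries with $b'_j<d_i$. The image satisfies all three vanishing conditions.

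Conversely, any matrix in \eqref{equation:second description of hom sets} is itself a representative of a class, since its entries vanish whenever $b_i>b'_j$ or $d_i>d'_j$. The two maps are mutually inverse and $F$-linear, because the quotient simply forgets exactly the entries with $b'_j\geq d_i$. We obtain the isomorphism, and the description of $x_{i,j}$ as the scalar of the $(i,j)$ component on $\interval{b'_j,d'_j}\cap\interval{b_i,d_i}$ is inherited from Lemma~\ref{lemma:morphisms_of_interval_modules}. When $b'_j\geq d_i$ this intersection is empty, so the condition is vacuous, and this matches the entry being arbitrary.

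For composition, let $W\cong\bigoplus_{k\in K}\mathbb{I}\interval{b''_k,d''_k}$, and let $X,Y$ be any representatives of the classes of $\varphi$ and $\omega$. We proceed in four steps.

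\textbf{Step 1: $XY$ is an admissible representative.} Suppose $(XY)_{i,k}=\sum_j x_{i,j}y_{j,k}$ has a nonzero term. Then $b_i\leq b'_j\leq b''_k$ and $d_i\leq d'_j\leq d''_k$, so $XY$ satisfies the zero conditions.

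\textbf{Step 2: the class of $XY$ does not depend on the representatives.} Changing $x_{i,j}$ with $b'_j\geq d_i$ affects $(XY)_{i,k}$ only through terms with $y_{j,k}\neq 0$. Such terms force $b''_k\geq b'_j\geq d_i$, and those entries are quotiented out. Changing $y_{j,k}$ with $b''_k\geq d'_j$ affects only terms with $x_{i,j}\neq0$, which forces $d_i\leq d'_j\leq b''_k$; again these entries are quotiented out.

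\textbf{Step 3: the $(i,k)$ component of $\varphi\circ\omega$ is multiplication by $(XY)_{i,k}$.} Fix $t\in\interval{b''_k,d''_k}\cap\interval{b_i,d_i}$ and $v\in F=\mathbb{I}\interval{b''_k,d''_k}_t$. The image $\omega_t(v)\in U_t$ has $j$-th coordinate $y_{j,k}v$ whenever $t\in\interval{b'_j,d'_j}$, and coordinate zero when $t\notin\interval{b'_j,d'_j}$. Applying $\varphi_t$ and projecting to the $i$-th summand gives $\sum_j x_{i,j}y_{j,k}v$, where the sum runs only over $j$ with $t\in\interval{b'_j,d'_j}$.

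\textbf{Step 4: the omitted terms vanish.} Consider a $j$ with $t\notin\interval{b'_j,d'_j}$ but $x_{i,j}y_{j,k}\neq0$. Then $b_i\leq b'_j\leq b''_k$ and $d_i\leq d'_j\leq d''_k$. Since $t$ lies in both $\interval{b''_k,d''_k}$ and $\interval{b_i,d_i}$, we get $t\in b'^{\uparrow}_j$ and $t\in d'^{\downarrow}_j$. So $t\in\interval{b'_j,d'_j}$, a contradiction. Hence the sum equals $(XY)_{i,k}v$.

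The main obstacle is this last bookkeeping: the projection identity for $\varphi\circ\omega$ only sees $j$ with $t$ in the middle bar, and the cut inequalities are what ensure no contribution is lost.
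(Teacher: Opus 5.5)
Your proof is correct, but it takes a different route from the paper's.

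\textbf{The paper's route.} It never zeroes out trivial entries. It works with the presentations $0\to\bigoplus_j\mathbb{I}\interval{d'_j,\infty}\to\bigoplus_j\mathbb{I}\interval{b'_j,\infty}\to U\to 0$ (and likewise for $V$). Every admissible matrix $X$, trivial entries included, defines maps $\psi,\chi$ on the presentation terms, and hence a morphism $f(X)$. Surjectivity of $f$ comes from Equation~\eqref{equation:second description of hom sets}. The kernel of $f$ is identified with the matrices supported on trivial entries, because such $X$ send $\bigoplus_j\mathbb{I}\interval{b'_j,\infty}$ into the relation module $\bigoplus_i\mathbb{I}\interval{d_i,\infty}$. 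Composition then comes almost for free. On the terms $\bigoplus\mathbb{I}\interval{b,\infty}$, whose bars never die, composing morphisms of short exact sequences is literally matrix multiplication. So only your Step 1, that the product has the right zero pattern, remains to be checked.

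\textbf{Your route.} You build the bijection with Equation~\eqref{equation:second description of hom sets} combinatorially and verify composition pointwise. Your Step 4 is exactly the fact that makes the ``altered multiplication'' of the paper's footnote agree with ordinary multiplication on classes: any $j$ contributing a nonzero $x_{i,j}y_{j,k}$ automatically has $t\in\interval{b'_j,d'_j}$.

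\textbf{Trade-offs.} Your argument is more elementary and self-contained, using only Lemma~\ref{lemma:morphisms_of_interval_modules} and additivity of $\Hom$. The paper's argument explains why trivial entries are invisible, and it directly gives the vector formula $\varphi_t([u])=[Xu]$. It also sets up the presentation diagrams reused later, for example in Theorem~\ref{theorem: factor through ker and coker}.

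\textbf{A small simplification.} Steps 1, 3 and 4 hold for arbitrary representatives $X,Y$, so together with your bijection they already show that $[XY]$ is the class of $\varphi\circ\omega$. Step 2 therefore follows automatically and can be dropped.
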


\begin{proof}

We use presentations of $U$ and $V$ as described above. 
Any pair of morphisms $\psi$ and $\chi$ making the left square in the diagram below commute define a morphism $\varphi \colon U \to V$.
\[
\begin{tikzcd}
0 \arrow[r] & {\bigoplus_{j \in J} \mathbb{I} \interval{d'_j, \infty}} \arrow[r, hook] \arrow[d, "\psi"'] & {\bigoplus_{j \in J} \mathbb{I} \interval{b'_j, \infty}} \arrow[r, two heads] \arrow[d, "\chi"'] & U \arrow[r] \arrow[d, "\varphi"] & 0 \\
0 \arrow[r] & {\bigoplus_{i \in I} \mathbb{I} \interval{d_i, \infty}} \arrow[r, hook]                     & {\bigoplus_{i \in I} \mathbb{I} \interval{b_i, \infty}} \arrow[r, two heads]                     & V \arrow[r]                      & 0
\end{tikzcd}
\]
Such a pair of maps $\psi$ and $\chi$ can be defined from any matrix $X = \{x_{i,j}\}_{i,j}$ in
\[
M = \Big\{ 
    \{x_{i,j}\}_{\substack{i \in I \\ j \in J}} \,\Bigr\vert\, x_{i,j} = 0 \text{ if $b_i>b'_j$ or $d_i>d'_j$} \Big\}.
\]
That is, $\chi$ is defined by requiring that for each $i$ and $j$ and for all $t \in \interval{b'_j, \infty} \cap \interval{b_i, \infty}$, the map ${\mathbb{I} \interval{b'_j, \infty}_t \to \mathbb{I} \interval{b_i, \infty}_t}$ obtained by composing inclusion and projection maps with $\chi_t$ is given by multiplication by $x_{i,j}$, and $\psi$ is defined analogously.
This condition is satisfied vacuously by the zero map in cases of infinite birth or death times, and otherwise, Lemma~\ref{lemma:morphisms_of_interval_modules} assures these maps on interval modules are well defined.
It follows that components of $\varphi$ are computed as stated in the theorem.

The linear map $f \colon M \to \Hom(U,V)$ sending the matrix $X$ to the corresponding $\varphi$ is surjective, since $X$ can be chosen to be the matrix identified with a given morphism by Equation~\ref{equation:second description of hom sets}.
The claimed isomorphism $\Hom(U,V) \cong M / \sim$ will thus follow if we show $\ker f$ consists of all $X \in M$ such that $x_{i,j} = 0$ if $b'_j<d_i$.
Any $X$ of this form is in $\ker f$, since then $\im \chi$ is contained in $\bigoplus_{i \in I} \mathbb{I} \interval{d_i, \infty}$.
For the converse, suppose $X \in \ker f$ and $b'_j<d_i$.
If $b_i>b'_j$ or $d_i>d'_j$, then the definition of $M$ shows $x_{i,j} = 0$.
If not, then $b_i \leq b'_j < d_i \leq d'_j$, so there exists a $t \in \interval{b'_j,d'_j} \cap \interval{b_i,d_i}$, and since the composition $\mathbb{I}\interval{b'_j,d'_j}_t \to U_t \xrightarrow[]{f(X)_t} V_t \to \mathbb{I}\interval{b_i,d_i}_t$ is given by multiplication by $x_{i,j}$, we have $x_{i,j} = 0$, as required.

The fact that composition of morphisms corresponds to matrix multiplication now follows by composing morphisms of short exact sequences like the above, and we check that the product is a matrix of the correct form as follows.
Suppose we have birth times $\{b''_k\}_{k \in K}$ and a matrix $Y = \{y_{j,k}\}_{j \in J, k \in K}$ such that $y_{j,k} = 0$ if $b'_j > b''_k$.
If $b_i > b''_k$, then for all $j$ we have $b_i > b'_j$ or $b'_j > b''_k$ and thus $x_{i,j} = 0$ or $y_{j,k} = 0$, so the $i,k$ entry of $XY$ is $\sum_{j} x_{i,j} y_{j,k} = 0$.
The analogous argument applies for death times.
\end{proof}

We will mostly disregard the isomorphism of Equation~\ref{equation:second description of hom sets} and instead use the identification of morphisms with classes of matrices given by Theorem~\ref{theorem: hom set isomorphism} throughout\footnote{Retroactively, we can determine the operation on the matrices of Equation~\ref{equation:second description of hom sets} corresponding to composition of morphisms.
If the class of $X$ represents $\varphi$ under the isomorphism of Theorem~\ref{theorem: hom set isomorphism}, then replacing all entries $x_{i,j}$ such that $b'_j \geq d_i$ with $0$ yields the matrix corresponding to $\varphi$ in Equation~\ref{equation:second description of hom sets}.
Thus, with the isomorphisms of Equation~\ref{equation:second description of hom sets}, composition of morphisms corresponds to an altered matrix multiplication, in which we first multiply matrices, then change the required entries in the resulting matrix to zero.
The more convenient description of composition in Theorem~\ref{theorem: hom set isomorphism} is the reason we will prefer the isomorphism of the theorem.
However, we will briefly see the other perspective in Section~\ref{subsection: infinite barcodes} when generalizing to modules with infinitely many bars.
}. 
Once interval modules have been appropriately indexed and elements identified with classes of vectors, Theorem~\ref{theorem: hom set isomorphism} gives the expected formula for computing morphisms: if $X$ represents a morphism $\varphi \colon U \to V$ in the chosen interval decompositions and $[u] \in U_t$, then 
\begin{equation}\label{equation:computation_of_morphism}
    \varphi_t([u]) = [Xu].
\end{equation}
The following definitions will provide a useful language for working with these matrices.
Figure~\ref{fig: matrix representation of a morphism} provides a visualization in terms of barcodes.

\begin{definition}
    In the notation of the theorem, when the hom-set and indexing of interval modules is understood, we will denote the equivalence class of a matrix $X$ by $[X]$, and we will say that $X$ \emph{represents} the morphism $\varphi$.
    For compatible hom-sets, we multiply classes of matrices by setting $[X][Y] = [XY]$.
    The $i,j$ entries of a matrix such that $d_i \leq b'_j$ play a special role, as these are the ones that can be altered without affecting the equivalence class: we will call these \emph{trivial entries}, and all other entries will be called nontrivial.
\end{definition}
\begin{figure}[h]
    \centering
    \includegraphics[width=.96\linewidth]{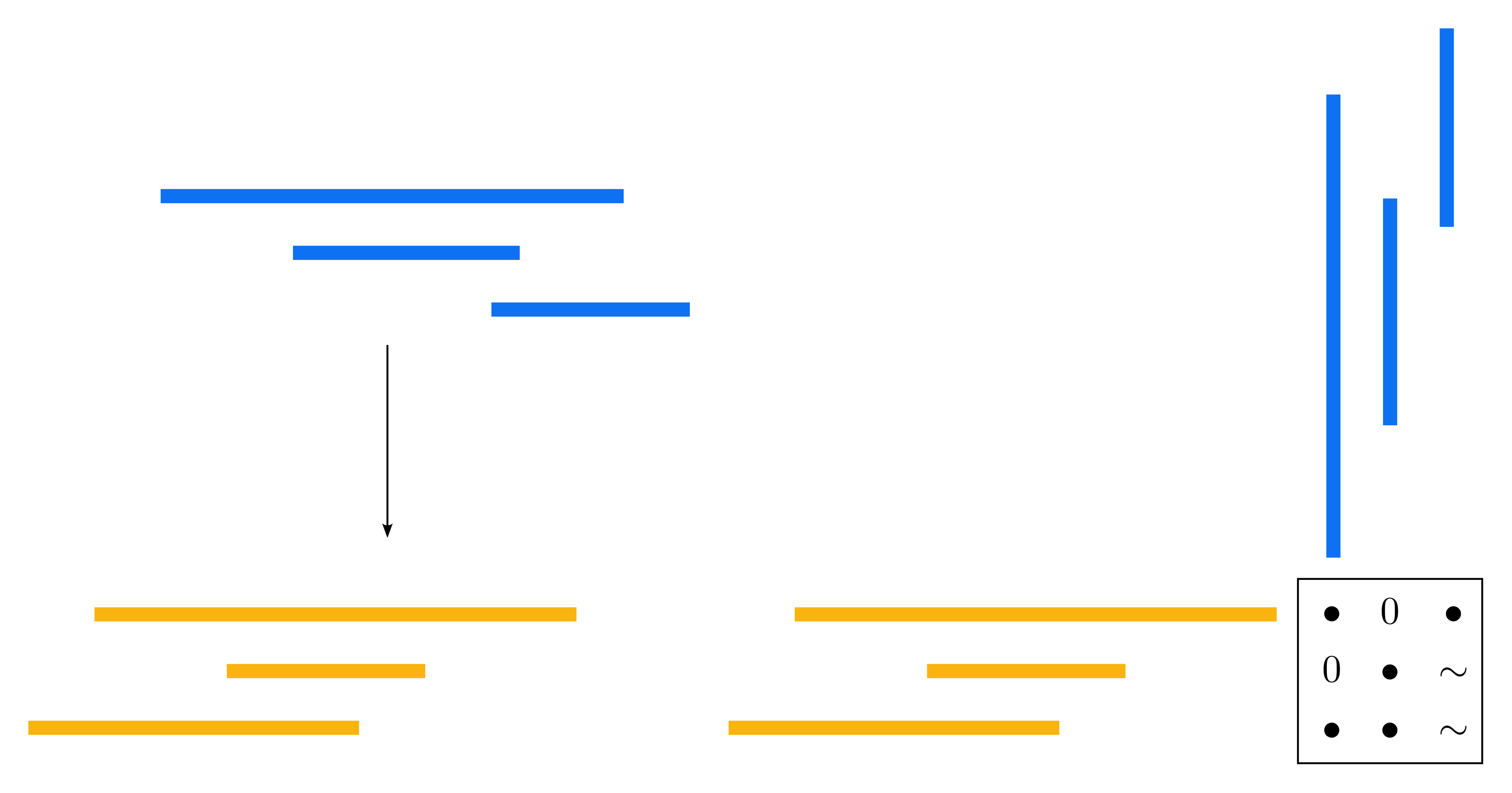}
    \caption{Visualization of a matrix representation of a morphism.
    The relative positions of bars, as show on the left, determine the form of the matrix.
    On the right, the bars of the domain are rotated to align with the corresponding matrix entries.
    In the matrix, the zeros indicate entries that are constrained to be zero, each $\bullet$ indicates a nontrivial entry that can take any value, and the entries marked with $\sim$ are trivial.}
    \label{fig: matrix representation of a morphism}
\end{figure}

\section{Matrix Operations}\label{section:matrix operations for persistence modules}

Here we record some basic operations on matrices representing morphisms of persistence modules.
Since we have seen that the relative positions of birth and death times are important to understanding morphisms, orderings of bars will be useful.
We make the following definition, which specifies orders with a certain symmetry between bars of the domain and codomain.
The benefit of this choice of orders will be evidenced in upcoming proofs.

\begin{definition}
    Let $L_i = \interval{b_i, d_i}$ and $L'_j = \interval{b'_j, d'_j}$ for each $i \in I$ and $j \in J$, and let $U = \bigoplus_{j \in J} \mathbb{I}(L'_j)$ and $V = \bigoplus_{i \in I} \mathbb{I}(L_i)$.
    We will say that total orders $<_J$ and $<_I$ on $J$ and $I$ are \emph{standard orders} for the ordered pair $(U,V)$ if they satisfy $j <_J j'$ if $b'_j < b'_{j'}$ and $i <_I i'$ if $d_{i} > d_{i'}$.
    That is, standard orders for $(U,V)$ order $J$ by birth times of intervals and order $I$ in reverse by death times, with ties broken arbitrarily.
\end{definition}

We will usually omit the subscripts and write all orders using the usual symbols $<$ and $\leq$.
We will also say a matrix representing a morphism $U \to V$ is in standard order if the index sets for its columns and rows have been given standard orders for the pair $(U,V)$.
In a matrix in standard order, trivial entries are clustered in the lower right corner: if $d_i \leq b'_j$, then for any $i' \geq i$ and $j' \geq j$, we have $d_{i'} \leq b'_{j'}$.
See Figure~\ref{fig: matrix representation of a morphism} for an example.

\subsection{Row and column operations and matrix reduction}

For matrices in standard order representing morphisms of persistence modules, we will see that row/column operations are useful when lesser rows/columns are added to greater rows/columns.
We can perform matrix reduction using only these operations in the obvious way: to be explicit, we make the following definition.

\begin{definition}\label{definition: ordered matrix reduction}
    We will call the following operation \emph{ordered column reduction}.
    Given a matrix with ordered rows and columns, perform the following steps, beginning with the least column.
    \begin{enumerate}
        \item\label{item:column reduction step 1} If all entries of the current column are zero, make the next column the current column and repeat this step.
        \item Select as a current pivot the first nonzero entry in the current column.
        \item Add multiples of the current column to each greater than it so as to make the entry in the same row as the current pivot zero.
        \item Make the next column the current column and return to step~\ref{item:column reduction step 1}.
    \end{enumerate}
    Define \emph{ordered row reduction} analogously, interchanging the roles of rows and columns.\footnote{A more general version of ordered row/column reduction could allow multiplying pivot rows/columns by nonzero scalars, for instance, to make the pivots $1$'s.
    This operation can readily be included in computations, but since we will not have any theoretical use for it, we use this more restrictive definition of reduction that does not include a scaling operation.}
\end{definition}

If $X$ is a matrix that represents a morphism $U \to V$ in standard order, ordered column reduction of $X$ produces a matrix $X' = XT$, where $T$ is a square, upper triangular matrix with $1$'s on the diagonal ($T$ is the product of elementary matrices corresponding to the column operations, each found by applying the operation to the identity matrix).
Since rows and columns are not permuted during the reduction, $X'$ is not necessarily in the typical ``staircase'' form obtained when reducing with permutations, but a pivot is still recognized as the first nonzero element in a column, and entries in the same row as a pivot and in a greater column are zero. 
We will continue to use the language of trivial and nontrivial entries for $X'$ (and in fact, we will see in Section~\ref{section: computations} that $X'$ does represent a certain morphism).
In particular, any pivot that is a trivial entry of the matrix will be called a \emph{trivial pivot}.
In a column with a trivial pivot, all entries before the pivot are zero and all entries after are trivial entries, so the column represents a zero element in the codomain.
The following lemma shows that pivot positions are the same for ordered row or column reduction.

\begin{lemma}[Invariance of pivot positions]\label{lemma:same_pivots}
    Ordered row reduction of a matrix produces pivots in the same positions as ordered column reduction.
\end{lemma}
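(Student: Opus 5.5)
The plan is to characterize pivot positions by a quantity that depends only on the original matrix and that both kinds of reduction leave unchanged. The natural choice is the ranks of ``upper-left'' submatrices. For a matrix $X$ with ordered row set $I$ and column set $J$, and for $i \in I$, $j \in J$, let $r_X(i,j)$ denote the rank of the submatrix of $X$ consisting of rows $\leq i$ and columns $\leq j$. We use the convention that $r_X(i,j) = 0$ if $i$ or $j$ is a predecessor of the least element. Writing $i^-$ and $j^-$ for the immediate predecessors, set
\[
\rho_X(i,j) = r_X(i,j) - r_X(i^-,j) - r_X(i,j^-) + r_X(i^-,j^-).
\]
The claim to establish is that after either ordered column reduction or ordered row reduction, the resulting matrix has a pivot at $(i,j)$ if and only if $\rho_X(i,j) = 1$. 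Since $\rho_X$ is computed from the original $X$, this gives the lemma.

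\textbf{Step 1: invariance of ranks.} Every operation in ordered column reduction adds a multiple of a column $j'$ to a column $j$ with $j' < j$. Consider an upper-left block with columns $\leq k$.
\begin{itemize}
\item If $k \geq j$, the block contains both columns, so its column space is unchanged.
\item If $k < j$, the block does not contain column $j$ and is untouched.
\end{itemize}
In either case every $r(i,k)$ is preserved. By the transposed argument, adding a lesser row to a greater row preserves every $r(i,k)$ as well. Hence $r$ and $\rho$ are invariants of both reductions.

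\textbf{Step 2: the structure of the reduced matrix.} After ordered column reduction, nonzero columns have pairwise distinct pivot rows. When column $j'$ is processed with pivot in row $i$, every greater column is cleared in row $i$. Later operations only add columns that are themselves already zero in row $i$, so row $i$ stays zero in all columns after $j'$. Consequently no later column can have its first nonzero entry in row $i$.

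Now restrict the reduced matrix to rows $\leq i$ and columns $\leq j$. A restricted column is nonzero exactly when its pivot lies in a row $\leq i$. These nonzero restricted columns have distinct pivot rows and zeros above each pivot, so they are in echelon form after permuting columns, and hence linearly independent. Therefore $r(i,j)$ equals the number of pivots in the block.

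Inclusion--exclusion then shows that $\rho(i,j)$ counts the pivots at exactly position $(i,j)$. This count is $0$ or $1$.

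\textbf{Step 3: row reduction.} The same argument with rows and columns interchanged applies to ordered row reduction. There the pivot is the first nonzero entry of a row, and the pivot columns are distinct. Combining Steps 1–3, both reductions place pivots exactly where $\rho_X = 1$.

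The main obstacle is Step 2: verifying that the reduction genuinely produces distinct pivot rows and that the zero pattern survives subsequent operations. This is a short invariant check on the algorithm of Definition~\ref{definition: ordered matrix reduction}. Everything else is linear algebra.
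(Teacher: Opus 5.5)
Your proposal is correct and takes the same approach as the paper: ranks of upper-left submatrices are unchanged by ordered row and column operations, and in either reduced matrix these ranks determine the pivot positions. The only difference is that you prove this second step explicitly, via the distinct-pivot invariant and the inclusion--exclusion quantity $\rho_X$, where the paper states it in one sentence.
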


\begin{proof}
    Ordered row and column operations do not change the ranks of upper-left submatrices, that is, submatrices of $X = \{x_{i,j}\}_{i \in I,\, j \in J}$ of the form $\{x_{i,j}\}_{i \leq i_0,\, j \leq j_0}$, so their ranks remain unchanged throughout ordered row or column reduction.
    In either reduced matrix, the ranks of all upper-left submatrices completely determine the pivot positions, so ordered row reduction or column reduction of $X$ must produce pivots in the same positions.
\end{proof}

The proof in fact implies that pivot positions can be considered an invariant of an ordered matrix under ordered row and column operations.
Ranks of submatrices can be used in a similar way in the context of the persistent homology algorithm: see the section ``Pairing'' in~\cite[VII.1]{edelsbrunner_and_harer}.

\subsection{Transposes and Dual Persistence Modules}\label{subsection: transposes and dual modules}

As with vector spaces, if a matrix represents a morphism of persistence modules, then the transpose represents a certain dual morphism.
Let $R^\mathrm{op}$ be the index set $R$ with the order reversed.
Given a persistence module $V$ indexed by $R$, we get a dual module $V^\ast$ indexed by $R^\mathrm{op}$ by letting $V^\ast_t$ be the dual space of $V_t$ and letting each structure map $V^\ast_t \to V^\ast_{t'}$ be the dual of the map $V_{t'} \to V_t$.
Given a morphism $\varphi \colon U \to V$, we get a dual morphism $\varphi^\ast \colon V^\ast \to U^\ast$ by letting each $\varphi^\ast_t$ be the dual of $\varphi_t$.
We have the expected natural isomorphism of pointwise finite-dimensional persistence modules $V^{\ast \ast} \cong V$, with components given by the natural isomorphisms $(V_t)^{\ast \ast} \to V_t$.

Here we write $\mathbb{I}_{R}(L)$ and $\mathbb{I}_{R^\mathrm{op}}(L)$ for the interval modules on $L$ indexed by $R$ and $R^\mathrm{op}$ respectively.
If $V \cong \bigoplus_{i \in I} \mathbb{I}_R(L_i)$ is pointwise finite dimensional, then $V^\ast \cong \bigoplus_{i \in I} \mathbb{I}_{R^\mathrm{op}}(L_i)$:
to construct a specific interval decomposition of $V^\ast$, begin with the basis of each $V_t$ specified by the interval decomposition of $V$ and use the dual basis for $V^\ast_t$, noting that this agrees with the structure maps.
Thus, $V$ and $V^\ast$ have the same barcode, although the structure maps of the interval modules they represent are reversed.
We also have the following representation of dual morphisms.

\begin{lemma}\label{lemma: dual modules and transposes}
    Let $L_i = \interval{b_i, d_i}$ and $L'_j = \interval{b'_j, d'_j}$ for each $i \in I$ and $j \in J$, with $I$ and $J$ finite, and let $U \cong \bigoplus_{j \in J} \mathbb{I}_R(L'_j)$ and $V \cong \bigoplus_{i \in I} \mathbb{I}_R(L_i)$.
    Suppose $X = \{x_{i,j}\}_{i,j}$ represents a morphism $\varphi \colon U \to V$.
    Then using dual bases to define decompositions $U^\ast \cong \bigoplus_{j \in J} \mathbb{I}_{R^\mathrm{op}}(L'_j)$ and $V^\ast \cong \bigoplus_{i \in I} \mathbb{I}_{R^\mathrm{op}}(L_i)$, the transpose $X^{\mathrm{T}}$ represents the dual map $\varphi^\ast \colon V^\ast \to U^\ast$.
\end{lemma}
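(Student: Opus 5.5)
The plan is to verify two things. First, $X^{\mathrm{T}}$ is an admissible matrix for the hom-set $\Hom(V^\ast,U^\ast)$ of modules indexed by $R^\mathrm{op}$, and its trivial entries are exactly the transposes of the trivial entries of $X$, so the class $[X^{\mathrm{T}}]$ is well defined. Second, the morphism this class represents has the same components as $\varphi^\ast$. For the second point I will use the characterization in Theorem~\ref{theorem: hom set isomorphism}: a morphism is determined by the scalars by which the compositions with inclusions and projections act at parameters $t$ in the intersection of two bars. Equivalently, I use the injectivity of the map to Equation~\ref{equation:second description of hom sets}.

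\textbf{Step 1: translate cuts to $R^\mathrm{op}$.} A cut $c=(c^\downarrow,c^\uparrow)$ of $R$ gives the cut $c^\mathrm{op}=(c^\uparrow,c^\downarrow)$ of $R^\mathrm{op}$, and $c_1\le c_2$ in $R$ if and only if $c_2^\mathrm{op}\le c_1^\mathrm{op}$ in $R^\mathrm{op}$. As a subset, $\interval{b,d}$ in $R$ is the interval $\interval{d^\mathrm{op},b^\mathrm{op}}$ in $R^\mathrm{op}$. So the bar $L_i$ of $V^\ast$ has birth $d_i^\mathrm{op}$ and death $b_i^\mathrm{op}$, and similarly for $L'_j$ in $U^\ast$.

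Now consider $X^{\mathrm{T}}$ as a matrix for $V^\ast\to U^\ast$, with domain indexed by $I$ and codomain by $J$. Theorem~\ref{theorem: hom set isomorphism} requires its $(j,i)$ entry to vanish when $d_j'^{\mathrm{op}}>d_i^{\mathrm{op}}$ or $b_j'^{\mathrm{op}}>b_i^{\mathrm{op}}$. These conditions mean $d'_j<d_i$ or $b'_j<b_i$, which are exactly the zero conditions on $x_{i,j}$. The entry is trivial when $b_j'^{\mathrm{op}}\le d_i^{\mathrm{op}}$, that is, when $d_i\le b'_j$. This is precisely when $x_{i,j}$ is trivial. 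Hence $X\mapsto X^{\mathrm{T}}$ is a bijection between admissible matrices that respects $\sim$, and $[X^{\mathrm{T}}]$ is a well-defined element of $\Hom(V^\ast,U^\ast)$.

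\textbf{Step 2: compare components.} Fix $t\in L_i\cap L'_j$. The given decompositions provide bases $\{e'_{j'}\}$ of $U_t$ (over bars alive at $t$) and $\{e_{i'}\}$ of $V_t$. The inclusion/projection composition $\mathbb{I}(L'_j)_t\to U_t\to V_t\to\mathbb{I}(L_i)_t$ is multiplication by $x_{i,j}$, so the $e_i$-coefficient of $\varphi_t(e'_j)$ is $x_{i,j}$. In the dual decompositions, the inclusion $\mathbb{I}_{R^\mathrm{op}}(L_i)_t\to V^\ast_t$ sends $1$ to the dual basis vector $e_i^\ast$, and the projection $U^\ast_t\to\mathbb{I}_{R^\mathrm{op}}(L'_j)_t$ is evaluation at $e'_j$. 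Therefore the corresponding composition for $\varphi^\ast_t$ is
\[
1\mapsto e_i^\ast\circ\varphi_t\mapsto e_i^\ast(\varphi_t(e'_j))=x_{i,j},
\]
which is the $(j,i)$ entry of $X^{\mathrm{T}}$. This holds for every pair $(i,j)$ and every $t$ in the overlap. By the uniqueness part of Theorem~\ref{theorem: hom set isomorphism} applied over $R^\mathrm{op}$, the morphism represented by $[X^{\mathrm{T}}]$ equals $\varphi^\ast$.

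\textbf{Main obstacle.} The only delicate point is the bookkeeping in Step 1. I must check carefully that the dual decomposition really has birth $d_i^\mathrm{op}$ and death $b_i^\mathrm{op}$ in the paper's cut conventions, and that the strict and non-strict inequalities swap correctly. I would verify this directly from the definition $\interval{b,d}=b^\uparrow\cap d^\downarrow$. I would also confirm that the dual structure maps agree with the identity maps on $\mathbb{I}_{R^\mathrm{op}}(L_i)$ under the dual basis, which the paper already notes when constructing the decomposition of $V^\ast$.
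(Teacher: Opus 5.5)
Your proof is correct and follows essentially the same route as the paper. The core step is the linear-algebra identity $e_i^\ast(\varphi_t(e'_j)) = x_{i,j}$ for $t \in L_i \cap L'_j$, and the remaining entries are handled by reversing the order. Your explicit translation $c \mapsto c^{\mathrm{op}}$ shows that the zero constraints and trivial entries transpose correctly, after which you invoke uniqueness; this is a more carefully bookkept version of the paper's case analysis of trivial entries and of nontrivial entries with empty overlap.
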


\begin{proof}
    We split matrix entries into three cases. 
    First, if $L_i \cap L'_j \neq \varnothing$, then we proceed as usual with linear maps: it can be checked that $x_{i,j}$ is the $j,i$ entry of the matrix representing $\varphi^\ast$ by computing $\varphi^\ast_t$ for any $t \in L_i \cap L'_j$ in terms of the dual bases.
    For the second case, if $x_{i,j}$ is a trivial entry of $X$, then $L_i$ dies before $L'_j$ is born in the order of $R$.
    Reversing the order, this means $L'_j$ dies before $L_i$ is born in the order of $R^\mathrm{op}$, so the $j,i$ entry of the matrix representing $\varphi^\ast$ is indeed trivial as well.
    Finally, we use a similar argument if $L_i \cap L'_j = \varnothing$ and $x_{i,j}$ is nontrivial, i.e., $b'_j<d_i$.
    In this case, we must have $b_i > b'_j$ or $d_i > d'_j$, since otherwise any $t \in \interval{b'_j, d_i}$ would be in both $L'_j$ and $L_i$.
    This implies $x_{i,j}=0$ by the form of the matrices in Theorem~\ref{theorem: hom set isomorphism}, and similarly, reversing the order shows the $j,i$ entry of the matrix representing $\varphi^\ast$ must be zero as well.
\end{proof}

In the proof, we saw a convenient property of dual maps expressed in terms of dual bases: the $i,j$ entry of $X$ is trivial if and only if the $j,i$ entry of $X^{\mathrm{T}}$ is trivial.
The indexing of dual bases above also respects standard orders: $(<_J, <_I)$ is a standard order for $(U,V)$ if and only if $(<_I, <_J)$ is a standard order for $(V^\ast, U^\ast)$.
Thus, arguments depending on standard orders also apply to transposes with the same orders.
The representation of a dual morphism by a matrix transpose can be visualized in the right side of Figure~\ref{fig: matrix representation of a morphism} by reflecting both the matrix and the bars across the main diagonal.

\section{The Isometry Theorem for Persistence Modules of Finite Type}\label{section: isometry theorem finite type}

The matrix techniques above are enough to prove the isometry theorem for persistence modules of finite type.
For this section (and the following two), we work with persistence modules indexed by $\mathbb{R}$.
In preparation for the proof, we set the following notation for interleaved persistence modules.
Let $U \cong \bigoplus_{j \in J} \mathbb{I}(L'_j)$ and $V \cong \bigoplus_{i \in I} \mathbb{I}(L_i)$, with $I$ and $J$ finite, and with $L_i = \interval{b_i, d_i}$ and $L'_j = \interval{b'_j, d'_j}$ in each case.
Suppose $\varphi \colon U \to V_{\_ + \varepsilon}$ and $\psi \colon V \to U_{\_ + \varepsilon}$ form an $\varepsilon$-interleaving.
Note that $U_{\_ + \varepsilon} \cong \bigoplus_{j \in J} \mathbb{I}(L'_j-\varepsilon)$ and $V_{\_ + \varepsilon} \cong \bigoplus_{i \in I} \mathbb{I}(L_i-\varepsilon)$, and analogously for $U_{\_ + 2\varepsilon}$ and $V_{\_ + 2\varepsilon}$; we keep this consistent use of the index sets throughout to represent the morphisms by matrices.
Give the index sets $J$ and $I$ standard orders for the pair $(U,V)$.
Let $X = \{x_{i,j}\}$ represent $\varphi$ and let $Y = \{y_{j,i}\}$ represent $\psi$; note that this means $X$ is in standard order but $Y$ is not necessarily.
Since $X$ and $Y$ also represent the shifted morphisms $\varphi_{\_ + \varepsilon} \colon U_{\_ + \varepsilon} \to V_{\_ + 2\varepsilon}$ and $\psi_{\_ + \varepsilon} \colon V_{\_ + \varepsilon} \to U_{\_ + 2\varepsilon}$, the product $YX$ represents the shift map $U \to U_{\_ + 2\varepsilon}$ and $XY$ represents the shift map $V \to V_{\_ + 2\varepsilon}$.
Performing ordered column reduction of $X$ as in the previous section, we can write the reduced matrix as $\{x'_{i,j}\} = X' = XT$, where $T$ has rows and columns indexed by $J$ and is upper triangular with 1's on the diagonal.
For any $j \in J$, let $e_j$ be the vector indexed by $J$ with a $j$ entry of $1$ and zeros for all other entries.
Finally, the last step of the proof will use the dual modules $U^\ast$ and $V^\ast$, which are indexed by $\mathbb{R}^{\mathrm{op}}$; we will follow~\cite{Bauer_Lesnick_2015} and identify $\mathbb{R}^{\mathrm{op}}$ with $\mathbb{R}$ by sending $t$ to $-t$.
With this identification, the dual maps $\varphi^\ast$ and $\psi^\ast$ form an $\varepsilon$-interleaving of $U^\ast$ and $V^\ast$.

\begin{figure}[ht]
    \centering
    \includegraphics[width=0.509\linewidth]{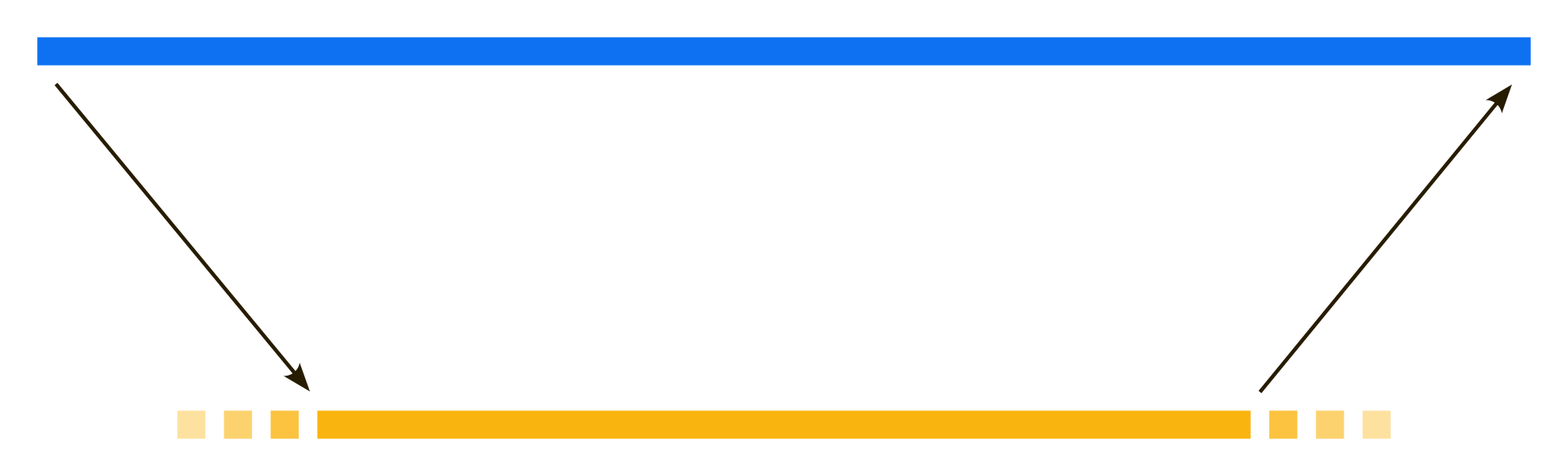}
    \caption{In an interleaving, each sufficiently long bar must survive as it is mapped into the other module and back, suggesting there must be a bar in the other module at least long enough to sustain it.
    }
    \label{fig: interleaving}
\end{figure}

The intuition for algebraic stability is shown in Figure~\ref{fig: interleaving} on a single pair of bars.
Matrix reduction will be the key to showing that all long enough bars can be matched simultaneously.

\begin{theorem}[Algebraic stability for persistence modules of finite type]\label{theorem: algebraic stability finite modules}
    Let $U$ and $V$ be persistence modules of finite type indexed by $\mathbb{R}$, and suppose $X$ represents one morphism $U \to V_{\_ + \varepsilon}$ of an $\varepsilon$-interleaving in standard order.
    Then after ordered column (or row) reduction of $X$, the positions of the nontrivial pivots form an $\varepsilon$-matching between $\barcode(U)$ and $\barcode(V)$.
\end{theorem}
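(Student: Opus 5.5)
The plan is to extract all required properties of the pivot pairs from three sources. The first is the form constraints of Theorem~\ref{theorem: hom set isomorphism}, which survive ordered operations in one direction. The second is the interleaving identity $YX \sim$ (shift). The third is duality through Lemma~\ref{lemma: dual modules and transposes}, combined with invariance of pivot positions (Lemma~\ref{lemma:same_pivots}).

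Since pivot positions agree for row and column reduction, I will use column reduction ($X'=XT$) to get the inequalities on the side of $U$. I will then apply the identical argument to $X^{\mathrm{T}}$, which represents $\varphi^\ast$ in an $\varepsilon$-interleaving of $V^\ast,U^\ast$ with the same standard orders. That yields the symmetric inequalities, with births and deaths swapped and the roles of $U,V$ exchanged. Throughout, $X$ represents a map $U\to V_{\_+\varepsilon}$, so row $i$ corresponds to the bar $\interval{b_i-\varepsilon,d_i-\varepsilon}$. The entry $(i,j)$ is trivial iff $d_i-\varepsilon\le b'_j$.

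\emph{Step 1: the birth inequality $b_i\le b'_j+\varepsilon$ at a pivot.} Column operations add column $j$ to a column $j'>j$, hence $b'_j\le b'_{j'}$. If $b_i-\varepsilon>b'_{j'}$, then $b_i-\varepsilon>b'_j$ for every $j\le j'$, so $x_{i,j}=0$ for all such $j$. It follows that $x'_{i,j'}=\sum_{j\le j'}x_{i,j}t_{j,j'}=0$. Thus the birth constraint persists in $X'$, and any nonzero entry, in particular a pivot, satisfies $b_i\le b'_j+\varepsilon$. Applying this to the transpose, where row operations preserve the death constraint because rows are ordered by decreasing death, gives $d_i\le d'_j+\varepsilon$ at each pivot.

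\emph{Step 2: the death inequality $d'_j\le d_i+\varepsilon$ at a nontrivial pivot.} Let $u=Te_j$. Since $T$ is unitriangular, $u$ has $j$-entry $1$ and is supported on indices $k\le j$, all with $b'_k\le b'_j$. Hence $[u]$ is a legitimate element of $U_t$ for $t\ge b'_j$. Because the pivot of column $j$ of $X'$ lies in row $i$, the vector $Xu=X'e_j$ is supported on rows $i'\ge i$, all of which have $d_{i'}\le d_i$. Take $t$ with $t+\varepsilon$ at or past $d_i$, that is, past every such death in $V_{\_+\varepsilon}$; concretely, $t$ just at the death $d_i-\varepsilon$. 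Then $[Xu]=0$ in $(V_{\_+\varepsilon})_t$. Applying $\psi$ gives that the shift of $[u]$ to $U_{t+2\varepsilon}$ is $[YXu]=0$. Since the $j$-entry of $u$ is $1$, this forces bar $j$ to be dead at $t+2\varepsilon$, i.e.\ $d'_j\le d_i+\varepsilon$. Here I need $t\ge b'_j$, which holds since a nontrivial pivot has $b'_j<d_i-\varepsilon$. The transpose argument gives $b'_j\le b_i+\varepsilon$. Together with Step 1, this yields condition (2) of an $\varepsilon$-matching. Condition (1)'s injectivity is automatic, since each row and each column contains at most one pivot.

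\emph{Step 3: long bars are matched.} Suppose bar $j$ contains a closed interval of length $2\varepsilon$, so that with $t=b'_j$ (or slightly after, by decoration) we have $t,t+2\varepsilon\in L'_j$. The shift of $[u]$ from $U_t$ to $U_{t+2\varepsilon}$ is nonzero because its $j$-entry is $1$ and bar $j$ is alive there. Hence $[X'e_j]=\varphi_t([u])\ne 0$. So column $j$ of $X'$ has a nonzero nontrivial entry, and its first nonzero entry precedes or equals that entry. In standard order, trivial entries in a column form a terminal segment, so the pivot is nontrivial and bar $j$ is matched. By the transpose argument every long bar of $V$ is matched as well.

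The main obstacle I expect is the careful handling of decorated endpoints in Steps 2 and 3: choosing the right $t$ so that ``dead at $d_i-\varepsilon$'' and ``alive on a closed interval of length $2\varepsilon$'' translate exactly into non-strict inequalities of cuts. I also need to check that $[u]$ is well defined even though the other entries of $u$ may belong to bars with small deaths. These entries are harmless, since only the $j$-entry's survival is used.
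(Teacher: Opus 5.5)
Your proof is correct, and its skeleton is the paper's. You column reduce $X$, test with $Te_j$, use $\psi_{t+\varepsilon}\circ\varphi_t=U_{t\le t+2\varepsilon}$, and then transpose and invoke Lemma~\ref{lemma:same_pivots} for the $V$-side conditions. Steps~1 and~3 are the paper's arguments; you spell out why the birth constraint survives ordered column operations and why the pivot is nontrivial, which the paper compresses into a sentence.

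The genuine difference is in the bound $d'_j\le d_i+\varepsilon$.
\begin{itemize}
\item The paper splits into cases. If $L'_j$ contains no closed interval of length $2\varepsilon$, the bound follows from nontriviality: $d'_j\le b'_j+2\varepsilon<d_i+\varepsilon$. Otherwise it reads off the $j$-entry of $YX'e_j$, namely $\sum_{i'}y_{j,i'}x'_{i',j}=1$. It then finds $i'\ge i$ with $y_{j,i'}\neq 0$ and uses the form constraint on $Y$.
\item You instead evaluate at times where $[X'e_j]$ has already died, and use $\ker\varphi_t\subseteq\ker U_{t\le t+2\varepsilon}$.
\end{itemize}

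Your version is uniform, with no case split, and never touches the entries of $Y$. On the $U$-side you only use that the $2\varepsilon$-structure maps of $\ker\varphi$ vanish. On the transposed side you only use the same for $\ker\varphi^\ast$, which for pointwise finite-dimensional modules is equivalent to $\coker\varphi$ having vanishing $2\varepsilon$-structure maps. So your argument actually proves the conclusion for any $\varphi\colon U\to V_{\_+\varepsilon}$ with $2\varepsilon$-trivial kernel and cokernel. That is closer in spirit to Bauer and Lesnick's induced matching theorem. The paper's version instead keeps the product $YX$ visible, in line with the rank-of-$YX$ viewpoint it borrows from Bjerkevik.

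One small repair is needed in your Step~2. When $d_i$ carries a $+$ decoration there is no ``first'' admissible $t$. So run the argument for every $t\in(d_i-\varepsilon)^\uparrow$; all such $t$ lie in $(b'_j)^\uparrow$ by nontriviality. As $t+2\varepsilon$ sweeps out $(d_i+\varepsilon)^\uparrow$, the vanishing of $[Te_j]$ there together with its $j$-entry being $1$ gives $(d'_j)^\downarrow\cap(d_i+\varepsilon)^\uparrow=\varnothing$. That is exactly $d'_j\le d_i+\varepsilon$, and the case $d_i=\infty$ is vacuous.
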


\begin{proof}
    In the notation above, let $M \subseteq I \times J$ be the matching consisting of all pairs $(i,j)$ such that the $i,j$ entry of $X'$ is a nontrivial pivot.
    We first check that any $L'_{j}$ containing a closed interval of length $2\varepsilon$, say $[t,t+2\varepsilon]$, is matched in $M$.
    The vector $Te_{j}$ represents an element of $U_t$, since by the ordering of $J$, all nonzero entries correspond to bars born before $t$.
    Since $Te_{j}$ has a nontrivial $j$ entry of $1$, the shift map sends it to an element of $U_{t+2 \varepsilon}$ that also has a nontrivial $j$ entry of $1$. 
    In particular, since $U_{t \leq t+2\varepsilon} = \psi_{t+\varepsilon} \circ \varphi_t$, we know $\varphi_t([Te_{j}]) = [XTe_{j}] = [X'e_{j}]$ must be nonzero, so the $j$ column of $X'$ must contain a nontrivial pivot.
    Therefore $L'_{j}$ is matched in $M$.

    Next, we show that for any $(i,j) \in M$, we have $\interval{b'_{j}, d'_{j}} \subseteq \interval{b_{i} - \varepsilon, d_{i} + \varepsilon}$.
    For any $(i,j) \in M$, the entry $x'_{i,j}$ is nonzero and nontrivial, so
    $b_i - \varepsilon \leq b'_j < d_i - \varepsilon \leq d'_j$;
    in particular, we get the first required bound $b_{i} - \varepsilon \leq b'_{j}$.
    If $L'_{j}$ does not contain a closed interval of length $2 \varepsilon$, then $d'_{j} \leq b'_{j} + 2 \varepsilon < d_{i} + \varepsilon$, which establishes the required bound on death times.
    On the other hand, if $L'_{j}$ does contain a closed interval $[t, t+ 2 \varepsilon]$, we again use the fact that the $j$ component of $U_{t \leq t+2\varepsilon}([Te_{j}]) = [YX'e_{j}]$ must be $1$, that is, $\sum_{i' \in I} y_{j,i'} x'_{i',j} = 1$.
    Since $x'_{i,j}$ is a pivot, $x'_{i',j} = 0$ for all $i'<i$.
    Thus, there must be some $i' \geq i$ such that $y_{j,i'} \neq 0$, and since $Y$ represents the map $V \to U_{\_+\varepsilon}$, this implies $d'_{j} - \varepsilon \leq d_{i'}$.
    Since $i' \geq i$, we further have $d_{i'} \leq d_{i}$, so $d'_{j} \leq d_{i} + \varepsilon$ as required.

    Finally, we dualize using Lemma~\ref{lemma: dual modules and transposes} to check the remaining conditions for $M$ to be an $\varepsilon$-matching. 
    The transposed matrices $X^{\mathrm{T}}$ and $Y^{\mathrm{T}}$ represent the morphisms of an $\varepsilon$-interleaving of the dual modules $U^\ast$ and $V^\ast$.
    Ordered column reduction of $X^{\mathrm{T}}$ corresponds to ordered row reduction $X$, so by Lemma~\ref{lemma:same_pivots}, the pivot positions obtained by column reducing $X^{\mathrm{T}}$ are the $j,i$ positions such that $(i,j) \in M$.
    Applying the results above with $X^{\mathrm{T}}$ in place of $X$ shows that each $L_i$ containing a closed interval of length $2\varepsilon$ is matched in $M$ and each pair $(i,j) \in M$ satisfies $\interval{b_{i}, d_{i}} \subseteq \interval{b'_{j} - \varepsilon, d'_{j} + \varepsilon}$.
\end{proof}

The use of column reduction in the proof is suspiciously similar to its use in the persistent homology algorithm; we will look at the reason for this similarity in Section~\ref{subsection: comparison with algebraic stability}.
We finish the remaining converse direction of the isometry theorem for persistence modules of finite type in the proof below.
The method is well known (see~\cite{Lesnick_2015_interleaving, Bauer_Lesnick_2015,chazal2016structure}, for instance), and we present it here in the language of matrix algebra.

\begin{theorem}[Isometry theorem for persistence modules of finite type]\label{theorem:isometry theorem for finite modules}
    Let $U$ and $V$ be persistence modules of finite type indexed by $\mathbb{R}$.  
    There exists an $\varepsilon$-interleaving between $U$ and $V$ if and only if there exists an $\varepsilon$-matching between $\barcode(U)$ and $\barcode(V)$.  
    This implies
    \[
    d_I(U,V) = d_B(\barcode(U), \barcode(V)).
    \]
\end{theorem}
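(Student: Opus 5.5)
The forward direction is exactly Theorem~\ref{theorem: algebraic stability finite modules}. Given an $\varepsilon$-interleaving, I take the matrix $X$ representing $\varphi \colon U \to V_{\_+\varepsilon}$ in standard order and apply ordered column reduction. The positions of the nontrivial pivots then give an $\varepsilon$-matching, so nothing further is needed there.

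For the converse, suppose $M \subseteq I \times J$ is an $\varepsilon$-matching. I will write down explicit matrices $X = \{x_{i,j}\}$ and $Y = \{y_{j,i}\}$ defining the two interleaving morphisms:
\begin{itemize}
\item set $x_{i,j} = y_{j,i} = 1$ for $(i,j) \in M$;
\item set all other entries to zero.
\end{itemize}

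\textbf{Step 1: $X$ and $Y$ are admissible.} By Theorem~\ref{theorem: hom set isomorphism}, $X$ represents a morphism $U \to V_{\_+\varepsilon}$ provided no nonzero entry violates the constraints. The codomain has bars $L_i - \varepsilon = \interval{b_i-\varepsilon, d_i-\varepsilon}$, so for $(i,j)\in M$ we need $b_i - \varepsilon \le b'_j$ and $d_i - \varepsilon \le d'_j$. Both inequalities follow from condition (2) of an $\varepsilon$-matching, since $\interval{b_i,d_i} \subseteq \interval{b'_j-\varepsilon, d'_j+\varepsilon}$ gives $b'_j - \varepsilon \le b_i$ and $d_i \le d'_j+\varepsilon$. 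Here I need to be slightly careful with decorated endpoints and the lexicographic order, but inclusion of intervals translates directly into inequalities of cuts. The argument for $Y$ is symmetric.

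\textbf{Step 2: $YX$ represents the shift map $U \to U_{\_+2\varepsilon}$.} That shift map is represented by the identity matrix on $J$. The product $YX$ is diagonal, with $(j,j)$ entry $1$ if $j$ is matched and $0$ otherwise. So I must show that for an unmatched $j$, the $(j,j)$ entry of the identity is trivial for the pair $(U, U_{\_+2\varepsilon})$, i.e.\ $d'_j - 2\varepsilon \le b'_j$. This holds because an unmatched $L'_j$ contains no closed interval of length $2\varepsilon$, by condition (1). This is the step where the endpoint decorations matter most: I must check that "contains no closed interval $[t,t+2\varepsilon]$" is equivalent to $d'_j \le b'_j + 2\varepsilon$ as cuts. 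It is, by casework on $\pm$ decorations: for example, $[s,t)$ with $t-s = 2\varepsilon$ contains no such interval, and correspondingly $d = t^- \le s^- + 2\varepsilon$.

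With that verified, the matrices agree up to trivial entries, so $[YX]$ is the shift. The same argument gives $[XY]$ as the shift on $V$. Composition corresponding to matrix multiplication is part of Theorem~\ref{theorem: hom set isomorphism}, so this completes the converse.

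\textbf{Distance equality.} Since $\varepsilon$-interleavings and $\varepsilon$-matchings exist for exactly the same values of $\varepsilon$, the infima in the definitions of $d_I$ and $d_B$ are taken over identical sets. Hence $d_I(U,V) = d_B(\barcode(U), \barcode(V))$.

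I expect the only real obstacle to be the decorated-endpoint bookkeeping in Steps 1 and 2.
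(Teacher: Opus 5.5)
Your proposal is correct and is essentially the paper's proof: the forward direction is Theorem~\ref{theorem: algebraic stability finite modules}, and the converse uses the $0/1$ matrix of the matching and its transpose, checking that both diagonal products agree with the identity up to trivial entries (you spell out the triviality criterion $d'_j-2\varepsilon\le b'_j$ more explicitly than the paper does). One small slip in Step~1: the bound $b_i-\varepsilon\le b'_j$ needed for $X$ comes from the other inclusion $\interval{b'_j,d'_j}\subseteq\interval{b_i-\varepsilon,d_i+\varepsilon}$, whereas the inequality $b'_j-\varepsilon\le b_i$ you derived is the one needed for $Y$, so each of $X$ and $Y$ uses one endpoint bound from each half of condition~(2).
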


\begin{proof}
    Half of the proof is accomplished by Theorem~\ref{theorem: algebraic stability finite modules}, which constructs a matching using the pivots of a matrix.
    For the converse, we construct an interleaving from the simplest matrices with chosen pivots. 
    Given an $\varepsilon$-matching between $\{L_i\}_{i \in I}$ and $\{L'_j\}_{j \in J}$, define $Z = \{z_{i,j}\}_{i \in I, j \in J}$ by setting $z_{i,j} = 1$ if $L_i$ is matched to $L'_j$ and letting all other entries be zero.
    Then $Z$ represents a morphism $U \to V_{\_ + \varepsilon}$, since the $\varepsilon$-matching conditions assure $b_i - \varepsilon \leq b'_j$ and $d_i - \varepsilon \leq d'_j$ if $L_i$ and $L'_j$ are matched.
    Similarly, $Z^{\mathrm{T}}$ represents a morphism $V \to U_{\_ + \varepsilon}$.
    The composition $U \to V_{\_ + \varepsilon} \to U_{\_ + 2\varepsilon}$ is represented by $Z^{\mathrm{T}} Z$, which has a $j,j$ entry of $1$ for each matched $L'_j$ and has zeros for all other entries.
    Since each $L'_j$ that contains a closed interval of length $2 \varepsilon$ is matched, $Z^{\mathrm{T}} Z$ represents the shift map $U \to U_{\_ + 2 \varepsilon}$.
    Similarly, $Z Z^{\mathrm{T}}$ represents the shift map $V \to V_{\_ + 2 \varepsilon}$, so the maps represented by $Z$ and $Z^{\mathrm{T}}$ define an $\varepsilon$-interleaving.
\end{proof}

It is worth comparing the proof of Theorem~\ref{theorem: algebraic stability finite modules} to the closely related methods of~\cite{bjerkevik_stability} and~\cite{Bauer_Lesnick_2015}.
Our use of $YX$ to represent the shift map is similar to the approach of~\cite{bjerkevik_stability} (which uses multiparameter persistence modules).
The proof of~\cite[Lemma~4.9]{bjerkevik_stability} uses the rank of the matrix representing the shift map to bound counts of matchable bars.
The proof of Theorem~\ref{theorem: algebraic stability finite modules} above can thus be understood as realizing this rank-based approach in terms of pivots.
The connection with~\cite{Bauer_Lesnick_2015} will be clearer after Section~\ref{section: computations}, as we show in Theorem~\ref{theorem: barcodes of images} that column reduction of a matrix can be used to compute the image of a morphism.
In particular, the factorization of a morphism through its image in~\cite[Section~5]{Bauer_Lesnick_2015} is replaced by column reduction of a matrix in Theorem~\ref{theorem: algebraic stability finite modules}. 
See Section~\ref{subsection: comparison with algebraic stability} for further discussion.
Similarly to the induced matchings of~\cite{Bauer_Lesnick_2015}, Theorem~\ref{theorem: algebraic stability finite modules} constructs a matching from just one morphism of an interleaving.
However, the two methods are not equivalent, as we now check.

\begin{example}
    The following simple example shows that the matchings given by Theorem~\ref{theorem: algebraic stability finite modules} can differ from those constructed in~\cite{Bauer_Lesnick_2015}.
    Let $U = \mathbb{I}(L'_1) \oplus \mathbb{I}(L'_2)$ and $V = \mathbb{I}(L_1) \oplus \mathbb{I}(L_2)$ with
    \begin{align*}
        \hspace{4cm} L'_1 &= [0,5) & L_1 &= [1,5)\hspace{4cm}\\
        \hspace{4cm}L'_2 &= [0,4) & L_2 &= [1,4). \hspace{4cm}
    \end{align*}
    We have $d_I(U,V) = d_B(\barcode(U), \barcode(V)) = 1$, and a pair of maps $\varphi \colon U \to V_{\_+1}$ and $\psi \colon V \to U_{\_+1}$ form a $1$-interleaving if and only if they are represented by a pair of inverse matrices.
    Supposing they form an interleaving, we must have $\im \varphi = V$.
    The matching induced by $\varphi$ as defined in~\cite{Bauer_Lesnick_2015} only depends on this image: it will always match $L'_1$ to $L_1$ and $L'_2$ to $L_2$ (see in particular~\cite[Proposition~5.4]{Bauer_Lesnick_2015}).
    However, a matching constructed from $\varphi$ using Theorem~\ref{theorem: algebraic stability finite modules} depends on the matrix $X = \{x_{i,j}\}$ representing $\varphi$.
    If $x_{1,1} = 0$, then ordered column reduction produces pivots in the off-diagonal positions, matching $L'_1$ to $L_2$ and $L'_2$ to $L_1$.
    If $x_{1,1} \neq 0$, then the pivots are in the diagonal positions, matching $L'_1$ to $L_1$ and $L'_2$ to $L_2$.
\end{example}

The isometry theorem is typically stated for q-tame modules, a more general setting than we have considered so far.
Sections~\ref{section: isometry theorem locally finite} and~\ref{section: isometry theorem q-tame} provide a proof of the isometry theorem for q-tame modules beginning with the results of this section for persistence modules of finite type. These sections do not use matrix algebra or the method used for Theorem~\ref{theorem: algebraic stability finite modules}, but instead, the proofs proceed by comparing more general persistence modules to modules of finite type.
Readers who would prefer to continue to further matrix algebra and persistent homology computations can safely skip these sections, as they are not used in the rest of the paper.

\section{The Isometry Theorem for Locally Finite Modules}\label{section: isometry theorem locally finite}

While the main purpose of this paper has been to give the matrix-based proof in the previous section, this section and the next show that more general versions of the isometry theorem can be proved using modules of finite type as a starting point.
I previously wrote about the methods used here in~\cite{moy2024persistence}.
This section generalizes to locally finite modules, and the next section generalizes to q-tame modules.

We will use the following operation that restricts a persistence module's support: given a persistence module $V$ indexed by $\mathbb{R}$ and an interval $\mathcal{I} \subseteq \mathbb{R}$, we define $V|_\mathcal{I}$ by letting $(V|_\mathcal{I})_t = V_t$ if $t \in \mathcal{I}$ and letting $(V|_\mathcal{I})_t = 0$ otherwise.
For $s \leq t$, we let $(V|_\mathcal{I})_{s \leq t} = V_{s \leq t}$ if $s,t \in \mathcal{I}$, and let $(V|_\mathcal{I})_{s \leq t}$ be the zero map otherwise.
Restricting the support respects direct sums, as they are defined pointwise.

The intuition for extending the algebraic stability theorem to locally finite persistence modules is straightforward.
Restricting the support of a pair of interleaved locally finite modules to any finite interval will give interleaved modules of finite type, so we can apply our existing version of algebraic stability, Theorem~\ref{theorem: algebraic stability finite modules}.
We should then be able to piece together matchings on a sequence of growing intervals to obtain a matching for the original pair of modules.
The proof below follows this outline, with some of the verifications left to the reader.

\begin{theorem}[The Isometry Theorem for Locally Finite Persistence Modules]\label{theorem:isometry_theorem_for_locally_finite}
For locally finite persistence modules $U$ and $V$ indexed by $\mathbb{R}$, there exists an $\varepsilon$-interleaving between $U$ and $V$ if and only if there exists an $\varepsilon$-matching between $\barcode(U)$ and $\barcode(V)$.
This implies
    \[
    d_I(U,V) = d_B(\barcode(U), \barcode(V)).
    \]
\end{theorem}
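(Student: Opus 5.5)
The converse direction (matching $\Rightarrow$ interleaving) I would handle exactly as in Theorem~\ref{theorem:isometry theorem for finite modules}. The matrix $Z$ with $1$'s at matched pairs still defines a morphism componentwise, even with infinitely many bars. Local finiteness guarantees that each $U_t$ and $V_t$ is finite dimensional, so each component $\varphi_t$ involves only finitely many bars. Equivalently, I define the morphism bar-by-bar via Lemma~\ref{lemma:morphisms_of_interval_modules}, using the maps $\mathbb{I}(L'_j) \to \mathbb{I}(L_i - \varepsilon)$ given by $1$ for matched pairs. The verification that both composites are shift maps is then again bar-by-bar: each bar containing a closed interval of length $2\varepsilon$ is matched, and every other bar's shift map is zero. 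So the real work is algebraic stability.

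For algebraic stability, fix an $\varepsilon$-interleaving $\varphi, \psi$ and consider the intervals $\mathcal{I}_n = [-n,n]$. I would define truncated modules $U^n = U|_{[-n-\varepsilon?,\,\cdot]}$. The problem is that restriction does not commute with shifts, so restricted morphisms need not form an interleaving. To avoid this, I would instead restrict and choose the truncation so that the interleaving survives. Concretely, I would take $U^n = U|_{[-n,\infty)}$ and $V^n = V|_{[-n,\infty)}$ restricted also above $n$ by collapsing the tail. A cleaner choice is to use the restriction to $(-\infty, n)$ composed with restriction to $[-n, \infty)$, checking that $\varphi$ and $\psi$ induce morphisms between the restrictions. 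The subtlety is that the restriction to an interval $\mathcal{I}$ is a quotient of the restriction to an up-set and a submodule of the restriction to a down-set. Morphisms $U|_{\mathcal I} \to V_{\_+\varepsilon}|_{\mathcal I - \varepsilon}$ exist when the intervals are compatible. I expect to get an $\varepsilon$-interleaving between $U|_{\mathcal{I}_n}$ and $V|_{\mathcal{I}_n}$ at least on a slightly smaller window, enough that bars meeting $[-n+2\varepsilon, n-2\varepsilon]$ are correctly treated. Each restricted module is of finite type by local finiteness, and its barcode is $\{L'_j \cap \mathcal{I}_n\}$ minus empty bars. Theorem~\ref{theorem: algebraic stability finite modules} then gives an $\varepsilon$-matching $M_n$ of the truncated barcodes, which I regard as a partial matching $M_n \subseteq I \times J$.

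Next I would pass to a limit. For each bar, only finitely many bars of the other module can be matched to it in any $M_n$: the $\varepsilon$-conditions force endpoints within $\varepsilon$ (away from the truncation boundary), and local finiteness bounds the number of such bars. Since $I$ and $J$ are countable, a diagonal (König/compactness) argument yields a subsequence along which, for every $i$, the partner of $i$ in $M_n$ (or the fact that $i$ is unmatched) is eventually constant, and likewise for every $j$. The limiting relation $M$ is then a matching. Each pair in $M$ belongs to $M_n$ for $n$ large relative to both bars, so the containment conditions hold for the truncations. Letting $n \to \infty$, where the truncations exhaust the bars, gives the untruncated conditions. A bar containing $[t, t+2\varepsilon]$ has a truncation containing it for large $n$, so it is matched in every late $M_n$, and hence in $M$.

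I expect the main obstacle to be the truncation step. I need restricted modules that are honestly $\varepsilon$-interleaved, with bars near the window boundary behaving so that the finite-type matchings satisfy the original conditions in the limit. My first attempt would be to restrict to an up-set $[-n,\infty)$, which preserves morphisms up to a shift of the domain set, and then to kill the tail beyond $n$ by replacing bars with $\interval{b, \min(d,n^-)}$-type truncations. I would check directly that $\varphi$ and $\psi$ descend to these modules, since the truncations are functorial quotient/sub constructions. Boundary effects are confined to a $2\varepsilon$-neighborhood of $\pm n$, which disappears in the limit.
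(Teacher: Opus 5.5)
Your outline is the paper's: restrict to bounded windows, apply Theorem~\ref{theorem: algebraic stability finite modules} to the finite-type restrictions, glue the matchings by compactness, and build the converse interleaving summand by summand. The one real gap is the step you flag as the main obstacle. You leave it unresolved, and its premise is mistaken. Restricting to an interval already gives an honest $\varepsilon$-interleaving, with no shrinking of the window.

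For any interval $\mathcal{I}$, set $\varphi'_t=\varphi_t$ if $t,t+\varepsilon\in\mathcal{I}$ and $\varphi'_t=0$ otherwise, and define $\psi'$ the same way. The check is short:
\begin{enumerate}
\item \emph{Naturality.} For $s\le t$, the naturality square of $\varphi'$ starts or ends at a zero space unless $s,t+\varepsilon\in\mathcal{I}$. In that case convexity puts $s+\varepsilon$ and $t$ in $\mathcal{I}$ too, so the square is the original one.
\item \emph{Interleaving identity.} Both $\psi'_{t+\varepsilon}\circ\varphi'_t$ and $(U|_{\mathcal{I}})_{t\le t+2\varepsilon}$ vanish unless $t,t+2\varepsilon\in\mathcal{I}$. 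In that case $t+\varepsilon\in\mathcal{I}$, and the identity is inherited from $U$.
\end{enumerate}
This is exactly the interleaving the paper uses. Your fallback of an interleaving valid only ``on a slightly smaller window'' would not suffice. Theorem~\ref{theorem: algebraic stability finite modules} needs a genuine $\varepsilon$-interleaving of finite-type modules, and your limit argument relies on each $M_n$ being a genuine $\varepsilon$-matching of the truncated barcodes. Your first attempt (up-set restriction as a submodule, then down-set restriction as a quotient) would, if carried out, produce precisely $\varphi'$ and $\psi'$. All boundary effects are then absorbed by the limit step, which you handle correctly.

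Your gluing argument differs from the paper's but is sound. The paper uses only that each $\mathcal{M}(\mathcal{I})$ is finite and nonempty, together with the restriction maps $\mathcal{M}(\mathcal{I}'\supseteq\mathcal{I})$. Their images stabilize, so it can choose compatible matchings $M_1\subseteq M_2\subseteq\cdots$ whose union is the desired matching. Your diagonal subsequence instead takes a pointwise limit of partner assignments. That needs your extra observation that each bar has only finitely many possible partners across all $M_n$, which does follow from condition (2) and local finiteness, as you indicate.

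One small point in the converse: a matched pair can have $b'_j\ge d_i-\varepsilon$. There $\Hom(\mathbb{I}(L'_j),\mathbb{I}(L_i-\varepsilon))=0$, so the summand map must be zero rather than ``given by $1$''. The paper's pointwise definition (identity when $t\in L'_j$ and $t+\varepsilon\in L_i$, zero otherwise) avoids this. For matched bars containing $[t,t+2\varepsilon]$, the matching conditions give $t+\varepsilon\in L_i$, so the composite is the shift as you claim.
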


\begin{proof}
Suppose $U = \bigoplus_{k \in K} \mathbb{I}(L'_{k})$ and $V = \bigoplus_{j \in J} \mathbb{I}(L_j)$ are locally finite persistence modules, and fixing $\varepsilon>0$, suppose $\varphi \colon U \to V_{\_+\varepsilon}$ and $\psi \colon V \to U_{\_ + \varepsilon}$ define an $\varepsilon$-interleaving.
Then for any interval $\mathcal{I} \subseteq \mathbb{R}$, we get an $\varepsilon$-interleaving of $U|_\mathcal{I}$ and $V|_\mathcal{I}$ in which the $t$ components are $\varphi_t$ and $\psi_t$ whenever $t$ and $t+\varepsilon$ are both in $\mathcal{I}$ and otherwise are zero maps.
Letting
\[
J(\mathcal{I}) = \{ j \in J \mid L_j \cap \mathcal{I} \neq \varnothing\}
\]
\[
K(\mathcal{I}) = \{ k \in K \mid L'_{k} \cap \mathcal{I} \neq \varnothing\},
\]
we have interval decompositions $U|_\mathcal{I} \cong \bigoplus_{k \in K(\mathcal{I})} \mathbb{I}(L'_{k} \cap \mathcal{I})$ and $V|_\mathcal{I} \cong \bigoplus_{j \in J(\mathcal{I})} \mathbb{I}(L_j \cap \mathcal{I})$.
Let $\mathcal{M}(\mathcal{I})$ be the set of $\varepsilon$-matchings $M \subseteq J(\mathcal{I}) \times K(\mathcal{I})$ between $\dgm(V|_\mathcal{I})$ and $\dgm(U|_\mathcal{I})$\footnote{For vacuous cases, note that the empty set may be an $\varepsilon$-matching and can be in $\mathcal{M}(\mathcal{I})$ even if $J(\mathcal{I})$ or $K(\mathcal{I})$ is empty.}.
For any bounded $\mathcal{I}$, since $U$ and $V$ are locally finite, $U|_\mathcal{I}$ and $V|_\mathcal{I}$ are of finite type, so Theorem~\ref{theorem: algebraic stability finite modules} shows $\mathcal{M}(\mathcal{I})$ is nonempty.

We now use matchings on a growing sequence of intervals to find a matching between $\dgm(V)$ and $\dgm(U)$.
For intervals $\mathcal{I} \subseteq \mathcal{I}'$, we get a function $\mathcal{M}(\mathcal{I}' \supseteq \mathcal{I}) \colon \mathcal{M}(\mathcal{I}') \to \mathcal{M}(\mathcal{I})$ defined by 
\[
\mathcal{M}(\mathcal{I}' \supseteq \mathcal{I})(M) = M \cap (J(\mathcal{I}) \times K(\mathcal{I})).
\]
Let $\mathcal{I}_1 = [-1,1]$.
If $\mathcal{I}_1 \subseteq \mathcal{I} \subseteq \mathcal{I}'$, then $\im \mathcal{M}(\mathcal{I}' \supseteq \mathcal{I}_1) \subseteq \im \mathcal{M}(\mathcal{I} \supseteq \mathcal{I}_1)$, that is, the image can only shrink as the interval grows.
Since $\mathcal{M}(\mathcal{I}_1)$ is finite, the image can only shrink finitely many times, so there exists a bounded $\mathcal{I}_2$ containing $\mathcal{I}_1$ such that for any bounded $\mathcal{I}$ containing $\mathcal{I}_2$, $\im \mathcal{M}(\mathcal{I} \supseteq \mathcal{I}_1) = \im \mathcal{M}(\mathcal{I}_2 \supseteq \mathcal{I}_1)$.
Repeat to recursively define a sequence of bounded intervals $\mathcal{I}_n$ such that for each $n$, $\mathcal{I}_{n} \subseteq \mathcal{I}_{n+1}$ and for any $\mathcal{I}$ containing $\mathcal{I}_{n+1}$, $\im \mathcal{M}(\mathcal{I} \supseteq \mathcal{I}_n) = \im \mathcal{M}(\mathcal{I}_{n+1} \supseteq \mathcal{I}_n)$.
We may further require $[-n,n] \subseteq \mathcal{I}_n$ for each $n$, since after each step, we can expand the interval as needed.
We have seen above that all $\mathcal{M}(\mathcal{I}_n)$ are nonempty, so choose $M_1 \in \im \mathcal{M}(\mathcal{I}_2 \supseteq \mathcal{I}_1)$.
Since $\im \mathcal{M}(\mathcal{I}_{n+2} \supseteq \mathcal{I}_n) = \im \mathcal{M}(\mathcal{I}_{n+1} \supseteq \mathcal{I}_n)$ for each $n$, we can recursively choose $M_{n+1} \in \im \mathcal{M}(\mathcal{I}_{n+2} \supseteq \mathcal{I}_{n+1})$ such that $\mathcal{M}(\mathcal{I}_{n+1} \supseteq \mathcal{I}_n)(M_{n+1}) = M_n$.
This gives a sequence of matchings $M_1 \subseteq M_2 \subseteq M_3 \dots$ that account for increasingly large intervals, and it can be checked that $\bigcup_n M_n$ is an $\varepsilon$-matching between $\dgm(V)$ and $\dgm(U)$.

We have thus shown that if locally finite modules $U$ and $V$ are $\varepsilon$-interleaved, then there is an $\varepsilon$-matching between $\dgm(U)$ and $\dgm(V)$: this is algebraic stability for locally finite modules.
For the converse, the well-known method used in the proof of Theorem~\ref{theorem:isometry theorem for finite modules} still applies, although the finite matrices there are no longer sufficient to describe the morphisms\footnote{Section~\ref{subsection: infinite barcodes} defines infinite matrices that can be used to give an equivalent proof.}.
Instead, given an $\varepsilon$-matching $M \subseteq J \times K$, we define a morphism $\varphi^M \colon U \to V_{\_+\varepsilon}$ by letting the composition of $\varphi^M_t$ with inclusion and projection maps
\[
\mathbb{I}(L'_k)_{t} \to U_t \xrightarrow[]{\varphi^M_t} V_{t + \varepsilon} \to \mathbb{I}(L_j)_{t + \varepsilon}
\]
be the identity when $(j,k) \in M$, $t \in L'_k$, and $t + \varepsilon \in L_j$ and letting it be the zero map otherwise.
Defining $\psi^M \colon V \to U_{\_+\varepsilon}$ analogously, $\varphi^M$ and $\psi^M$ define an $\varepsilon$-interleaving since the interleaving conditions are satisfied on interval module summands.
\end{proof}

\section{The Isometry Theorem for Q-tame modules}\label{section: isometry theorem q-tame}

Generalizing the isometry theorem to q-tame modules indexed by $\mathbb{R}$ requires an appropriate definition of persistence diagrams for q-tame modules and some basic properties.
Those familiar with~\cite{chazal2016structure} can use the definitions there and find the proofs of the facts in the lemma below.
We give an equivalent definition that will allow us to quickly prove the necessary results.

Following a key theme in~\cite[Section~4.5]{chazal2016structure}, we approximate q-tame modules by locally finite modules.
Namely, given a q-tame module $V$ and an $\varepsilon>0$, the \emph{$\varepsilon$-smoothing} $V^\varepsilon$ is defined by $V^\varepsilon_t = \im V_{t-\varepsilon \leq t+ \varepsilon}$, with structure maps the restrictions of those of $V$.
Q-tameness of $V$ implies $V^\varepsilon$ is pointwise finite dimensional (in fact, it is locally finite, as we will show below), so $V^\varepsilon$ is interval decomposable (Theorem~\ref{theorem: pfd implies interval decomposable}) and thus has well-defined barcodes and persistence diagrams.
For an interval-decomposable $V \cong \bigoplus_{i \in I} \mathbb{I}\interval{b_i,d_i}$, we have $V^\varepsilon \cong \bigoplus_{i \in I^\varepsilon} \mathbb{I}\interval{b_i+\varepsilon,d_i-\varepsilon}$, where $I^\varepsilon$ consists of those $i \in I$ such that $\interval{b_i,d_i}$ contains a closed interval of length $2 \varepsilon$.
To counteract this shrinking of intervals, we define a shifted diagram of an interval-decomposable module:
let $\overline{b_i}, \overline{d_i} \in \mathbb{R}\cup\{\pm \infty\}$ be obtained by removing decorations from $b_i$ and $d_i$ and define
\[
\dgm^\varepsilon \left( \bigoplus_{i \in I} \mathbb{I}\interval{b_i,d_i} \right) = \left\{ \left( \overline{b_i} - \varepsilon, \overline{d_i} + \varepsilon\right) \mid i \in I, \overline{b_i} < \overline{d_i}\right\}.
\]
For any q-tame $V$ and any $\varepsilon_2 > \varepsilon_1 > 0$, these shifted diagrams satisfy $\dgm^{\varepsilon_2}(V^{\varepsilon_2}) \subseteq \dgm^{\varepsilon_1}(V^{\varepsilon_1})$, since $V^{\varepsilon_2} = (V^{\varepsilon_1})^{\varepsilon_2 - \varepsilon_1}$.
Furthermore, all points in $\dgm^{\varepsilon_1}(V^{\varepsilon_1})$ with difference between birth and death times greater than $2\varepsilon_2$ are in $\dgm^{\varepsilon_2}(V^{\varepsilon_2})$.
We thus define\footnote{Here we use the evident definition of the union of a collection of multisets, in which the multiplicity of an element of the union is its maximum multiplicity in any multiset in the collection (here guaranteed to be finite).
Using indexed multisets, this union can be indexed by the colimit of the index sets of the shifted diagrams.
} $\dgm(V) = \bigcup_{\varepsilon>0} \dgm^{\varepsilon}(V^{\varepsilon})$.
This agrees with the definition given in~\cite{chazal2016structure} by~\cite[Proposition~4.16]{chazal2016structure}.
Using this definition, we record some facts about q-tame modules and smoothings in the following lemma.

\begin{lemma}\label{lemma: properties of smoothings}
    Let $V$ be a q-tame persistence module indexed by $\mathbb{R}$ and let $\varepsilon>0$.
    Then the following hold.
    \begin{enumerate}
        \item\label{item: smoothings are locally finite} $V^\varepsilon$ is locally finite
        \item\label{item: bound on d_I for smoothings} $d_I(V,V^\varepsilon) \leq \varepsilon$
        \item\label{item: bound on d_B for smoothings}  $d_B(\dgm(V), \dgm(V^\varepsilon)) \leq \varepsilon$
    \end{enumerate}
\end{lemma}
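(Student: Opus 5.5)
For item~(\ref{item: smoothings are locally finite}), I would first show that $V^\varepsilon$ is pointwise finite dimensional, hence interval decomposable by Theorem~\ref{theorem: pfd implies interval decomposable}, and then bound how many bars meet a bounded interval. Pointwise finiteness is immediate: $V^\varepsilon_t = \im V_{t-\varepsilon \leq t+\varepsilon}$ has finite dimension by q-tameness. Now let $V^\varepsilon \cong \bigoplus_{i} \mathbb{I}(L_i)$ and fix a bounded interval $[a,c]$.

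The key observation is that every bar of $V^\varepsilon$ has length bounded below. If $t \in L_i$, then the bar must extend well past $t$ in one direction. Indeed, the element $v \in V^\varepsilon_t$ is the image of some $w \in V_{t-\varepsilon}$. Hence for any $s \in [t-\varepsilon, t]$, the vector $v$ lies in the image of $V^\varepsilon_s \to V^\varepsilon_t$: we take $V_{t-\varepsilon\leq s+\varepsilon}$ applied to $w$, and it lies in $V^\varepsilon_{s}$ whenever $s+\varepsilon \ge$ \dots. I expect the simplest clean route is different, though. For a grid $t_0 < t_1 < \dots < t_N$ covering $[a,c]$ with mesh less than $\varepsilon$, I would bound the number of bars meeting $[t_k, t_{k+1}]$ by the rank-type quantity $\dim V^\varepsilon_{t_k} + \dim V^\varepsilon_{t_{k+1}}$ plus the number of bars contained entirely inside $(t_k,t_{k+1})$.

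The remaining case to exclude is a bar of $V^\varepsilon$ contained in an open interval of length less than $\varepsilon$. For this I would use $V^\varepsilon_{s\le t} \ne 0$ on such a bar. A nonzero element at time $t$ comes from $V_{t-\varepsilon}$, and the natural factorization $V_{t-\varepsilon} \to V^\varepsilon_{t'} \to V^\varepsilon_t$ holds for all $t' \in [t-\varepsilon... ]$. More cleanly, $V^{\varepsilon}_t \subseteq \im(V^{\varepsilon/2 \text{-type}})$, so that, via the map $V^\varepsilon_{t-\varepsilon/2}\to V^\varepsilon_t$, it is surjective onto\dots. I expect this to be the main obstacle: I plan to show that $V^\varepsilon_{s\le t}$ is surjective when $t-s$ is small relative to $\varepsilon$ is false in general. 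So instead I would apply Crawley-Boevey only to $V^{\varepsilon/2}$ and write $V^\varepsilon=(V^{\varepsilon/2})^{\varepsilon/2}$. Then bars of $V^\varepsilon$ are shrinkings of bars of $V^{\varepsilon/2}$ containing closed intervals of length $\varepsilon$. Such a bar meets $[a,c]$ only if the original bar contains some $[t-\varepsilon/2,t+\varepsilon/2]$ with $t\in[a,c]$, hence contains a grid point pair $t_k<t_{k}+\varepsilon/4$ region. The number of such bars is at most $\sum_k \operatorname{rank} V^{\varepsilon/2}_{t_k\le t_k+\varepsilon/4} < \infty$, which is finite by q-tameness of $V^{\varepsilon/2}$. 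Here $V^{\varepsilon/2}$ is itself q-tame since it is a submodule-type restriction of $V$'s maps.

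For item~(\ref{item: bound on d_I for smoothings}), I would give explicit maps $\varphi\colon V \to V^\varepsilon_{\_+\varepsilon}$ with $\varphi_t = V_{t\le t+2\varepsilon}$, which lands in $\im V_{t \le t+2\varepsilon} = V^\varepsilon_{t+\varepsilon}$, and $\psi\colon V^\varepsilon \to V_{\_+\varepsilon}$ the inclusion followed by the structure map. The two composites are restrictions of $V_{t\le t+2\varepsilon}$, which checks the interleaving conditions directly.

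For item~(\ref{item: bound on d_B for smoothings}), I would match each point of $\dgm(V^\varepsilon)$, whose shifted copy lies in $\dgm^\varepsilon(V^\varepsilon)\subseteq\dgm(V)$, to the corresponding point of $\dgm(V)$. The coordinates differ by exactly $\varepsilon$. The unmatched points of $\dgm(V)$ are not in $\dgm^\varepsilon(V^\varepsilon)$, so they have length at most $2\varepsilon$ by the stated containment property. Multiplicity bookkeeping follows from the union definition, and this gives an $\varepsilon$-matching of diagrams.
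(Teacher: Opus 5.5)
Your final argument is correct and is essentially the paper's. For (1) the paper likewise writes $V^\varepsilon=(V^{\varepsilon/2})^{\varepsilon/2}$, but it skips the grid: any bar of $V^\varepsilon$ meeting $[t-\frac{\varepsilon}{2},t+\frac{\varepsilon}{2}]$ comes from a bar of $V^{\varepsilon/2}$ containing $t$, and there are only $\dim V^{\varepsilon/2}_t<\infty$ of those; your interleaving in (2) (where $\psi_t$ is just the inclusion $V^\varepsilon_t\subseteq V_{t+\varepsilon}$, with no structure map needed) and your matching in (3) are exactly the paper's, though you should delete the abandoned opening of (1), since its claim that bars of $V^\varepsilon$ have length bounded below is false (for $V=\mathbb{I}[0,2\varepsilon]$ one gets $V^\varepsilon=\mathbb{I}[\varepsilon,\varepsilon]$, a one-point bar), which is exactly why the detour through $V^{\varepsilon/2}$ is needed.
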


\begin{proof}
    For (\ref{item: smoothings are locally finite}), we have $V^\varepsilon = (V^{\frac{\varepsilon}{2}})^{\frac{\varepsilon}{2}}$, so an interval in $\barcode(V^\varepsilon)$ intersects $[t - \frac{\varepsilon}{2}, t+ \frac{\varepsilon}{2}]$ only if the corresponding interval in $\barcode(V^{\frac{\varepsilon}{2}})$ contains $t$.
    Since $V$ is q-tame, $V^{\frac{\varepsilon}{2}} = \im V_{t -\frac{\varepsilon}{2} \leq t+\frac{\varepsilon}{2}}$ is finite dimensional, so we conclude that only finitely many intervals in $\barcode(V^\varepsilon)$ intersect $[t - \frac{\varepsilon}{2}, t+ \frac{\varepsilon}{2}]$.
    For (\ref{item: bound on d_I for smoothings}), the collection of maps $V_{t -\varepsilon \leq t+ \varepsilon} \colon V_{t - \varepsilon} \to V^\varepsilon_t$ and the collection of inclusions $V^\varepsilon_t \hookrightarrow V_{t + \varepsilon}$ define the two morphisms of an $\varepsilon$-interleaving.
    To show (\ref{item: bound on d_B for smoothings}), define a matching by the function that sends each $(\overline{b},\overline{d})$ in $\dgm(V^\varepsilon)$ to the corresponding $(\overline{b}-\varepsilon,\overline{d}+\varepsilon)$ in $\dgm(V)$; any unmatched point in $\dgm(V)$ does not have a difference between birth and death greater than $2 \varepsilon$, so this is in fact an $\varepsilon$-matching.
\end{proof}

Since we have already established the isometry theorem for locally finite modules, the isometry theorem for q-tame modules can now be proved by using smoothings as approximations. 

\begin{theorem}[Isometry theorem for q-tame persistence modules]\label{theorem: isometry theorem q-tame}
For any q-tame persistence modules $U$ and $V$ indexed by $\mathbb{R}$,
\[
d_I(U,V) = d_B(\dgm(U), \dgm(V)).
\]
\end{theorem}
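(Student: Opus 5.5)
We approximate both modules by their smoothings and pass the isometry theorem for locally finite modules (Theorem~\ref{theorem:isometry_theorem_for_locally_finite}) through the triangle inequalities for $d_I$ and $d_B$, using Lemma~\ref{lemma: properties of smoothings}. Fix $\delta>0$ and consider the smoothings $U^\delta$ and $V^\delta$. By part~(\ref{item: smoothings are locally finite}) of the lemma these are locally finite. Hence Theorem~\ref{theorem:isometry_theorem_for_locally_finite} gives
\[
d_I(U^\delta,V^\delta) = d_B(\barcode(U^\delta),\barcode(V^\delta)).
\]
As noted in Section~\ref{subsection:background, distances}, the bottleneck distance between barcodes equals that between the undecorated diagrams. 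So the right side is $d_B(\dgm(U^\delta),\dgm(V^\delta))$.

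For the inequality $d_B(\dgm(U),\dgm(V)) \leq d_I(U,V)$, I would chain the triangle inequality for $d_B$ with parts~(\ref{item: bound on d_B for smoothings}) and~(\ref{item: bound on d_I for smoothings}) of the lemma:
\[
d_B(\dgm(U),\dgm(V)) \leq 2\delta + d_B(\dgm(U^\delta),\dgm(V^\delta)) = 2\delta + d_I(U^\delta,V^\delta) \leq 4\delta + d_I(U,V).
\]
Letting $\delta\to 0$ gives the inequality. The reverse inequality is symmetric: bound $d_I(U,V)$ by $2\delta + d_I(U^\delta,V^\delta)$, rewrite this as $2\delta + d_B(\dgm(U^\delta),\dgm(V^\delta))$, bound that by $4\delta + d_B(\dgm(U),\dgm(V))$, and let $\delta \to 0$. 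The case of infinite distances needs no separate treatment, since every inequality remains valid in the extended reals.

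The main point needing care is that the triangle inequality for $d_B$ is applied to diagrams of q-tame modules. Here $\dgm(U)$ is defined as a union of shifted diagrams and may have infinitely many points accumulating near the diagonal. Still, it is a multiset of points in the extended half-plane, and the composition-of-matchings argument does not depend on finiteness. So I expect this step to go through directly.

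A second subtlety is the interleaving triangle inequality, which is only stated for $\varepsilon>0$ via composition of interleavings. It is needed in the form $d_I(U,V)\le d_I(U,U^\delta)+d_I(U^\delta,V^\delta)+d_I(V^\delta,V)$, and composing an $\varepsilon_1$- and an $\varepsilon_2$-interleaving gives an $(\varepsilon_1+\varepsilon_2)$-interleaving, which suffices. I expect no real obstacle beyond checking these conventions.
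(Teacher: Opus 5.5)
Your proposal is correct and is essentially the paper's own proof. The paper likewise applies the locally finite isometry theorem to the smoothings, then uses parts (2) and (3) of the smoothing lemma with the triangle inequalities for $d_I$ and $d_B$, so the two sides agree up to $4\varepsilon$ for every $\varepsilon>0$.
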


\begin{proof}
For any $\varepsilon>0$, Lemma~\ref{lemma: properties of smoothings}(\ref{item: smoothings are locally finite}), shows $U^\varepsilon$ and $V^\varepsilon$ are locally finite, so by Theorem~\ref{theorem:isometry_theorem_for_locally_finite}, we have $d_I(U^\varepsilon, V^\varepsilon) = d_B(\dgm(U^\varepsilon), \dgm(V^\varepsilon))$. 
Applying the triangle inequality for $d_I$, Lemma~\ref{lemma: properties of smoothings}(\ref{item: bound on d_I for smoothings}) implies that $d_I(U,V)$ and $d_I(U^\varepsilon, V^\varepsilon)$ differ by at most $2 \varepsilon$.
Similarly, Lemma~\ref{lemma: properties of smoothings}(\ref{item: bound on d_B for smoothings}) implies $d_B(\dgm(U), \dgm(V))$ and $d_B(\dgm(U^\varepsilon), \dgm(V^\varepsilon))$ differ by at most $2 \varepsilon$.
Thus, $d_I(U,V)$ and $d_B(\dgm(U), \dgm(V))$ differ by at most $4 \varepsilon$, and since this holds for all $\varepsilon>0$, we have $d_I(U,V) = d_B(\dgm(U), \dgm(V))$.
\end{proof}

\section{A Category of Barcodes}\label{section: a category of barcodes}

This section and the next develop matrix algebra for persistence modules in more detail, and the ideas are applied to the persistent homology algorithm in Section~\ref{section: functorial PH algorithm}.
Here we present a category of finite barcodes that is equivalent to the category of persistence modules of finite type.
The morphisms will be the equivalence classes of matrices described in Theorem~\ref{theorem: hom set isomorphism}, and the equivalence of categories essentially follows from the theorem.
This is the categorical formalization of the statement that matrix algebra allows for computations with persistence modules, and it closely parallels the classic equivalence of categories between the category of finite vector spaces and the category of matrices (\cite[Exercise~I.4.6]{mac1971categories}, \cite[Corollary~1.5.11]{riehl2016category}).
It is also closely related to equivalences between categories of persistence modules and categories of graded modules (see \cite[Theorem~3.1]{zomorodian_carlsson_computing} and \cite[Remark~2.1]{carlsson-filippenko_PH_sum_metric}, for instance).

We also take this as an opportunity to explicitly discuss ``empty bars,'' which have equal birth and death time $b=d$.
The interval module for such a bar is the zero module, presented by the short exact sequence
\[
\begin{tikzcd}
0 \arrow[r] & {\mathbb{I} \interval{b, \infty}} \arrow[r, hook] & {\mathbb{I} \interval{b, \infty}} \arrow[r, two heads] & {\mathbb{I} \interval{b,b}} \arrow[r] & 0.
\end{tikzcd}
\]
Including zero modules presented as such in interval decompositions will be helpful in the following sections, where they arise naturally -- they can simply be removed when they are no longer helpful.
Theorem~\ref{theorem: hom set isomorphism} and the resulting matrix computations apply with such terms included: note that we never assumed that birth times are strictly less than death times in the proof.
To include an empty bar at $b$ in the decomposition of $U$, we let a vector representing an element of $U_t$ have an entry corresponding to this bar, which is constrained to be zero if $t \in \interval{-\infty, b}$ and is a trivial entry if $t \in \interval{b,\infty}$.
Corresponding entries in matrices will similarly either be zero or trivial.

What is lost when using empty bars is the ability to associate a unique birth and death time to a nonempty interval.
We will still want to distinguish between different birth times $b_1 \neq b_2$ in spite of the identical sets $\interval{b_1, b_1} = \varnothing = \interval{b_2, b_2}$ and identical interval modules $\mathbb{I}\interval{b_1, b_1} = 0 = \mathbb{I}\interval{b_2, b_2}$.
We thus represent bars by ordered pairs\footnote{Arguably, the terminology of persistence diagrams would be more fitting here. 
Our use of ordered pairs is most similar to the decorated persistence diagrams of~\cite{chazal2016structure} for persistence modules indexed by $\mathbb{R}$, although these don't allow pairs of equal decorated reals and are thus equivalent to barcodes without empty bars.
While the construction of the objects is not important from a categorical perspective, an alternate definition that would stay more true to barcodes could use intervals as nonempty bars and cuts of $R$ as empty bars.
See also the category barcodes without empty bars, $\overline{\cB}$, after Definition~\ref{definition:category of barcodes}.
} $(b,d)$ with $b \leq d$.
Using this language, we define a category of barcodes that allow empty bars.

\begin{definition}\label{definition:category of barcodes}
    Define a category $\cB_R$ of finite barcodes with endpoints in $R$ as follows.
    Let objects be finite indexed multisets $\{(b_i,d_i)\}_{i \in I}$ of ordered pairs of cuts of $R$ with $b_i \leq d_i$.
    Define the hom-sets by
    \[
    \cB_R \Big(\{(b'_j, d'_j)\}_{j \in J}, \{(b_i,d_i)\}_{i \in I}\Big) = \Big\{ 
    \{x_{i,j}\}_{\substack{i \in I \\ j \in J}} \,\Bigr\vert\, x_{i,j} = 0 \text{ if $b_i>b'_j$ or $d_i>d'_j$} \Big\} \Bigr/ \sim,
    \]
    where we identify $\{x_{i,j}\} \sim \{x'_{i,j}\}$ if $x_{i,j} = x'_{i,j}$ whenever $b'_j < d_i$. 
    A morphism is thus an equivalence class of a matrix $X=\{x_{i,j}\}$, written as $[X]$ when the hom-set is understood.
    Given morphisms 
    \[
    \{(b''_k, d''_k)\}_{k \in K} \xrightarrow[]{[Y]} \{(b'_j, d'_j)\}_{j \in J} \xrightarrow[]{[X]} \{(b_i,d_i)\}_{i \in I},
    \]
    define composition by matrix multiplication: $[X][Y] = [XY]$. 
\end{definition}

We mostly omit the subscript $R$, writing the category of barcodes as $\cB = \cB_R$.
Composition is well defined by Theorem~\ref{theorem: hom set isomorphism}, or it can also be checked on the classes of matrices directly by showing that matrix multiplication respects the equivalence classes.
Adjusting the definition to require $b_i < d_i$ produces a category $\overline{\cB}$ of barcodes without empty bars (in which intervals can then be used instead of ordered pairs of endpoints).
The functor $\cB \to \overline{\cB}$ that removes empty bars is an equivalence of categories, with inverse given by the inclusion $\overline{\cB} \to \cB$.
We will use $\cB$ below, but readers can check that the rest of this section could be carried out analogously using $\overline{\cB}$ instead of $\cB$.

We now relate $\cB$ to the category of persistence modules of finite type indexed by $R$, which we denote $\cP = \cP_R$.
First, any barcode can be realized by a direct sum of interval modules: by a slight abuse of notation, we will extend the use of the symbol $\mathbb{I}$ for interval modules and let a functor $\mathbb{I} \colon \cB \to \cP$ be defined on objects by $\mathbb{I}(\{(b_i,d_i)\}_{i \in I}) = \bigoplus_{i \in I} \mathbb{I}\interval{b_i,d_i}$.
For a morphism $[X] \colon \{(b'_j, d'_j)\}_j \to \{(b_i,d_i)\}_i$, we let $\mathbb{I}([X])$ be the morphism $\bigoplus_{j \in J} \mathbb{I}\interval{b'_j, d'_j} \to \bigoplus_{i \in I} \mathbb{I}\interval{b_i,d_i}$ computed by the matrix $X$, following Equation~\ref{equation:computation_of_morphism}.
That $\mathbb{I}$ is an equivalence of categories can be seen from the fact that it is full, faithful, and essentially surjective on objects~\cite[Theorem~1.5.9]{riehl2016category}, by Theorem~\ref{theorem: hom set isomorphism}.
It can also be checked directly by using choices of interval decompositions to define an inverse, as we do below\footnote{As noted in~\cite[Theorem~1.5.9]{riehl2016category}, the characterization of a functor defining an equivalence of categories as one that is full, faithful, and essentially surjective on objects uses (a categorical version of) the axiom of choice.
This will be made explicit in choices of interval decompositions and their use in Theorem~\ref{theorem: equivalence of categories}.
See also~\cite[Remark~1.5.14]{riehl2016category}.}.

We define a barcode functor $\barcode \colon \cP \to \cB$, which tacitly depends on choices of interval decompositions.
Suppose all persistence modules of finite type are identified with persistence modules of the form $\bigoplus_{i \in I} \mathbb{I}\interval{b_i,d_i}$ by fixed isomorphisms, with $\mathbb{I}(\{(b_i,d_i)\}_{i \in I})$ being identified with itself by the identity morphism in each case.
Given a persistence module of finite type $V$ and the chosen isomorphism $V \cong \bigoplus_{i \in I} \mathbb{I}\interval{b_i,d_i}$, let $\barcode(V) = \{(b_i,d_i)\}_{i \in I}$.
If $\varphi \colon U \to V$ is a morphism of persistence modules of finite type with $\barcode(U) = \{(b'_j, d'_j)\}_{j \in J}$ and $\barcode(V) = \{(b_i,d_i)\}_{i \in I}$, let $\widetilde{\varphi}$ be the composition with the chosen isomorphisms:
\[
    \bigoplus_{j \in J} \mathbb{I}\interval{b'_j, d'_j} \xrightarrow[]{\cong} U \xrightarrow[]{\varphi} V \xrightarrow[]{\cong} \bigoplus_{i \in I} \mathbb{I}\interval{b_i,d_i}.
\]
Define $\barcode(\varphi) \colon \barcode(U) \to \barcode(V)$ to be the class of a matrix $X$ such that $\widetilde{\varphi}$ is computed by multiplication by $X$, as in Equation~\ref{equation:computation_of_morphism}.
The class is uniquely determined, by Theorem~\ref{theorem: hom set isomorphism} (and we can in fact always take $X$ to be the matrix identified with $\widetilde{\varphi}$ in Equation~\ref{equation:second description of hom sets}).

\begin{theorem}\label{theorem: equivalence of categories}
    The functors $\barcode$ (defined with any choice of interval decompositions as above) and $\mathbb{I}$ form an equivalence of categories between $\cP$ and $\cB$.
\end{theorem}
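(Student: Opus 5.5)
We will verify directly that $\barcode$ is a functor, that $\barcode \circ \mathbb{I}$ is the identity functor on $\cB$, and that $\mathbb{I} \circ \barcode$ is naturally isomorphic to the identity functor on $\cP$. Throughout we use Theorem~\ref{theorem: hom set isomorphism}, which identifies $\Hom\big(\bigoplus_j \mathbb{I}\interval{b'_j,d'_j}, \bigoplus_i \mathbb{I}\interval{b_i,d_i}\big)$ with the matrix classes of $\cB$ and turns composition into matrix multiplication. We first check that $\mathbb{I}$ is a well-defined functor. The morphism computed by $X$ via Equation~\ref{equation:computation_of_morphism} depends only on $[X]$, by the isomorphism of the theorem. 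Identity classes go to identity morphisms, and $\mathbb{I}([X][Y]) = \mathbb{I}([X]) \circ \mathbb{I}([Y])$ because composition corresponds to multiplication. Empty bars cause no trouble, since the theorem's proof never assumed $b < d$.

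Next we check functoriality of $\barcode$. For $\varphi \colon U \to V$ and $\omega \colon W \to U$, write $\alpha_U \colon \bigoplus_j \mathbb{I}\interval{b'_j,d'_j} \to U$ for the chosen isomorphisms. Then
\[
\widetilde{\varphi \circ \omega} = \alpha_V^{-1}\varphi\omega\alpha_W = (\alpha_V^{-1}\varphi\alpha_U)(\alpha_U^{-1}\omega\alpha_W) = \widetilde{\varphi} \circ \widetilde{\omega}.
\]
Hence, by Theorem~\ref{theorem: hom set isomorphism}, $\barcode(\varphi\circ\omega) = \barcode(\varphi)\barcode(\omega)$. Identities are handled the same way, since $\widetilde{\mathrm{id}_U} = \mathrm{id}$ is represented by the identity matrix.

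For the composite on $\cB$, recall the convention that $\mathbb{I}(\{(b_i,d_i)\}_i)$ is identified with itself by the identity morphism. So $\barcode(\mathbb{I}(B)) = B$ on objects. For a morphism, $\widetilde{\mathbb{I}([X])} = \mathbb{I}([X])$, which is computed by $X$, so $\barcode(\mathbb{I}([X])) = [X]$. Thus $\barcode \circ \mathbb{I} = \mathrm{id}_{\cB}$ strictly. For the composite on $\cP$, the candidate natural isomorphism $\mathbb{I}\circ\barcode \Rightarrow \mathrm{id}_{\cP}$ has component $\alpha_U$ at $U$.

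The only substantive step is naturality, namely $\varphi \circ \alpha_U = \alpha_V \circ \mathbb{I}(\barcode(\varphi))$. Since $\barcode(\varphi)$ is the class of a matrix computing $\widetilde{\varphi}$, we have $\mathbb{I}(\barcode(\varphi)) = \widetilde{\varphi} = \alpha_V^{-1}\varphi\alpha_U$, which gives the identity immediately. The one point needing care is that $\mathbb{I}(\barcode(\varphi))$ is independent of the chosen representative matrix. This again follows from the injectivity part of Theorem~\ref{theorem: hom set isomorphism}: two matrices computing the same morphism are equivalent, and equivalent matrices compute the same morphism. Finally, since $\alpha_U$ is an isomorphism, we obtain the claimed equivalence of categories.
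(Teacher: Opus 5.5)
Your proof is correct and follows the paper's argument. As in the paper, $\barcode \circ \mathbb{I}$ is the identity because each $\mathbb{I}(B)$ is identified with itself, and the chosen isomorphisms give the natural isomorphism $\mathbb{I} \circ \barcode \cong \mathrm{id}_{\cP}$, with naturality coming directly from the definition of $\barcode(\varphi)$; you also spell out the functoriality of $\barcode$ and $\mathbb{I}$ via Theorem~\ref{theorem: hom set isomorphism}, which the paper leaves implicit.
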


\begin{proof}
    Since $\barcode \circ \mathbb{I}$ is the identity on $\cB$, we just need to check that $\mathbb{I} \circ \barcode$ is naturally isomorphic to the identity functor on $\cP$.
    The components of the natural isomorphisms are those chosen in the definition of $\barcode$: given a morphism of persistence modules $\varphi \colon U \to V$ with $\barcode(U) = \{(b'_j, d'_j)\}_{j \in J}$ and $\barcode(V) = \{(b_i,d_i)\}_{i \in I}$, the following diagram commutes by the definition of $\barcode(\varphi)$.
    \[
    \begin{tikzcd}[/tikz/baseline=(tikz@f@1-2-1.base)]
    U \arrow[r, "\cong"] \arrow[d, "\varphi"'] & {\bigoplus_{j \in J} \mathbb{I}\interval{b'_j, d'_j}} \arrow[d, "\mathbb{I} \circ \barcode(\varphi)"] \\
    V \arrow[r, "\cong"']                      & {\bigoplus_{i \in I} \mathbb{I}\interval{b_i, d_i}}                                                  
    \end{tikzcd}
    \qedhere
    \]
\end{proof}

In this categorical setting, once choices of interval decompositions have been made, dual modules and matrix transposes define contravariant functors as shown below, where subscripts indicate the index set.
By Lemma~\ref{lemma: dual modules and transposes}, the square formed by $\barcode_R$ and $\barcode_{R^\mathrm{op}}$ commutes if the interval decompositions defining $\barcode_{R^\mathrm{op}}$ use the dual bases of the interval decompositions that define $\barcode_R$, and the square formed by $\mathbb{I}_R$ and $\mathbb{I}_{R^\mathrm{op}}$ commutes up to natural isomorphism.
\[
\begin{tikzcd}
\mathcal{P}_R \arrow[r, "\mathrm{bar}_R", shift left] \arrow[d, "\ast"']          & \mathcal{B}_R \arrow[l, "\mathbb{I}_R", shift left] \arrow[d, "^\mathrm{T}"]        \\
\mathcal{P}_{R^\mathrm{op}} \arrow[r, "\mathrm{bar}_{R^\mathrm{op}}", shift left] & \mathcal{B}_{R^{\mathrm{op}}} \arrow[l, "\mathbb{I}_{R^{\mathrm{op}}}", shift left]
\end{tikzcd}
\]

The functor $\barcode$ forms an additional functorial step at the end of the persistent homology pipeline.
Earlier steps in the computation of persistent homology are frequently functorial, including various methods to transform a space or filtration of spaces into a filtered chain complex and the definition of a persistent homology module from this chain complex.
Thus, in many cases, the entire process from space to barcode can be made functorial, where maps between spaces (or filtrations) are mapped to classes of matrices between barcodes.
In Section~\ref{section: functorial PH algorithm}, we will describe how to compute morphisms between persistent homology barcodes with matrix operations, showing morphisms could in fact be incorporated into persistent homology algorithms.

Finally, the category of barcodes defined here also provides an alternate perspective to the nonexistence result in~\cite[Proposition~5.10]{Bauer_Lesnick_2015}, which states that there cannot exist a functor from the category of (in their case, pointwise finite-dimensional) persistence modules to a category of matchings that sends each persistence module to its barcode.
By replacing the category of matchings there with our category of barcodes, we are able to construct a functor sending each persistence module to its barcode.
Theorem~\ref{theorem: algebraic stability finite modules} then shows how morphisms of barcodes can be used to find matchings, even though the mapping of morphisms to matchings cannot be functorial.

\subsection{Generalizing to infinite barcodes}\label{subsection: infinite barcodes}

Here we briefly observe that matrix representations of morphisms can be adjusted to allow for infinite barcodes, leaving details to the interested reader.
Presentations of interval modules are less convenient in this setting, so instead we adapt the isomorphism of Equation~\ref{equation:second description of hom sets} to the infinite case.

In general, if $U \cong \bigoplus_{j \in J} \mathbb{I}\interval{b'_j, d'_j}$ and $V \cong \bigoplus_{i \in I} \mathbb{I}\interval{b_i,d_i}$, then $\Hom(U,V)$ is isomorphic to the space of (possibly infinite) matrices $\{x_{i,j}\}_{i,j}$ such that 1) $x_{i,j}$ may be nonzero only if $b_i \leq b'_j < d_i \leq d'_j$ and 2) for all $j$ and all $t \in \interval{b'_j, d'_j}$, there are finitely many $i$ such that $t \in \interval{b_i, d_i}$ and $x_{i,j} \neq 0$.
To meet the second condition, it is sufficient, but not always necessary, to require that for each $j$, only finitely many $x_{i,j}$ are nonzero.
For an example in which it is not necessary, using persistence modules indexed by $\mathbb{R}$, consider morphisms $\mathbb{I}(0,1) \to \bigoplus_{n \geq 1} \mathbb{I}(0, \frac{1}{n})$.

For morphisms $\bigoplus_{k \in K} \mathbb{I}\interval{b''_k, d''_k} \to \bigoplus_{j \in J} \mathbb{I}\interval{b'_j, d'_j} \to \bigoplus_{i \in I} \mathbb{I}\interval{b_i,d_i}$,
where the first is represented by $\{y_{j,k}\}_{j,k}$ and the second by $\{x_{i,j}\}_{i,j}$, the composition is represented by $\{z_{i,k}\}_{i,k}$ where
\[
z_{i,k} = \begin{cases}
    \sum_{j \in J} x_{i,j}y_{j,k} & \text{if $b_i \leq b''_k < d_i \leq d''_k$} \\
    0 & \text{otherwise}.
\end{cases}
\]
It can be checked that the sum is always well-defined, as it has only finitely many nonzero terms.
Like in the finite case, this yields a category of barcodes equivalent to the category of interval-decomposable modules.
It is also possible to restrict to full subcategories of interval-decomposable modules meeting additional conditions.
In particular, if the index set $R$ meets the conditions in Theorem~\ref{theorem: pfd implies interval decomposable}, there is a category of pointwise finite-dimensional persistence modules and an equivalent category of pointwise finite barcodes, i.e., those such that only finitely many bars contain any given $t \in R$.

\section{Computations of images, kernels, and cokernels}\label{section: computations}

Here we return to give a more thorough treatment of row and column operations on matrices representing morphisms of persistence modules of finite type, using them to compute images, kernels, and cokernels.
In Section~\ref{section:matrix operations for persistence modules}, we described how to perform ordered column reduction of a matrix $X$ in standard order representing a morphism $U \to V$. 
This produces a reduced matrix $XT$, where $T$ is upper triangular with 1's on the diagonal.
Even though $T$ is invertible as a matrix, it does not necessarily represent an automorphism of $U$, as shown in the following example.
This is a key difference from matrix representations of linear maps.

\begin{example}
    With index set $R = \mathbb{R}$, let $U = \mathbb{I}(L'_1) \oplus \mathbb{I}(L'_2)$ and $V = \mathbb{I}(L_1) \oplus \mathbb{I}(L_2)$ with
    \begin{align*}
        \hspace{4cm} L'_1 &= [0,4) & L_1 &= [0,2)\hspace{4cm}\\
        \hspace{4cm}L'_2 &= [1,3) & L_2 &= [0,2). \hspace{4cm}
    \end{align*}
    A matrix $X$ representing a morphism $U \to V$ has no trivial entries, and all entries may be nonzero; we can thus consider an $X$ that is not already in column reduced form.
    Performing ordered column reduction and writing the reduced matrix as $XT$ with $T = \{t_{j,k}\}$, we will then have $t_{1,2} \neq 0$.
    However, any matrix representing a morphism $U \to U$ must be diagonal by Theorem~\ref{theorem: hom set isomorphism}, so $T$ does not represent an automorphism of $U$.
    Note that it is only the relative positions of the death times of $L'_1$ and $L'_2$ that have prevented $T$ from representing a morphism $U \to U$.
\end{example}

We set some notation to be used for the rest of this section.
Let $U = \bigoplus_{j \in J} \mathbb{I} \interval{b'_j, d'_j}$ and $V = \bigoplus_{i \in I} \mathbb{I} \interval{b_i, d_i}$ be persistence modules of finite type and give $J$ and $I$ standard orders for the pair $(U,V)$.
Define $\overline{U}$ and $\underline{V}$ by extending the bars of $U$ to $+\infty$ and the bars of $V$ to $-\infty$: that is, $\overline{U} = \bigoplus_{j \in J} \mathbb{I} \interval{b'_j, \infty}$ and $\underline{V} = \bigoplus_{i \in I} \mathbb{I} \interval{-\infty, d_i}$.
We have an epimorphism $\overline{U} \to U$ and a monomorphism $V \to \underline{V}$ represented by the respective identity matrices.
Let a morphism $\varphi \colon U \to V$ be represented by $X$.
Using the collection of pivot positions obtained by either ordered row or column reduction of $X$ (which give the same pivot positions by Lemma~\ref{lemma:same_pivots}), we let $P_I \subseteq I$ be the set of $i \in I$ such that row $i$ has a nontrivial pivot and let $P_J \subseteq J$ be the set of $j \in J$ such that column $j$ has a nontrivial pivot.
We also let $p \colon P_J \to P_I$ send $j$ to the row index of the pivot in column $j$ and let $p' \colon P_I \to P_J$ send $i$ to the column index of the pivot in row $i$.
The computations below will involve reordering the rows and columns of matrices.
This will always be done by changing the order assigned to the index sets rather than permuting elements, that is, $x_{i,j}$ always indicates the same entry regardless of the orders given to the index sets $I$ and $J$.

Any invertible upper triangular matrix $T$ indexed by $J$ represents an automorphism of $\overline{U}$, as the ordering of $J$ by birth times and the infinite death times imply that both $T$ and $T^{-1}$ represent morphisms $\overline{U} \to \overline{U}$.
Dually, any invertible lower triangular matrix $S$ represents an automorphism of $\underline{V}$, and each automorphism can be understood as a change of interval decomposition, analogous to a change of basis.
In particular, if $SXT$ is obtained from $X$ by ordered row and column reduction of $X$ (in either order, though $S$ and $T$ will in general depend on the order), then the map $\overline{U} \to \underline{V}$ represented by $SXT$ has an image, kernel, and cokernel that can be observed directly from the pivot positions.
Below, we relate these back to the morphism $\varphi$ represented by $X$.
We list the following theorems together for reference.

\vspace{1cm}

\begin{theorem}[Barcodes of images]\label{theorem: barcodes of images}
    The persistence module $\im \varphi$ has a barcode consisting of a bar $\interval{b'_j, d_{i}}$ for each nontrivial pivot position $i,j$.
    The factorization of $\varphi$ into a pair of maps, $U \to \im \varphi$, defined by restricting the codomain of $\varphi$, and the inclusion $\im \varphi \to V$, can be represented in two ways\footnote{The two versions of the image can be interpreted as image and coimage.}:  
    \begin{enumerate}
        \item\label{item: image barcode 1}
        Index the bars of $\im \varphi$ by $P_I$: $\im \varphi \cong \bigoplus_{i \in P_I} \mathbb{I}\interval{b'_{p'(i)}, d_i}$.
        Let $SX$ be obtained by ordered row reduction of $X$.
        Then we may represent $U \to \im \varphi$ by the rows of $SX$ indexed by $P_I$ and represent $\im \varphi \to V$ by the columns of $S^{-1}$ indexed by $P_I$.
        \item\label{item: image barcode 2}
        Index the bars of $\im \varphi$ by $P_J$: $\im \varphi \cong \bigoplus_{j \in P_J} \mathbb{I}\interval{b'_j, d_{p(j)}}$.
        Let $XT$ be obtained by ordered column reduction of $X$. 
        Then we may represent $U \to \im \varphi$ by the rows of $T^{-1}$ indexed by $P_J$ and represent $\im \varphi \to V$ by the columns of $XT$ indexed by $P_J$.
    \end{enumerate}
\end{theorem}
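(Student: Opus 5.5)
I will prove version (\ref{item: image barcode 2}) directly and then deduce version (\ref{item: image barcode 1}) by duality. Let $X' = XT$ be the ordered column reduction, with $T$ upper triangular and unipotent. As noted above, $T$ represents an automorphism of $\overline{U}$ even when it does not represent a morphism $U \to U$, and $X'$ represents the morphism $\overline{U} \to V$ given by the composite $\overline{U} \xrightarrow{[T]} \overline{U} \to U \xrightarrow{\varphi} V$. Since $\overline{U} \to U$ is an epimorphism and $[T]$ is an automorphism, $\im\varphi$ is the image of this composite. Concretely, at each $t$ it is spanned by the classes $[X'e_j]$ over those $j$ with $b'_j < t$ (cut sense).

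First I would sort the columns of $X'$ into three types. A zero column contributes nothing. A column with a trivial pivot represents zero in $V_t$ for every $t$ at which it is born, since all its nonzero entries are trivial. For $j \in P_J$, the column $X'e_j$ is nonzero in $V_t$ exactly for $t \in \interval{b'_j, d_{p(j)}}$. It is born at $b'_j$, and it dies when its pivot row dies: entries in rows $i' > p(j)$ have $d_{i'} \le d_{p(j)}$ by the standard order, and earlier rows are zero. The key claim is that for each $t$, the vectors $\{[X'e_j] : j \in P_J,\ t \in \interval{b'_j, d_{p(j)}}\}$ are linearly independent in $V_t$. I would argue this by looking at the pivot rows. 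The pivots lie in distinct rows, and every entry to the right of a pivot in its row is zero. The rows $p(j)$ in question are alive at $t$, so those entries survive the quotient. Hence a linear combination restricted to these rows is triangular with nonzero diagonal, and independence follows.

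Next I would define the morphism $\bigoplus_{j\in P_J}\mathbb{I}\interval{b'_j,d_{p(j)}} \to V$ by the columns of $X'$ indexed by $P_J$. Checking via Theorem~\ref{theorem: hom set isomorphism} that this matrix has the required zero pattern needs two facts. First, $b_i \le b'_j$ whenever $x'_{i,j}\neq 0$; this holds because column operations only add earlier columns, which have earlier birth times. Second, $d_i \le d_{p(j)}$ for nonzero nontrivial entries, which follows from the standard order as above. By the independence and spanning claims, this morphism is pointwise injective with image $\im\varphi$, so it gives the isomorphism and the barcode.

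The map $U \to \im\varphi$ should then be given by the rows of $T^{-1}$ indexed by $P_J$. Since $X = X'T^{-1}$, and columns of $X'$ outside $P_J$ vanish in the codomain, the columns of $X$ factor as the $P_J$-columns of $X'$ times the $P_J$-rows of $T^{-1}$. The main obstacle is verifying that these rows of $T^{-1}$ actually represent a morphism $U \to \bigoplus_{P_J}\mathbb{I}\interval{b'_j,d_{p(j)}}$, i.e.\ that the death-time constraint $d_{p(j)} \le d'_k$ holds for nontrivial nonzero entries $(T^{-1})_{j,k}$. I would get this from the fact that the composite is well defined on $U$ and the image inclusion is a monomorphism. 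In other words, the map $\overline{U}\to\im\varphi$ kills $\bigoplus\mathbb{I}\interval{d'_k,\infty}$, because the composite with the mono does. So it descends to $U$, and its matrix is forced up to equivalence.

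Finally, for (\ref{item: image barcode 1}) I would apply (\ref{item: image barcode 2}) to $\varphi^\ast$, which is represented by $X^{\mathrm T}$ (Lemma~\ref{lemma: dual modules and transposes}). Ordered column reduction of $X^{\mathrm T}$ is the transpose of ordered row reduction $SX$, and the pivot positions match by Lemma~\ref{lemma:same_pivots}. I would then use $\im\varphi^\ast \cong (\im\varphi)^\ast$ and dualize the factorization back, which yields the rows of $SX$ and the columns of $S^{-1}$ indexed by $P_I$.
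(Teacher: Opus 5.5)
Your proposal is correct. Part (2) closely follows the paper's proof: the pivot columns of $XT$ give a pointwise basis of $\im\varphi_t$, and the $P_J$ rows of $T^{-1}$ give the corestriction. Your descent argument is a more formal packaging of the paper's pointwise check that $\iota_t$ sends $[(T^{-1}u)_{P_J}]$ to $[Xu]$. It regards those rows as a map $\overline{U}\to\im\varphi$, notes that its composite with the monomorphism $\iota$ kills $\bigoplus_k\mathbb{I}\interval{d'_k,\infty}$, and descends along $\overline{U}\to U$. It has the merit of making the death-time zero pattern explicit.

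The genuine difference is in part (1). You obtain it by applying (2) to $\varphi^\ast$ and transposing back via Lemma~\ref{lemma: dual modules and transposes}. This is valid for four reasons: standard orders for $(U,V)$ are standard for $(V^\ast,U^\ast)$; trivial entries correspond under transposition; ordered row reduction of $X$ is literally the transpose of ordered column reduction of $X^{\mathrm{T}}$; and epi--mono factorizations dualize for pointwise finite-dimensional modules.

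The paper instead proves (1) directly over $R$, without dual modules:
\begin{enumerate}
\item It further column reduces $SX$ to a pivot-only matrix $SXT'$ and regards it as a map $\overline{U}\to\underline{V}$, whose image is read off summand by summand.
\item It identifies this image with $\im\varphi$, using that $S$ represents a monomorphism $V\to\underline{V}$ and $T'$ an epimorphism $\overline{U}\to U$.
\item It gets the factorization from two commuting triangles together with that monomorphism.
\end{enumerate}

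Your duality route is shorter and mirrors how the paper itself handles cokernels. Its cost is passing through $R^{\mathrm{op}}$ and the identification $\im\varphi^\ast\cong(\im\varphi)^\ast$. The paper's route stays in $R$ and introduces the pivot-only $\overline{U}\to\underline{V}$ technique, which it reuses in the proof of Theorem~\ref{theorem: barcodes of kernels}.
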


\begin{theorem}[Barcodes of kernels]\label{theorem: barcodes of kernels}
    Let $XT$ be obtained by ordered column reduction of $X$ and let
        \[
        b''_j = \begin{cases}
            d_{p(j)} & \text{ if $j \in P_J$} \\
            b'_j & \text{ if $j \in J \setminus P_J$}.
        \end{cases}
        \]
    Reorder the columns of $T$ by $b''_j$ and reorder the rows in reverse by $d'_j$.
    Perform ordered column reduction of $T$ with respect to these orders to obtain $\widetilde{T}$ and let $q(j)$ be the row index of the pivot of $\widetilde{T}$ in column $j$.
    Then $b''_j \leq d'_{q(j)}$ for each $j$, $\ker \varphi \cong \bigoplus_{j \in J} \mathbb{I} \interval{b''_j, d'_{q(j)}}$, and $\widetilde{T}$ represents the inclusion $\ker \varphi \hookrightarrow U$.
\end{theorem}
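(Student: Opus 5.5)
The plan is to realize $\ker\varphi$ as the image of a morphism into $U$ whose matrix is $T$, and then apply Theorem~\ref{theorem: barcodes of images}(\ref{item: image barcode 2}).

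\textbf{Step 1: describe $\ker\varphi_t$.} Recall that $T$ represents an automorphism of $\overline{U}$, since $J$ is ordered by birth times and the bars of $\overline{U}$ have infinite death times. Hence every element of $U_t$ can be written as $[Tw]$, where $w$ is supported on those $j$ with $b'_j \le t$. Then $\varphi_t([Tw]) = [XTw]$, where $X' = XT$ is column reduced.

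I claim that $[X'w] = 0$ in $V_t$ if and only if $w_j = 0$ for every $j \in P_J$ with $t < d_{p(j)}$. The argument splits into three cases.
\begin{enumerate}
  \item A column with a trivial pivot, or a zero column, represents zero. So such $j \notin P_J$ contribute nothing.
  \item Suppose $j \in P_J$ and $t \ge d_{p(j)}$. The nonzero entries of column $j$ lie in rows $i' \ge p(j)$. For these rows $d_{i'} \le d_{p(j)} \le t$, so they are quotiented out at $t$. Hence this column also vanishes in $V_t$.
  \item Suppose some $j \in P_J$ with $t < d_{p(j)}$ has $w_j \ne 0$. Among all such $j$, choose the one whose pivot row $i = p(j)$ is least. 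Then row $i$ of $X'w$ equals $w_j x'_{i,j} \neq 0$, and this entry is not quotiented out at $t$. To see this, look at the other columns $k$:
  \begin{itemize}
    \item if $k$ has a pivot row below $i$, its row-$i$ entry is zero;
    \item if $k$ has a pivot row above $i$, then $d_i \le d_{p(k)} \le t$ by the minimal choice, contradicting $t<d_i$, so either $w_k = 0$ or such $k$ is irrelevant;
    \item pivot rows are distinct, and entries to the right of a pivot in its row vanish.
  \end{itemize}
\end{enumerate}
Since nontrivial pivots satisfy $b'_j < d_{p(j)}$, the claim says exactly this: $\ker\varphi_t$ consists of the classes $[Tw]$ with $w$ supported on $\{j : b''_j \le t\}$.

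\textbf{Step 2: build a morphism whose image is $\ker\varphi$.} Let $\overline{W} = \bigoplus_{j\in J}\mathbb{I}\interval{b''_j,\infty}$. I will check that $T$ represents a morphism $\overline{W}\to U$. For a nonzero entry $t_{j,k}$ we have $j \le k$ in the birth order, so $b'_j \le b'_k \le b''_k$. All death times of $\overline{W}$ are infinite, so the death-time condition $d'_j \le \infty$ holds automatically. By Step 1, the image of this morphism is precisely $\ker\varphi \subseteq U$, and the corestriction is the inclusion.

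\textbf{Step 3: apply the image theorem.} The orders described in the statement are standard orders for the pair $(\overline{W},U)$: columns by $b''_j$, rows in reverse by $d'_j$. So Theorem~\ref{theorem: barcodes of images}(\ref{item: image barcode 2}) applies to the column reduction $\widetilde{T}$. It gives bars $\interval{b''_j, d'_{q(j)}}$ for the nontrivial pivots, and the inclusion $\ker\varphi \to U$ is represented by the corresponding columns of $\widetilde{T}$.

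Two points remain to match the statement.
\begin{itemize}
  \item \emph{Every column has a pivot.} $T$ is an invertible matrix, and ordered column operations preserve invertibility. So every column of $\widetilde{T}$ has some pivot $q(j)$.
  \item \emph{Trivial pivots give empty bars.} A trivial pivot means $d'_{q(j)} \le b''_j$. Such a column represents zero, so it can be included as an empty bar in the sense of Section~\ref{section: a category of barcodes}. This accounts for indexing the decomposition by all of $J$ and for the inequality as stated, which degenerates to equality $b''_j = d'_{q(j)}$ (or is excluded) precisely in this case.
\end{itemize}
I expect the delicate step to be reconciling the trivial-pivot case with the assertion $b''_j \le d'_{q(j)}$ for every $j$. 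If a trivial pivot can produce a strict inequality $d'_{q(j)} < b''_j$, I would argue via rank counts of upper-left submatrices, as in Lemma~\ref{lemma:same_pivots}, that this cannot happen. Alternatively I would interpret such a bar as an empty bar and reindex it.

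The main obstacle is Step 1: the careful bookkeeping of which entries are quotiented out at $t$, and the use of distinct pivot rows to show that nonvanishing entries are detected.
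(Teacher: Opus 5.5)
Your Steps 1--3 are correct, and they reach the paper's intermediate result by a different route. The paper row reduces $XT$ further to a pure pivot matrix representing $\overline{U}\to\underline{V}$, reads off its kernel $\bigoplus_j\mathbb{I}\interval{b''_j,\infty}$, and transfers this using that $S'$ represents a monomorphism and $T$ an epimorphism. You instead describe $\ker\varphi_t$ elementwise from the column-reduced form alone. In Case~3 you should also say why a column $k$ with a trivial pivot in row $i_k$ and $w_k\neq 0$ has a zero entry in row $i$: we have $d_{i_k}\le b'_k$ and $t\in\interval{b'_k,\infty}$, while $t\in\interval{-\infty,d_i}$, so $d_{i_k}<d_i$ and $i_k$ comes after $i$. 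Both routes give $\ker\varphi=\im(\overline{W}\xrightarrow{T}U)$. By Theorem~\ref{theorem: barcodes of images}(\ref{item: image barcode 2}) this yields $\ker\varphi\cong\bigoplus_{j:\,b''_j<d'_{q(j)}}\mathbb{I}\interval{b''_j,d'_{q(j)}}$, with the inclusion given by those columns of $\widetilde{T}$.

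The gap is the part you flag but leave open. The statement asserts $b''_j\le d'_{q(j)}$ for \emph{every} $j$, i.e.\ $b''_j=d'_{q(j)}$ whenever the pivot of column $j$ of $\widetilde{T}$ is trivial, and a trivial pivot by itself only gives $d'_{q(j)}\le b''_j$.

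Reindexing does not rescue this. If $d'_{q(j)}<b''_j$, then $(b''_j,d'_{q(j)})$ is not a bar, and the full matrix $\widetilde{T}$ does not represent a morphism out of the claimed module. Moreover, Theorem~\ref{theorem: factor through ker and coker} relies on $\bigoplus_j\mathbb{I}\interval{d'_{q(j)},\infty}\hookrightarrow\bigoplus_j\mathbb{I}\interval{b''_j,\infty}$ presenting $\ker\varphi$.

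Pivot invariance as in Lemma~\ref{lemma:same_pivots} does not by itself give the inequality either. The claim links the pivots of $XT$ (through $b''$) with those of $\widetilde{T}$; for injective $\varphi$ with no empty bars in $U$, it amounts to the multiset identity $\{d_{p(j)}\}_{j\in J}=\{d'_j\}_{j\in J}$.

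What is needed is a dimension count for $\varphi$ itself. The paper does this via rank--nullity applied to artificial total bar lengths, but a pointwise version suffices. Since $\widetilde{T}$ is invertible and column reduced, $q$ is a bijection of $J$. For each $t\in R$, rank--nullity for $\varphi_t$ together with the image barcode $\{\interval{b'_j,d_{p(j)}}\}_{j\in P_J}$ gives
\[
\dim\ker\varphi_t=\#\{j: t\in\interval{b''_j,\infty}\}-\#\{j: t\in\interval{d'_{q(j)},\infty}\}.
\]
Your Step~3 gives $\dim\ker\varphi_t=\#\{j: t\in(b''_j)^{\uparrow}\cap(d'_{q(j)})^{\downarrow}\}$, where trivial pivots contribute nothing. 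Comparing the two counts gives $\#\{j: t\in(d'_{q(j)})^{\uparrow}\cap(b''_j)^{\downarrow}\}=0$ for every $t\in R$. This rules out $d'_{q(j)}<b''_j$, because two distinct cuts are separated by an element of $R$.
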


\begin{theorem}[Barcodes of cokernels]\label{theorem: barcodes of cokernels}
    Let $SX$ be obtained by ordered row reduction of $X$ and let
    \[
    d''_i = \begin{cases}
        b'_{p'(i)} & \text{ if $i\in P_I$} \\
        d_i & \text{ if $i \in I \setminus P_I$}.
    \end{cases}
    \]
    Reorder the rows of $S$ in reverse by $d''_i$ and reorder the columns by $b_i$.
    Perform ordered row reduction of $S$ with respect to these orders to obtain $\widetilde{S}$ and let $q'(i)$ be the column index of the pivot of $\widetilde{S}$ in row $i$.
    Then $b_{q'(i)} \leq d''_i$ for each $i$, $\coker \varphi \cong \bigoplus_{i \in I} \mathbb{I} \interval{b_{q'(i)}, d''_i}$, and $\widetilde{S}$ represents the quotient map $V \to \coker \varphi$.
\end{theorem}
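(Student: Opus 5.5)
The plan is to deduce Theorem~\ref{theorem: barcodes of cokernels} from Theorem~\ref{theorem: barcodes of kernels} by duality, using Lemma~\ref{lemma: dual modules and transposes}. Pointwise, dualizing finite-dimensional vector spaces is exact. So the exact sequence $U \xrightarrow{\varphi} V \to \coker\varphi \to 0$ dualizes to an exact sequence $0 \to (\coker\varphi)^\ast \to V^\ast \xrightarrow{\varphi^\ast} U^\ast$ of persistence modules indexed by $R^{\mathrm{op}}$. This identifies $(\coker\varphi)^\ast$ with $\ker\varphi^\ast$, and it identifies the dual of the quotient map $V \to \coker\varphi$ with the inclusion $\ker\varphi^\ast \hookrightarrow V^\ast$. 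Since the kernel theorem was stated for an arbitrary totally ordered index set, it applies verbatim over $R^{\mathrm{op}}$. Its conclusion can then be dualized back, using $W^{\ast\ast}\cong W$ together with the fact that $W$ and $W^\ast$ have the same barcode.

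The first step is the translation between $\varphi$ and $\varphi^\ast$. By Lemma~\ref{lemma: dual modules and transposes}, with dual bases, $X^{\mathrm{T}}$ represents $\varphi^\ast \colon V^\ast \to U^\ast$. As noted after that lemma, the standard orders for $(U,V)$ are also standard orders for $(V^\ast,U^\ast)$. Ordered row reduction $SX$ of $X$ is exactly ordered column reduction $X^{\mathrm{T}}S^{\mathrm{T}}$ of $X^{\mathrm{T}}$, so the matrix $T$ in the kernel theorem becomes $S^{\mathrm{T}}$. Trivial entries correspond under transposition, so the nontrivial pivots are the same and the pivot map $p$ for $X^{\mathrm{T}}$ is $p'$ for $X$.

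The second step is bookkeeping with cuts. Passing to $R^{\mathrm{op}}$ sends a cut $(c^\downarrow,c^\uparrow)$ to $(c^\uparrow,c^\downarrow)$, so orders reverse, and the bar $\interval{b,d}$ of $R$ becomes the bar with birth $d$ and death $b$ in $R^{\mathrm{op}}$. In the kernel theorem applied to $\varphi^\ast$, the roles of domain and codomain are played by $V^\ast$ and $U^\ast$, so the domain index set is $I$. For $i \in P_I$, the kernel theorem sets the new birth to be the $R^{\mathrm{op}}$-death of the pivot bar, which is $b'_{p'(i)}$. For $i \notin P_I$, the new birth is the $R^{\mathrm{op}}$-birth of the $i$th bar of $V^\ast$, namely $d_i$. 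In both cases this is $d''_i$. The orderings also match under transposition:
\begin{itemize}
\item ordering the columns of $S^{\mathrm{T}}$ increasingly in $R^{\mathrm{op}}$ is ordering the rows of $S$ in reverse by $d''_i$;
\item ordering the rows of $S^{\mathrm{T}}$ in reverse by $R^{\mathrm{op}}$-death is ordering the columns of $S$ increasingly by $b_i$.
\end{itemize}
Hence ordered column reduction of $S^{\mathrm{T}}$ is ordered row reduction of $S$, giving $\widetilde S^{\mathrm{T}}$, and its pivot map $q$ is $q'$. The kernel theorem then gives three conclusions:
\begin{itemize}
\item $d''_i \le b_{q'(i)}$ in $R^{\mathrm{op}}$, that is, $b_{q'(i)} \le d''_i$ in $R$;
\item $\ker\varphi^\ast \cong \bigoplus_{i} \mathbb{I}_{R^{\mathrm{op}}}\interval{b_{q'(i)},d''_i}$, where the bar is written as an interval of $R$;
\item $\widetilde S^{\mathrm{T}}$ represents the inclusion $\ker\varphi^\ast \hookrightarrow V^\ast$.
\end{itemize}

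The final step dualizes again. Since $\coker\varphi \cong (\ker\varphi^\ast)^\ast$ and dual modules share barcodes, we get $\coker\varphi\cong\bigoplus_{i\in I}\mathbb{I}_R\interval{b_{q'(i)},d''_i}$. Applying Lemma~\ref{lemma: dual modules and transposes} to the inclusion, with dual bases, shows that $\widetilde S$ represents its dual. Under $V^{\ast\ast}\cong V$ and $(\ker\varphi^\ast)^\ast\cong\coker\varphi$, that dual is the quotient map $V\to\coker\varphi$, provided the decomposition of $\coker\varphi$ is taken to be the one dual to the decomposition of $\ker\varphi^\ast$.

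I expect the main obstacle to be the careful check that the isomorphisms are compatible. Specifically, the decomposition of $\coker\varphi$ obtained this way must be the one in which $\widetilde S$ represents the quotient map, and possible empty bars (where $b_{q'(i)}=d''_i$) must be handled consistently with the conventions for empty bars. The index translation itself, including tie-breaking in the orders, is routine but must be tracked precisely.
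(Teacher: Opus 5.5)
Your proposal is correct and takes the same route as the paper, which proves this statement in one line, ``by duality,'' from Theorem~\ref{theorem: barcodes of kernels} and Lemma~\ref{lemma: dual modules and transposes}. You make explicit the bookkeeping the paper leaves implicit, and it checks out: the reversal of cuts in $R^{\mathrm{op}}$, the identifications $T = S^{\mathrm{T}}$, $p = p'$ and $q = q'$, the matching of the required orders, and the identification $(\coker\varphi)^\ast \cong \ker\varphi^\ast$ compatibly with the quotient map.
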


\begin{theorem}[Factoring through kernels and cokernels]\label{theorem: factor through ker and coker}
    In the notation of Theorem~\ref{theorem: barcodes of kernels}, if $W$ is of finite type and $Y$ represents a morphism $\psi \colon W \to U$ with $\varphi \circ \psi = 0$, then $\widetilde{T}^{-1}Y$ represents the map $W \to \ker \varphi$ defined by restricting the codomain of $\psi$.
    Dually, in the notation of Theorem~\ref{theorem: barcodes of cokernels}, if $W'$ is of finite type and $Z$ represents a morphism $\psi' \colon V \to W'$ with $\psi' \circ \varphi = 0$, then $Z \widetilde{S}^{-1}$ represents the morphism $\coker \varphi \to W'$ induced by $\psi'$.
\end{theorem}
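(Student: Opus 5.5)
The statement to prove is Theorem~\ref{theorem: factor through ker and coker}. The plan is to reduce it to the faithfulness half of Theorem~\ref{theorem: hom set isomorphism}: compositions correspond to products of matrix classes, and a monomorphism can be cancelled on the left. I treat the kernel statement and obtain the cokernel statement by duality.

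Write $K = \ker \varphi \cong \bigoplus_{j \in J} \mathbb{I}\interval{b''_j, d'_{q(j)}}$ and $\iota \colon K \hookrightarrow U$, which Theorem~\ref{theorem: barcodes of kernels} says is represented by $\widetilde{T}$. Since $\varphi \circ \psi = 0$, the map $\psi$ factors uniquely as $\iota \circ \psi_0$ for some $\psi_0 \colon W \to K$. By Theorem~\ref{theorem: hom set isomorphism}, $\psi_0$ is represented by some matrix $Y_0$, and $\iota \circ \psi_0 = \psi$ gives $[\widetilde{T} Y_0] = [Y]$ in the hom-set for $W \to U$. The goal is then to show $[Y_0] = [\widetilde{T}^{-1} Y]$ in the hom-set for $W \to K$, for which two things are needed.
\begin{enumerate}
\item $\widetilde{T}^{-1} Y$ is a legitimate representative, meaning its entries vanish where the birth/death constraints for $W \to K$ require.
\item The difference $\widetilde{T}^{-1}(Y - \widetilde{T} Y_0)$ has only trivial nonzero entries.
\end{enumerate}

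For both points I would use the structure of $\widetilde{T}$. It is $T$, an upper triangular unipotent matrix indexed by $J$, after further ordered column operations, so it is invertible as a matrix, but in the orders used to reduce it it is only column-reduced and not triangular. The cleaner route avoids manipulating $\widetilde{T}^{-1}$ entrywise and uses Theorem~\ref{theorem: hom set isomorphism} in two steps.
\begin{enumerate}
\item Show that $\widetilde{T}^{-1}$ represents some morphism $\rho \colon \overline{U}' \to \overline{K}$ between suitable extended modules, for instance with bars extended to $+\infty$. On these, invertible matrices compatible with the birth orders give isomorphisms, as in the observation about automorphisms of $\overline{U}$.
\item Compute $\psi_0$ pointwise. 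For $[w] \in W_t$, the element $[Yw] \in U_t$ lies in $\ker\varphi_t$, and $\iota_t([\widetilde{T}^{-1}Yw]) = [\widetilde{T}\widetilde{T}^{-1}Yw] = [Yw]$.
\end{enumerate}
So the main obstacle is showing that $[\widetilde{T}^{-1}Yw]$ is a well-defined element of $K_t$. This needs two facts. First, entries of $\widetilde{T}^{-1}Yw$ in coordinates $j$ with $t < b''_j$ must vanish; this is where $\varphi\circ\psi=0$ enters. Second, the class must not depend on the representative $w$ modulo trivial entries. Once that holds, injectivity of $\iota_t$ forces $\psi_{0,t}([w]) = [\widetilde{T}^{-1}Yw]$, and Theorem~\ref{theorem: hom set isomorphism} identifies the class.

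For the first fact I would use the proof of Theorem~\ref{theorem: barcodes of kernels}. The columns of $\widetilde{T}$ with $b''_j \le t$ give a basis of $\ker\varphi_t$ modulo the bars of $U$ dead by time $t$, via the pivot positions $q(j)$. Writing $Yw$ in this basis then shows that the coefficients on columns with $b''_j > t$ are zero. For the second fact, a trivial change in $w$ changes $Yw$ only in coordinates dead at $t$. Because $\widetilde{T}$ is column-reduced with rows in reverse death order, the pivot structure then gives that $\widetilde{T}^{-1}$ moves this change only into coordinates $j$ with $d'_{q(j)} \le t$.

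The cokernel statement follows by dualizing. Lemma~\ref{lemma: dual modules and transposes} exchanges kernels with cokernels and ordered row reduction with ordered column reduction. It also sends $Z\widetilde{S}^{-1}$ to $(\widetilde{S}^{\mathrm{T}})^{-1}Z^{\mathrm{T}}$, which is exactly the kernel case applied to $\varphi^\ast$.
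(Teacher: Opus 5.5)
Your argument is correct in substance. It uses the same split as the paper's proof, a death-time condition and a birth-time condition on $\widetilde{T}^{-1}Y$, but you verify both pointwise in $t$ rather than on presentations.

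The paper checks two things. First, $\widetilde{T}^{-1}Y$ represents a morphism $\bigoplus_k \mathbb{I}\interval{c_k,\infty} \to \bigoplus_j \mathbb{I}\interval{d'_{q(j)},\infty}$: since $q$ is a permutation, $\widetilde{T}$ is block triangular between relation modules with the same birth times, and hence so is $\widetilde{T}^{-1}$. Second, it represents a morphism $\bigoplus_k \mathbb{I}\interval{a_k,\infty} \to \bigoplus_j \mathbb{I}\interval{b''_j,\infty}$: a diagram chase gives $\im Y \subseteq \im \widetilde{T}$, and the factorization $\widetilde{T} = TQ$, with $Q$ triangular in the $b''$-order, finishes.

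Your Fact~2 is the pointwise form of the first check. Your Fact~1 replaces $\widetilde{T} = TQ$ by the \emph{conclusion} of Theorem~\ref{theorem: barcodes of kernels}, used as a black box. To make Fact~1 precise, read ``basis of $\ker\varphi_t$ modulo dead bars'' as follows:
the columns with $b''_j \le t < d'_{q(j)}$ lift a basis of $\ker\varphi_t$, because their classes are $\iota_t([e_j])$. The columns with $d'_{q(j)} \le t$ all have $b''_j \le t$, since $b''_j \le d'_{q(j)}$. By your pivot count (the content of Fact~2) they span $\{e_i : d'_i \le t\}$. Together, the columns with $b''_j \le t$ therefore span the full preimage of $\ker\varphi_t$ among vectors valid at $t$. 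That preimage contains $Yw$ because $\varphi\circ\psi = 0$.

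Your route is more elementary and needs nothing from inside the kernel proof. The paper's route lands directly on the zero pattern required by Theorem~\ref{theorem: hom set isomorphism}. You still owe that translation, which your final appeal to Theorem~\ref{theorem: hom set isomorphism} glosses over. Apply your facts to $w = e_k$ at some $t \in \interval{a_k, b''_j}$, respectively $t \in \interval{c_k, d'_{q(j)}}$. This gives $(\widetilde{T}^{-1}Y)_{j,k} = 0$ whenever $b''_j > a_k$ or $d'_{q(j)} > c_k$.

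Incidentally, Fact~2 is redundant. Once $\widetilde{T}^{-1}Yw$ is known to be a valid vector, $\iota_t([\widetilde{T}^{-1}Yw]) = [Yw] = \psi_t([w])$. Injectivity of $\iota_t$ then already makes its class depend only on $[w]$.

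Drop step (1) of your ``cleaner route''. With $\overline{K} = \bigoplus_j \mathbb{I}\interval{b''_j,\infty}$ it is false, because $b''_j \ge b'_j$ goes the wrong way. For example, take $U = \mathbb{I}[0,2)$, $V = \mathbb{I}[0,1)$ and $X = [1]$. Then $\widetilde{T} = [1]$ and $b''_1 = 1$, but $[1]$ does not represent a morphism $\mathbb{I}[0,\infty) \to \mathbb{I}[1,\infty)$. Only $\widetilde{T}^{-1}Y$ respects birth times, and only because $\varphi \circ \psi = 0$. Your Fact~1 uses exactly that, and nothing else in your argument depends on step (1).
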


Note that Theorems~\ref{theorem: barcodes of kernels} and~\ref{theorem: barcodes of cokernels} allow empty bars in the barcodes of the kernel and cokernel, which we will see arise naturally in the proofs.
The resulting invertible matrices $\widetilde{T}$ and $\widetilde{S}$ are useful for computations with maps that factor through $\ker \varphi$ or $\coker \varphi$, as described in Theorem~\ref{theorem: factor through ker and coker}.
The empty bars can of course be removed when not necessary, in which case the corresponding rows or columns of matrices must be removed as well.

\begin{proof}[Proof of Theorem~\ref{theorem: barcodes of images}]
    For intuition, and for variety, we first give a concrete proof of (\ref{item: image barcode 2}) based on matrix and vector entries, then follow with an essentially dual proof of (\ref{item: image barcode 1}) presented more abstractly. 
    Let $XT$ be obtained by ordered column reduction of $X$. 
    For any $t$ and any vector $u = \{u_j\}_j$ representing an element of $U_t$ in the given interval decomposition, $Tu$ and $T^{-1}u$ also represent elements of $U_t$.
    This follows from the ordering of the bars and the fact that $T$ and $T^{-1}$ are upper triangular: if $u_j$ is nonzero, then $\interval{b'_j, d'_j}$ is born before $t$, as is any $\interval{b'_{j'}, d'_{j'}}$ with $j'<j$, so column $j$ of either $T$ or $T^{-1}$ indeed represents an element of $U_t$.
    Any element of $\im \varphi_t$ can thus be represented by a vector of the form $XTu$ where $u$ represents an element of $U_t$.
    The columns of $XT$ that are born before $t$ (i.e., each column $j$ such that $t \in \interval{b'_j, \infty}$) and represent nonzero elements of $V_t$ in fact represent a basis of $\im \varphi_t$: they are linearly independent because the submatrix of $XT$ consisting of columns born before $t$ and rows that die after $t$ is in column reduced form.
    For $j \in P_J$, column $j$ represents one of these basis elements exactly when $t \in \interval{b'_j, d_{p(j)}}$, since its pivot row has the latest death time among its nonzero rows.
    We thus find that the pivot columns of $XT$ represent a map $\iota \colon \bigoplus_{j \in P_J} \mathbb{I}\interval{b'_j, d_{p(j)}} \to V$ such that each $\iota_t$ is injective and $\im \iota_t = \im \varphi_t$, so we use this as the interval decomposition of $\im \varphi$.
    To represent $\varphi$ with codomain restricted to $\im \varphi$ expressed in this interval decomposition, given any $[u] \in U_t$, we must find the element mapped to $\varphi_t([u]) = [Xu]$ by $\iota$.
    Like above, the entries of $T^{-1}u$ indexed by $P_J$ represent an element of $\im \varphi_t$ in the chosen interval decomposition, and since the entries correspond to the pivot columns of $XT$, its image under $\iota$ is represented by $XTT^{-1}u = Xu$ in $V_t$.
    Therefore, the rows of $T^{-1}$ indexed by $P_J$ represent $\varphi$ with codomain restricted to the image.

    For the proof of (\ref{item: image barcode 1}), we let $SX$ be obtained by ordered row reduction of $X$, then further let $SXT'$ be obtained by ordered column reduction of $SX$.
    Let $\overline{\varphi}$ be the map $\overline{U} \to \underline{V}$ represented by $SXT'$, which has pivots as its only nonzero entries.
    Then $\im \overline{\varphi} \cong \bigoplus_{i \in P_I} \mathbb{I}\interval{b'_{p'(i)}, d_i}$, each interval module $\mathbb{I}\interval{b'_{p'(i)}, d_i}$ being the image of the component $\mathbb{I}\interval{b'_{p'(i)}, \infty}$ of $\overline{U}$.
    The map $V \to \underline{V}$ represented by $S$ is a monomorphism, as it is the composition of the monomorphism $V \to \underline{V}$ represented by the identity matrix and the automorphism of $\underline{V}$ represented by $S$, and similarly, the map $\overline{U} \to U$ represented by $T'$ is an epimorphism.
    This implies $\im \varphi \cong \im \overline{\varphi}$, where the map $V \to \underline{V}$ represented by $S$ restricts to an isomorphism $\im \varphi \to \im \overline{\varphi}$.
    Express the images as monomorphisms, as in the following diagram\footnote{Here and in later diagrams, we write morphisms of persistence modules as matrices representing them.
    To more precisely follow the categorical perspective of Section~\ref{section: a category of barcodes}, the persistence modules can be replaced by their barcodes and the matrices by their equivalence classes, giving a diagram in the category of barcodes.}, with the monomorphism $\im \overline{\varphi} \hookrightarrow \underline{V}$ represented by the columns of the $I \times I$ identity matrix indexed by $P_I$.
    \[
    \begin{tikzcd}
    U \arrow[r, "X"] \arrow[rd, dashed] & V \arrow[r, "S", hook]                                          & \underline{V} \\
                                        & \im \overline{\varphi} \arrow[ru, hook] \arrow[u, dashed, hook] &              
    \end{tikzcd}
    \]
    The map from $\im \overline{\varphi}$ to $V$ represented by the columns of $S^{-1}$ indexed by $P_I$ makes the right triangle commute, so it is a monomorphism onto the image of $\varphi$.
    Representing the map $U \to \im \overline{\varphi}$ by the rows of $SX$ indexed by $P_I$ makes the outermost triangle commute, by the row reduced form of $SX$.
    The commutativity of the right triangle and the outermost triangle and the fact that $S$ represents a monomorphism $V \to \underline{V}$ imply the left triangle commutes, which completes the proof.
\end{proof}

Theorem~\ref{theorem: barcodes of images} shows that for $j \in P_J$, the bar $\interval{b'_j, d'_j}$ of the domain accounts for a bar $\interval{b'_j, d_{p(j)}}$ of the image, which can be thought of as a lower portion of the bar of the domain.
A naive guess for the barcode of the kernel would consist of the remaining upper portions $\interval{d_{p(j)}, d'_j}$ of these bars and the rest of the bars $\interval{b'_j, d'_j}$ for $j \in J \setminus P_J$, but this turns out to be incorrect.
A simple counterexample is the map $\mathbb{I}[0,4) \oplus \mathbb{I}[1,3) \to \mathbb{I}[0,2)$ represented by the matrix $[\hspace{.05cm} 1 \hspace{.2cm} 1 \hspace{.05cm}]$:
the barcode of its kernel is $\{[1,4), [2,3)\}$, with, for instance, generators represented by the vectors 
$[\hspace{.05cm} 1 \hspace{.2cm} -1 \hspace{.05cm}]^{\mathrm{T}}$ and $[\hspace{.05cm} 0 \hspace{.2cm} 1 \hspace{.05cm}]^{\mathrm{T}}$.

However, rank-nullity still holds for each $\varphi_t$, so we know the number of bars of the domain containing $t$ is the sum of the numbers of bars of the kernel and image containing $t$.
From this, it is not surprising that the set of death times of bars of the kernel will be the set of $d'_j$, while the set of birth times will consist of $d_{p(j)}$ for $j \in P_J$ and $b'_j$ for $j \in J \setminus P_J$.
How the birth and death times are paired will depend on $\varphi$, and this is reflected by the second matrix reduction in Theorem~\ref{theorem: barcodes of kernels}.

\begin{proof}[Proof of Theorem~\ref{theorem: barcodes of kernels}]
    Column reduce $X$ in order to produce $XT$, then further row reduce $XT$ in order to produce $S'XT$.
    Then $S'XT$ has pivots as its only nonzero entries, so the kernel of the map $\overline{\varphi} \colon \overline{U} \to \underline{V}$ represented by $S'XT$ can be expressed as a direct sum of the kernels of the compositions ${\mathbb{I}\interval{b'_j, \infty} \hookrightarrow \overline{U} \xrightarrow[]{\overline{\varphi}} \underline{V}}$.
    For $j \in P_J$, this restricts to a nonzero map $\mathbb{I}\interval{b'_j, \infty} \to \mathbb{I}\interval{-\infty, d_{p(j)}}$, which has kernel $\mathbb{I}\interval{d_{p(j)}, \infty}$, and for $j \notin P_J$, the kernel is the entire interval module $\mathbb{I}\interval{b'_j, \infty}$.
    Thus, 
    \[
    \ker \overline{\varphi} \cong \bigoplus_{j \in J} \mathbb{I} \interval{b''_j, \infty},
    \]
    with the birth times $b''_j$ defined as in the theorem, and the inclusion $\ker \overline{\varphi} \hookrightarrow \overline{U}$ is represented by the identity matrix.
    Since the map $V \to \underline{V}$ represented by $S'$ is a monomorphism and the map $\overline{U} \to U$ represented by $T$ is an epimorphism, $\ker \varphi$ is the image of the map $\ker \overline{\varphi} \to U$ represented by $T$, so we apply Theorem~\ref{theorem: barcodes of images}(\ref{item: image barcode 2}).
    We reorder the rows and columns of $T$ to have a standard order for the map $\ker \overline{\varphi} \to U$ and column reduce with respect to this order to define $\widetilde{T}$.
    Since $T$ is an invertible matrix, every column of $\widetilde{T}$ has a pivot (possibly trivial), so let $q(j)$ be the row index of the pivot in column $j$.
    Identifying the nontrivial pivot positions as those $q(j),j$ such that $b''_j<d'_{q(j)}$,   Theorem~\ref{theorem: barcodes of images}(\ref{item: image barcode 2}) shows that
    \begin{equation}\label{equation: first barcode of kernel}
        \ker \varphi \, \cong \bigoplus_{\substack{j \in J \\ b''_j<d'_{q(j)}}} \mathbb{I} \interval{b''_j, d'_{q(j)}}
    \end{equation}
    and that $\ker \varphi \hookrightarrow U$ is represented by the corresponding columns of $\widetilde{T}$.
    The rest of this proof shows that for $j$ satisfying $b''_j \geq d'_{q(j)}$, we in fact have $b''_j = d'_{q(j)}$.
    Because of this, it is natural to include the empty bars in the direct sum and represent the inclusion by the entire matrix $\widetilde{T}$, as stated in the theorem.

    The comparison of $b''_j$ and $d'_{q(j)}$ applies most easily to persistence modules indexed by a subset of $\mathbb{Z}$: the method is to compare total bar lengths in the kernel and the image using rank-nullity of each $\varphi_t$.
    To apply it to the general case of modules indexed by $R$, let $C$ be the set of all birth and death times of the bars of $U$ and $V$ (not a multiset, so an endpoint shared by multiple bars only appears once in $C$).
    Recall that these birth and death times are cuts of $R$.
    Write the elements of $C$ in order as $c_0 < c_1 < \dots < c_N$ and let $\ell \colon C \to \{0,1,\dots, N\}$ be defined by $\ell(c_n) = n$.
    Since the $c_n$ are distinct cuts of $R$, there must exist $t_1, t_2, \dots, t_N \in R$ such that $t_n \in \interval{c_{n-1}, c_{n}}$ for each $n$.
    Let $\mathcal{L}(U) = \sum_{n=1}^{N} \dim U_{t_n}$.
    We see that $\dim U_{t_n}$ is the number of bars of $U$ containing $\interval{c_{n-1}, c_{n}}$, so $\mathcal{L}(U)$ can be thought of as a sum of artificial ``lengths'' of bars, where a bar $\interval{c_n, c_m}$ is assigned a length of $\ell(c_m)-\ell(c_n) = m-n$.
    Similarly, let $\mathcal{L}(\ker \varphi) = \sum_{n=1}^{N} \dim \ker \varphi_{t_n}$ and $\mathcal{L}(\im \varphi) = \sum_{n=1}^{N} \dim \im \varphi_{t_n}$.
    
    Using Theorem~\ref{theorem: barcodes of images}(\ref{item: image barcode 2}) for $\im \varphi$ and the bars of $\ker \varphi$ in Equation~\ref{equation: first barcode of kernel}, summing the contributions of the bars gives
    \[
    \mathcal{L}(U) = 
    \sum_{j \in J} \Big( \ell(d'_j)-\ell(b'_j) \Big),
    \]
    \[
    \mathcal{L}(\ker \varphi)= 
    \sum_{\substack{j \in J \\ b''_j<d'_{q(j)}}} \Big( \ell(d'_{q(j)})-\ell(b''_j) \Big),
        \hspace{1cm}    
    \mathcal{L}(\im \varphi) = 
    \sum_{j \in P_J} \Big( \ell(d_{p(j)})-\ell(b'_j) \Big).
    \]
    On the other hand, applying rank nullity to each $\varphi_{t_n}$ gives
    \begin{align*}
        \mathcal{L}(\ker \varphi) &=
        \mathcal{L}(U) - \mathcal{L}(\im \varphi)\\
        &=\sum_{j \in J} \Big( \ell(d'_j)-\ell(b'_j) \Big) - 
        \sum_{j \in P_J} \Big( \ell(d_{p(j)})-\ell(b'_j) \Big) \\
        &= \sum_{j \in J} \Big( \ell(d'_{j})-\ell(b''_j) \Big)\\
        &= \sum_{j \in J} \Big( \ell(d'_{q(j)})-\ell(b''_j) \Big).
    \end{align*}
    The final sum agrees with the one for the kernel above, except that now all $j \in J$ are included in the sum, so we must have
    \[
    \sum_{\substack{j \in J \\ b''_j \geq d'_{q(j)}}} \Big( \ell(d'_{q(j)})-\ell(b''_j) \Big) = 0.
    \]
    But $\ell(d'_{q(j)})-\ell(b''_j)$ is nonpositive when $b''_j \geq d'_{q(j)}$, so since their sum is zero, each term must satisfy ${\ell(d'_{q(j)})-\ell(b''_j) = 0}$ and thus $b''_j = d'_{q(j)}$.
\end{proof}

\begin{proof}[Proof of Theorem~\ref{theorem: barcodes of cokernels}]
    By duality, the representation of cokernels follows from Theorem~\ref{theorem: barcodes of kernels} and the matrix representations of dual maps in Lemma~\ref{lemma: dual modules and transposes}.
\end{proof}

\begin{proof}[Proof of Theorem~\ref{theorem: factor through ker and coker}]
We prove the first statement of the theorem, and the second follows by duality using Lemma~\ref{lemma: dual modules and transposes}.
We just need to show that $\widetilde{T}^{-1}Y$ represents a morphism $W \to \ker \varphi$ with $\ker \varphi$ represented as in Theorem~\ref{theorem: barcodes of kernels}.
It will then follow that this morphism is $\psi$ with restricted codomain, since its composition with the inclusion $\ker \varphi \hookrightarrow U$ is represented by $\widetilde{T} \widetilde{T}^{-1}Y = Y$.
We let $W \cong \bigoplus_{k \in K} \mathbb{I} \interval{a_k, c_k}$, and we work with presentations of $U$, $W$, and $\ker \varphi$ like in the proof of Theorem~\ref{theorem: hom set isomorphism}.
Since $\widetilde{T}$ represents the inclusion $\ker \varphi \hookrightarrow U$ and $Y$ represents a morphism $W \to U$, we get the following commutative diagram, in which the rows are short exact sequences with morphisms represented by identity matrices.
\[
\begin{tikzcd}
{\bigoplus_{j} \mathbb{I}\interval{d'_{q(j)}, \infty}} \arrow[r, hook] \arrow[d, "\widetilde{T}"'] & {\bigoplus_{j} \mathbb{I}\interval{b''_j, \infty}} \arrow[r, two heads] \arrow[d, "\widetilde{T}"'] & \ker \varphi \arrow[d, "\widetilde{T}"] \\
{\bigoplus_{j} \mathbb{I}\interval{d'_j, \infty}} \arrow[r, hook]                                  & {\bigoplus_{j} \mathbb{I}\interval{b'_j, \infty}} \arrow[r, two heads]                              & U                                       \\
{\bigoplus_{k} \mathbb{I}\interval{c_k, \infty}} \arrow[r, hook] \arrow[u, "Y"]                    & {\bigoplus_{k} \mathbb{I}\interval{a_k, \infty}} \arrow[r, two heads] \arrow[u, "Y"]                & W \arrow[u, "Y"']                      
\end{tikzcd}
\]

To show that $\widetilde{T}^{-1}Y$ represents a morphism $W \to \ker \varphi$, it is enough to show that it represents a morphism from the lower module to the upper module in both the left and middle columns.
In the left column, since $q$ is a permutation, the upper two modules are isomorphic.
Since $\widetilde{T}$ represents a morphism, it must be block upper triangular when the bars of the domain and codomain are ordered by their shared birth times, where rows are in the same block when they have the same birth times and similarly for columns.
Therefore $\widetilde{T}^{-1}$ also has this same block upper triangular form, so it represents a morphism in the reverse direction.
Composing with the morphism represented by $Y$ below it shows that $\widetilde{T}^{-1}Y$ represents a morphism $\bigoplus_{k} \mathbb{I}\interval{c_k, \infty} \to \bigoplus_{j} \mathbb{I}\interval{d'_{q(j)}, \infty}$.

In the center column, the image of $Y$ must be contained in the image of $\widetilde{T}$: this can be checked by a diagram chase using the observation that the image of $\widetilde{T}$ contains $\bigoplus_j \mathbb{I} \interval{d'_j, \infty}$, along with assumption that in the right column, the image of $Y$ is contained in the image of $\widetilde{T}$.
Since $\widetilde{T}$ was defined by column reducing $T$, it can be factored as $\widetilde{T} = TQ$, where $Q$ is upper triangular when ordered by $b''_j$ and $T$ is upper triangular when ordered by $b'_j$.
The upper triangular forms imply $Q$ and $T$ represent automorphisms of the upper and center modules respectively in the center column of the diagram, which let us express the right two columns as follows.
\[
\begin{tikzcd}
{\bigoplus_{j} \mathbb{I}\interval{b''_j, \infty}} \arrow[r, "Q^{-1}", two heads] \arrow[d, hook] & \ker \varphi \arrow[d, "\widetilde{T}"] \\
{\bigoplus_{j} \mathbb{I}\interval{b'_j, \infty}} \arrow[r, "T", two heads]                       & U                                       \\
{\bigoplus_{k} \mathbb{I}\interval{a_k, \infty}} \arrow[r, two heads] \arrow[u, "T^{-1}Y"]        & W \arrow[u, "Y"']                      
\end{tikzcd}
\]
Here the image of the morphism represented by $T^{-1}Y$ is contained in $\bigoplus_{j} \mathbb{I}\interval{b''_j, \infty}$, so $T^{-1}Y$ also represents a morphism $\bigoplus_{k} \mathbb{I}\interval{a_k, \infty} \to \bigoplus_{j} \mathbb{I}\interval{b''_j, \infty}$. 
As $Q^{-1}$ represents an automorphism of $\bigoplus_{j} \mathbb{I}\interval{b''_j, \infty}$, we conclude that $Q^{-1} T^{-1}Y = \widetilde{T}^{-1} Y$ represents a morphism $\bigoplus_{k} \mathbb{I}\interval{a_k, \infty} \to \bigoplus_{j} \mathbb{I}\interval{b''_j, \infty}$, as required.
\end{proof}

\section{Computation of Functorial Persistent Homology Barcodes}\label{section: functorial PH algorithm}

Here we apply the matrix techniques developed so far to the computation of persistent homology.
We begin with a review of the classic persistent homology algorithm in Section~\ref{subsection: classic PH algorithm} and discuss the similarities with our matrix-based proof of algebraic stability (Theorem~\ref{theorem: algebraic stability finite modules}) in Section~\ref{subsection: comparison with algebraic stability}.
We then describe how the classic persistent homology algorithm can be extended to compute induced morphisms of persistent homology in Section~\ref{subsection: induced maps for classic PH algorithm}, showing that functorial persistent homology barcodes, in the sense of Section~\ref{section: a category of barcodes}, are computable.
In Section~\ref{subsection: computation with general chain complexes}, we give a method for computing persistent homology and induced morphisms that applies to general chain complexes of persistence modules, relaxing the requirement of chains with infinite death times in the classic persistent homology algorithm.
See~\cite{skraba2013persistence,dey2024efficient,dey2014computing} for other approaches to computing persistent homology with general chain complexes.
In keeping with the theme of this paper, we will provide matrix-based procedures here -- we leave to future work the development of practical algorithms, say for filtrations of simplicial complexes, in which it may be best to avoid constructing the matrices explicitly.

\subsection{The classic persistent homology algorithm}\label{subsection: classic PH algorithm}

We review the classic matrix operations for computing persistent homology, making the evident translation to the language of this paper.
We mostly follow the approach of~\cite{zomorodian_carlsson_computing}, where a more thorough explanation can be found; also see~\cite{edelsbrunner_and_harer}.
Let $C$ be a chain complex consisting of persistence modules $C_n$ of finite type of the form
\[
C_n \cong \bigoplus_{j \in J_n} \mathbb{I}\interval{b_{n,j}, \infty},
\]
where $b_{n,j} < \infty$ in each case.
Such chain complexes arise from increasing filtrations of finite simplicial complexes, for instance.
Both the ordering of $J_n$ by birth times and the reverse will be useful: we will state the orders when they are used.
Because of the infinite death times, all entries of the matrix representing the boundary map $\partial_n \colon C_n \to C_{n-1}$ are nontrivial; we also let $\partial_n$ denote this matrix. 
Furthermore, the infinite death times imply that row and column operations can be interpreted as changes of decomposition when they are performed in the right order: an invertible matrix that is upper triangular when ordered by birth times represents an automorphism of $C_n$.

The barcode of the persistent homology module $H_n = \ker \partial_n / \im \partial_{n+1}$ can be computed using appropriate changes of decomposition.
These are summarized in the diagram below, which is best viewed ordering $C_{n+1}$ by birth times and $C_n$ in reverse by birth times.
With these orderings, $T_{n}$ and $S_{n}$ are lower triangular, whereas $T_{n+1}$ is upper triangular (the order of $C_{n+1}$ will be reversed in the next analogous diagram for $\partial_{n+2}$, making $T_{n+1}$ lower triangular with respect to the reversed order). 
Thus, $T_n$, $S_n$, and $T_{n+1}$ can be understood as changes of decomposition.
The matrices $S_n$ and $T_{n+1}$ will be defined by fully reducing $M_{n+1}$, so the homology will be read off from the pivots of the reduced matrix $D_{n+1}$.

\[
\begin{tikzcd}
C_{n}                                        & C_{n+1} \arrow[l, "\partial_{n+1}"'] \arrow[ld, "M_{n+1}"] \\
C_{n} \arrow[u, "T_{n}"] \arrow[d, "S_{n}"'] & C_{n+1} \arrow[ld, "D_{n+1}"] \arrow[u, "T_{n+1}"']        \\
C_{n}                                        &                                                           
\end{tikzcd}
\]

The computations are performed inductively in increasing dimension, beginning by letting $M_0$ be a zero matrix and $T_0$ be the identity.
Inductively, suppose $D_n = S_{n-1} M_n T_{n}$ was obtained by ordered row and column reduction of $M_n$ with its columns ordered by birth times and its rows in reverse by birth times.
Let $P_{n}$ be the set of $j \in J_n$ such that column $j$ of $D_n$ contains a pivot.
Then $M_{n+1} = T_n^{-1}\partial_{n+1}$ can be found by changing the rows of $\partial_{n+1}$ indexed by $P_{n}$ to zero (analogously to \cite[Lemma~4.2]{zomorodian_carlsson_computing}).
This is proved by noting 1) the row operations required to left multiply by $T^{-1}_{n}$ never alter rows indexed by $J_n \setminus P_{n}$ and 2) the rows of $M_{n+1}$ indexed by $P_{n}$ must be zero since the columns of $M_n T_n$ indexed by $P
_n$ are linearly independent and $M_n T_n M_{n+1} = T_{n-1}^{-1} \partial_n \partial_{n+1} = 0$. 
In practice, these rows of zeros can simply be deleted.
Order the columns of $M_{n+1}$ by birth times and the rows in reverse by birth times, and perform ordered row and column reduction on $M_{n+1}$ so that $D_{n+1} = S_{n} M_{n+1} T_{n+1}$ has pivots as its only nonzero entries.
The barcode of $H_n$ then has a bar $\interval{b_{n,j}, \infty}$ for each $j \in J_n \setminus P_n$ such that row $j$ of $D_{n+1}$ has no pivot and a bar $\interval{b_{n,j}, b_{n+1, p(j)}}$ for each $j \in J_n \setminus P_n$ such that $D_{n+1}$ has a pivot in position $j,p(j)$, where in the latter case the bar may be empty.
In $\ker \partial_n$, these bars correspond to the columns of $T_n S_n^{-1}$ indexed by $J_n \setminus P_n$, so these columns provide representative cycles for the persistent homology module.
If representative cycles are not needed, then row reduction of $M_{n+1}$ can be skipped, as the pivots of $D_{n+1}$ can be found after just column reduction: the pivot positions are all that are needed to find the barcode of $H_n$ and to perform the next inductive step.

\subsection{Comparison of the persistent homology algorithm and algebraic stability}\label{subsection: comparison with algebraic stability}

The main step in the computation of persistent homology barcodes above is column reduction of a matrix, and the key idea in our proof of algebraic stability, Theorem~\ref{theorem: algebraic stability finite modules}, is also column reduction.
The similarity can be explained by Theorem~\ref{theorem: barcodes of images}(\ref{item: image barcode 2}), which gives the algebraic meaning of column reduction.

In the proof of algebraic stability, $X$ is column reduced to give $XT$.
Theorem~\ref{theorem: barcodes of images} (\ref{item: image barcode 2}) shows each bar in the domain corresponding to a nontrivial pivot gives birth to a bar in the image, and the inclusion of the image into the codomain is represented by the pivot columns of $XT$.
We composed with the other morphism of the interleaving to verify the necessary bars of the image, those that came from sufficiently long bars of the domain, survive long enough.
Since Theorem~\ref{theorem: barcodes of images} shows the bars of the image are truncated bars of the codomain, this realizes the intuition of Figure~\ref{fig: interleaving} on all long enough bars of the domain simultaneously.
Specifically, the column-reduced form of $XT$ allows each bar of the image to be matched to the latest-dying bar in the codomain in which it has a nonzero component.
Intuitively, each time a new bar is introduced to the image, it must ``see'' a part of the codomain not seen by previous bars, and it is matched with the optimal bar of the codomain it can see.

For the computation of persistent homology, the column reduction serves to compute both an image and a kernel. 
In the steps above, we have already column reduced $M_n$, and $\ker \partial_n$ corresponds to the columns of the reduced matrix $M_n T_n$ indexed by $J_n \setminus P_n$.
To compute $H_n$, we must take the quotient by the image of the boundary map $\partial_{n+1} \colon C_{n+1} \to \ker \partial_n$.
Instead of using Theorem~\ref{theorem: barcodes of cokernels}, the particular setting of chains with infinite death times allows us to compute the barcode of the quotient directly from the image.
Theorem~\ref{theorem: barcodes of images} (\ref{item: image barcode 2}) shows the image can be computed by column reducing $M_{n+1}$, leading to the description of $H_n$ in terms of pivots of the reduced matrix $M_{n+1}T_{n+1}$.
By the ordering of bars, each time a new bar is introduced into the image, on homology, it leads to the death of the youngest bar it can see; this viewpoint of processing one bar at a time agrees with the description of the persistent homology algorithm in terms of pairing~\cite{edelsbrunner2002topological,edelsbrunner_and_harer}.

Thus, both applications of column reduction can be understood in terms of images, and in both cases, pivots are used to match bars of the image with bars of the codomain based on orderings of birth and death times.

\subsection{Induced maps in the classic persistent homology algorithm}\label{subsection: induced maps for classic PH algorithm}

Using the notation of Section~\ref{subsection: classic PH algorithm}, we now move on to the computation of induced maps on homology.
Suppose we additionally have a map of chain complexes $C \to C'$, with each map $C_n \to C'_n$ represented by a matrix $X_n$, and let $J'_n$, $P'_n$, $T'_n$, $S'_n$, and $M'_n$ be defined analogously to $J_n$, $P_n$, $T_n$, $S_n$, and $M_n$ above.
The induced map on homology $H_n \to H'_n$ is computed using the same changes of decomposition, as in the diagram.
\[
\begin{tikzcd}
C_n \arrow[r, "X_n"]                   & C'_n                                      \\
C_n \arrow[u, "T_n"] \arrow[d, "S_n"'] & C'_n \arrow[u, "T'_n"'] \arrow[d, "S'_n"] \\
C_n \arrow[r, dashed]                  & C'_n                                     
\end{tikzcd}
\]
Since the generating cycles correspond to the bars indexed by $J_n \setminus P_n$ and $J'_n \setminus P'_n$ in the lower copies of $C_n$ and $C'_n$, the map on homology is represented by the submatrix of $S'_n (T'_n)^{-1} X_n T_n (S_n)^{-1}$ consisting of columns indexed by $J_n \setminus P_n$ and rows indexed by $J'_n \setminus P'_n$.
Note that this representation includes any empty bars produced in the computation of barcodes described above.

Beginning with $X_n$, the matrix products can be computed by the same row and column operations used to compute $H_n$ and $H'_n$, and the order can be chosen so the reductions are performed at the same time as they are for $H_n$ and $H'_n$.
The following steps show the computation in which the operations to reduce $M'_{n+1}$ are used before those for $M_{n+1}$, which is the natural order to use when $H'_n$ is computed before $H_n$. 
\begin{enumerate}
    \item\label{item: classic PH induced map step 1} Delete the rows of $X_n$ indexed by $P'_n$ and denote the remaining matrix by $X'_n$.
    \item\label{item: classic PH induced map step 2} Perform the column operations used to reduce $M_n$ on $X'_n$, then delete the columns indexed by $P_n$.  Let the resulting matrix be $X''_n$.
    \item\label{item: classic PH induced map step 3} Perform the row operations used to reduce $M'_{n+1}$ on $X''_n$.
    Let the resulting matrix be $X'''_n$.
    \item\label{item: classic PH induced map step 4} For all row operations used to reduce $M_{n+1}$, perform the inverse column operations on $X'''_n$ (that is, when $\lambda$ times row $j_1$ is added to row $j_2$ in the reduction of $M_{n+1}$, subtract $\lambda$ times column $j_2$ from column $j_1$).
    The resulting matrix represents the induced map on homology in terms of the barcodes computed for $H_n$ and $H'_n$.
\end{enumerate}

After Step~\ref{item: classic PH induced map step 1}, $X'_n$ is the submatrix of $(T'_n)^{-1} X_n$ consisting of the rows indexed by $J'_n \setminus P'_n$, since, as in the classic persistent homology algorithm, the row operations required to compute left multiplication by $(T'_n)^{-1}$ never alter rows indexed by $J'_n \setminus P'_n$.
Step~\ref{item: classic PH induced map step 2} computes right multiplication by $T_n$, so $X''_n$ is the submatrix of $(T'_n)^{-1} X_n T_n$ consisting of the rows indexed by $J'_n \setminus P'_n$ and the columns indexed by $J_n \setminus P_n$.
Step~\ref{item: classic PH induced map step 3} computes left multiplication by $S'_n$ and can be performed on $X''_n$ since the operations only involve the rows indexed by $J'_n \setminus P'_n$; thus, $X'''_n$ is the submatrix of $S'_n (T'_n)^{-1} X_n T_n$ indexed by $J'_n \setminus P'_n$ and $J_n \setminus P_n$.
Similarly, Step~\ref{item: classic PH induced map step 4} computes right multiplication by $(S_n)^{-1}$, so the resulting matrix is the submatrix of $S'_n (T'_n)^{-1} X_n T_n (S_n)^{-1}$ indexed by $J'_n \setminus P'_n$ and $J_n \setminus P_n$, as required.
Due to the matrix reductions, the method has time complexity $O(m^3)$, where $m$ is the size of the larger of the index sets $J_n$ and $J'_n$.
Heuristically, since the matrix operations are reused from the computations of the barcodes, the computation of the map on homology can be expected to take about the same amount of time as the computation of the two barcodes and their representative cycles.

\subsection{Computation of persistent homology with general chain complexes}\label{subsection: computation with general chain complexes}

We conclude with a method for computing persistent homology and induced maps in a more general setting.
The representations of kernels and cokernels in Section~\ref{section: computations} can be used to compute persistent homology even when the chain modules have bars with finite death times.
That is, we now start with an arbitrary chain complex $C$ of persistence modules of finite type with given interval decompositions:
$C_n \cong \bigoplus_{j \in J_n} \mathbb{I}\interval{b_{n,j}, d_{n,j}}$.
Such a complex can arise from a filtration of simplicial complexes that is not necessarily increasing, that is, an $R$-indexed diagram of simplicial complexes with no requirement that structure maps are inclusions.

The computation is summarized in the diagram below, analogous to the one for the simpler case in Section~\ref{subsection: classic PH algorithm}.
Beginning with the boundary map $\partial_n$ and setting $Z_n = \ker \partial_n$, we aim to compute the inclusion $Z_n \hookrightarrow C_n$, the following boundary map with codomain restricted $C_{n+1} \to Z_n$, and finally the cokernel $H_n$ of this map, along with the quotient map $Z_n \to H_n$.
\[
\begin{tikzcd}
C_{n}                                                              & C_{n+1} \arrow[l, "\partial_{n+1}"'] \arrow[ld, "\widetilde{M}_{n+1}"] \\
Z_n \arrow[u, "\widetilde{T}_{n}"] \arrow[d, "\widetilde{S}_{n}"'] &                                                                        \\
H_n                                                                &                                                                       
\end{tikzcd}
\]
Applying Theorem~\ref{theorem: barcodes of kernels} to $\partial_n$ gives $Z_n$ and a matrix $\widetilde{T}_{n}$ representing the inclusion $Z_n \hookrightarrow C_n$.
Theorem~\ref{theorem: factor through ker and coker} then shows that $\widetilde{M}_{n+1} = (\widetilde{T}_n)^{-1} \partial_{n+1}$ represents the boundary map with restricted codomain.
Applying Theorem~\ref{theorem: barcodes of cokernels} to $\widetilde{M}_{n+1}$ gives $H_n$ and a matrix $\widetilde{S}_n$ representing the quotient map $Z_n \to H_n$.
Recall that the direct sums in Theorems~\ref{theorem: barcodes of kernels} and~\ref{theorem: barcodes of cokernels} may include empty bars, so these are included here as well.

The matrix products can be computed by row and column operations, although they are more extensive than before.
The computation of kernels and cokernels by Theorems~\ref{theorem: barcodes of kernels} and~\ref{theorem: barcodes of cokernels} require two matrix reductions each.
For the kernel, applying Theorem~\ref{theorem: barcodes of kernels} will first column reduce $\partial_n$ to obtain $\partial_n T_n$, and $T_n$ is recorded by applying the same operations to the identity matrix.
Then $\widetilde{T}_n$ is found by reordering $T_n$ as in the theorem and column reducing again.
To compute $\widetilde{M}_{n+1} = (\widetilde{T}_n)^{-1} \partial_{n+1}$, we apply the inverse row operations of both reductions to $\partial_{n+1}$.
Similarly, $\widetilde{S}_n$ is computed in two row reductions, and the inverse column operations can be used to compute $\widetilde{T}_n \widetilde{S}_n^{-1}$ if representative cycles are required.
Note that $\widetilde{T}_n \widetilde{S}_n^{-1}$ does not generally represent a morphism $H_n \to C_n$; it can instead be used to represent a morphism $\overline{H}_n \to C_n$, where $\overline{H}_n$ is defined by extending the bars of $H_n$ to have infinite death times.
The time complexity of these computations is again cubic in the size of the chain modules.
They do, however, require a greater number of matrix reductions than the classic persistent homology algorithm and should be expected to be proportionally slower.

Finally, we check that induced maps on homology can also be computed in this setting.
Suppose again that we have another chain complex $C'$, a map $C \to C'$ with each $C_n \to C'_n$ represented by a matrix $X_n$, and analogous matrices $\widetilde{T}'_n$ and $\widetilde{S}'_n$.
\[
\begin{tikzcd}
C_{n} \arrow[r, "X_n"]                                                               & C'_{n}                                                            \\
Z_n \arrow[u, "\widetilde{T}_{n}"] \arrow[d, "\widetilde{S}_{n}"'] \arrow[r, dashed] & Z'_n \arrow[d, "\widetilde{S}'_n"] \arrow[u, "\widetilde{T}'_n"'] \\
H_n \arrow[r, dashed]                                                                & H'_n                                                             
\end{tikzcd}
\]
Applying Theorem~\ref{theorem: factor through ker and coker} to the upper square shows $(\widetilde{T}'_n)^{-1} X_n \widetilde{T}_n$ represents the induced map on cycles, then applying it to the lower square shows $\widetilde{S}'_n (\widetilde{T}'_n)^{-1} X_n \widetilde{T}_n (\widetilde{S}_n)^{-1}$ represents the induced map on homology.
As before, the matrix products can be computed by reusing the row and column operations from the computations of $H_n$ and $H'_n$.

\newpage

\bibliographystyle{unsrt}
\bibliography{bibliography.bib}

\end{document}